\definecolor{myred}{rgb}{0.75,0,0}
\definecolor{mygreen}{rgb}{0,0.5,0}
\definecolor{myblue}{rgb}{0,0,0.65}
\newcommand{\nc}{\newcommand} \newcommand{\renc}{\renewcommand}
    \def\RM{{\mathbb{R}}}
    \def\ZM{{\mathbb{Z}}}
    \def\FC{{\mathcal{F}}}
    \def\GC{{\mathcal{G}}}
    \def\RC{{\mathcal{R}}}
\def\e{\varepsilon}
\def\XSS{{\mathscr{X}}}
\nc{\todo}[1]{ {\color{red}XXX #1 XXX}}
\def\to{\rightarrow}
\def\longto{\longrightarrow}
\def\onto{\twoheadrightarrow}
\nc{\triright}{\stackrel{[1]}{\to}}
\nc{\longtriright}{\stackrel{[1]}{\longto}}
\nc{\Hb}{H^\bullet}
\nc{\Br}{\mathcal{B}}
\nc{\HotRR}{{}_R\mathcal{K}_R}
\nc{\HotR}{\mathcal{K}_R}
\nc{\excise}[1]{}
\nc{\defect}{\text{df}}
\nc{\h}[1]{\underline{H}_{#1}}
\nc{\Ga}{\mathbb{G}_a} 
\nc{\Gm}{\mathbb{G}_m} 
\nc{\Perv}{{\mathbf{P}}}
\nc{\IH}{{\mathrm{IH}}}
\nc{\ic}{\mathbf{IC}}
\nc{\gl}{{\mathfrak{gl}}}
\renc{\sl}{{\mathfrak{sl}}}
\renc{\sp}{{\mathfrak{sp}}}
\renc{\Im}{\textrm{Im}}
\nc{\HBM}{H^{BM}}
\nc{\St}{\mathrm{St}}
\nc{\rot}{\mathrm{rot}}
\nc{\ext}{\mathrm{ext}}
\nc{\Tilt}{\mathrm{Tilt}}
\nc{\gen}{\mathrm{gen}}
\nc{\Graph}{\mathrm{Graph}}
\newcommand{\into}{\hookrightarrow}
\nc{\simto}{\stackrel{\sim}{\to}}
\nc{\simfrom}{\stackrel{\sim}{\leftarrow}}
\nc{\gbmod}{\mathrm{-gmod-}}
\nc{\gmod}{\mathrm{-gmod}}
\nc{\Parity}{\mathrm{Parity}}
\nc{\mult}{\mathrm{mult}}
\nc{\Hecke}{\textrm{H}}
\nc{\geom}{\mathrm{geom}}
\nc{\Soe}{\mathrm{Soe}}
\nc{\Abe}{\mathrm{Abe}}
\nc{\diag}{\mathrm{diag}}
\nc{\fin}{\textrm{finite}}
\nc{\reflect}{\RC}
\nc{\Chi}{\XSS}
\nc{\pt}{\mathrm{pt}}
\nc{\odd}{\textrm{odd}}
\nc{\even}{\textrm{even}}
\nc{\weights}{\textrm{weights}}
\renewcommand{\top}{\text{top}}
\renewcommand{\bot}{\text{bot}}
\DeclareMathOperator{\width}{width}
\DeclareMathOperator{\dist}{dist}
\DeclareMathOperator{\height}{ht}
\nc{\adhoc}{\textrm{adhoc}}
\nc{\fund}{\textrm{fund}}
\newcommand{\verteq}{\rotatebox{90}{$\,=$}}
\newtheorem{thm}{Theorem}[section]
\newtheorem{lem}[thm]{Lemma}
\newtheorem{prop}[thm]{Proposition}
\newtheorem{cor}[thm]{Corollary}
\theoremstyle{definition}
\theoremstyle{remark}
\newtheorem{remark}[thm]{Remark}
\newif\ifdraft\draftfalse
\newcommand{\change}[1]{\textcolor{blue}{#1}}
\newcommand{\change}[1]{#1}
\title[]{Drums of high width}
\author[]{Alex Davies}
\address{Google DeepMind.}
\email{adavies@google.com }
\author[]{Prateek Gupta}
\address{Google DeepMind.}
\email{prateek.gupta@exeter.ox.ac.uk}
\author[]{S\'ebastien Racani\`ere}
\address{Google DeepMind.}
\email{sracaniere@google.com }
\author[]{Grzegorz Swirszcz}
\address{Google DeepMind.}
\email{swirszcz@google.com}
\author[]{Adam Zsolt Wagner}
\address{Worcester Polytechnic Institute, USA.}
\email{zadam@wpi.edu}
\author[]{Theophane Weber}
\address{Google DeepMind.}
\email{theophane@google.com }
\author[]{Geordie Williamson}
\address{University of Sydney, Australia.}
\email{g.williamson@sydney.edu.au}
\thanks{GW is supported by ARC grant DP230102982.}
\begin{document}

\begin{abstract}
  We provide a family of $5$-dimensional prismatoids
  whose width grows linearly in
  the number of vertices. This provides a new infinite family of
  counter-examples to the Hirsch conjecture whose excess width grows
  linearly in the number of vertices, and answers a question of Matschke, Santos
and Weibel.
\end{abstract}

\maketitle




\section{Introduction} \label{sec:intro}

Let $P$ be a convex polytope. The graph of $P$ has nodes and edges
corresponding to the vertices and edges of $P$. It is a fundamental
unsolved problem to determine the diameter of this
graph: how many steps does one need in order to walk between any two
vertices, if one may only traverse edges.

This problem has deep connections
to the complexity of the simplex method in linear
programming. Indeed, lower bounds on the diameter of a polytope give
lower bounds on the complexity of the simplex method. Determining the
complexity of linear programming is one of Smale's 18 problems, and is
still unsolved.\footnote{See \cite[\S 1]{Santos} for some interesting
  history on the diameter  of polytopes and its connection to the
  complexity of linear   programming.} 

In 1957, Hirsch conjectured that the diameter of any convex polytope is
bounded by $n-d$, where $n$ is the number of facets and $d$ is the
dimension. This conjecture provides a simple linear bound for the diameter
of a polytope, and would show that the diameter cannot provide a useful
lower-bound for the complexity of the simplex method

In a breakthrough paper in 2011, Santos \cite{Santos} provided a
counter-example to the Hirsch conjecture. His counter-example is in
dimension 43 and has 86 facets, and the Hirsch bound fails by
1. Santos' counter-example re-opens the door to the possibility that
the diameter of polytopes might lead to interesting lower-bounds for the
complexity of the simplex method.

Motivated by the ``$d$-step theorem'' of Klee and Walkup
\cite{KleeWalkup}, Santos introduced the fundamental concept of a
\emph{drum}.\footnote{\emph{Drum} is our terminology, Santos called them
  \emph{prismatoids}} These are
polytopes with two parallel facets containing all vertices. A
remarkable theorem of Santos shows that interesting drums in small
dimension (e.g. 5), may be used to construct interesting polytopes in high
dimension (e.g. 43 in Santos' famous example).

Experience suggests that the difficulty of gaining human understanding
of polytopes grows exponentially in the dimension. This is one reason
the drum concept is so useful---it allows one to compress
high-dimensional difficulty into comparatively low dimension.
More generally, the
work of Santos and subsequent work of Maschke, Santos and
Weibel \cite{MSW} suggest that the study of drums may provide insights
into the difficult question of the diameter of general polytopes.

Despite considerable interest in this problem, there remain very few
examples of polytopes breaking the Hirsch bound. All known examples 
arise from interesting 5-dimensional drums. Such ``non-Hirsch
drums'' appear to have remarkable structure, and are extremely rare.
In \cite{Santos},
Santos provides examples with enormous numbers of facets to show that
the Hirsch conjecture can fail by any constant amount. More recently,
Maschke, Santos and Weibel \cite{MSW} constructed more examples,
showing that the excess width of drums (see below) can grow like the square root of the number of
facets. In this paper we provide a family of 5-dimensional drums whose excess
width grows linearly in the number of facets, answering a question
posed in \cite{MSW}. Up to a constant factor, this is best possible. We
are also able to uncover the mechanism leading to such examples.

\subsection{Main theorem} As is often the case in the study of
polytopes, it is useful to dualize. To a convex
polytope $P$ we may also associate its facet-ridge graph which has nodes
given by the facets (=codimension 1 faces) of $P$ and edges
corresponding to ridges (=codimension 2 faces) of $P$. The facet-ridge
graph of $P$ agrees with the (vertex-edge) graph of the dual polytope.
In this language the Hirsch conjecture states that the diameter of the
facet-ridge of a convex polytope is bounded by $n-d$ where $n$ is the
number of vertices of $P$ and $d$ is the dimension.

As discussed above, in this paper we consider \emph{drums}. These are
polytopes $D$ that posess two fixed parallel facets $D^-$ and $D^+$ (the \emph{drum skins}) which together contain all
vertices of $D$. (For example, the cube and octahedron are both drums,
with respect to any pair of opposite faces.) The width of a drum is
the distance between the two drum skins in the facet-ridge graph. (Of
course, the width of a drum provides a lower bound on the diameter of
its facet-ridge graph.)

Santos explained how to start with a $d$-dimensional drum with
$n$ vertices of width $\ge w$ and produce another drum with $2(n-d)$ vertices in
dimension $n-d$ of diameter $\ge w+n-2d$. Thus, the Hirsch conjecture
implies that drums in dimension $d$ have width at most $d$. Santos
constructs his counter-example starting from a 5-dimensional drum with
48 vertices of width $6 > 5$.

More generally, Maschke, Santos and Weibel \cite{MSW} suggested that the study of
the width of drums might provide insights on the diameter of general
polytopes. Thus, one is led to consider the function
\[
f_d(n) = \max_{\text{$d$-dimensional drums $D$ with $n$-vertices}}
\width(D) -d.
\]
We call this function the \emph{excess width}. The (disproved) Hirsch
conjecture would have implied that this function is identically zero.
All interesting examples so far involve 5-dimensional drums. The best results so
far are due \cite{MSW} and show that $f_5$ grows at least as fast as
the square root of $n$.

Here we produce a new family of 5-dimensional drums $D_k$ parametrized
by $k = 1, 2, \dots$. They have $16k + 24$ vertices. Our main theorem is:

\begin{thm} \label{thm:main}
$D_k$ has width $\ge 5+k$.
\end{thm}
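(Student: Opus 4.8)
The plan is to translate the width of $D_k$ into a combinatorial statement about superposed normal fans, following Santos \cite{Santos} and Matschke, Santos and Weibel \cite{MSW}, and then to prove the required lower bound by a monovariant. Write $D_k=\mathrm{conv}\big((Q_k^{+}\times\{1\})\cup(Q_k^{-}\times\{-1\})\big)$, with $Q_k^{\pm}\subset\mathbb{R}^{4}$ the two drum skins. By Santos's analysis of prismatoid width, paths in the facet--ridge graph of $D_k$ joining $D_k^{-}$ to $D_k^{+}$ are modelled by walks in the common refinement $\mathcal{R}_k:=\mathcal{N}(Q_k^{+})\wedge\mathcal{N}(Q_k^{-})$ of the two normal fans, regarded as a geodesic cell complex on $S^{3}$: such a walk runs between a cell arising from a ray of $\mathcal{N}(Q_k^{+})$ and one arising from a ray of $\mathcal{N}(Q_k^{-})$, and $\width(D_k)$ exceeds the minimal length of such a walk by a universal additive constant (the constant being $2$ --- for instance Santos's original non-Hirsch drum has width $6$, corresponding to a walk of length $4$). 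So the theorem reduces to showing that every such walk has length at least $k+3$.

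Next I would exploit the shape of the family. Each drum skin $Q_k^{\pm}$ is built as a wedge (one-point suspension) over a $3$-polytope $R_k^{\pm}$, so --- as in \cite{Santos,MSW} --- the normal-fan picture on $S^{3}$ descends to one on $S^{2}$: $\mathcal{R}_k$ is combinatorially the suspension of the superposition $\mathcal{B}_k$ of the two geodesic maps $\mathcal{A}_k^{\pm}$ cut out on $S^{2}$ by the normal fans of $R_k^{\pm}$, and any walk in $\mathcal{R}_k$ projects to a walk in $\mathcal{B}_k$ of no greater length, while the suspension directions are inert (they cannot be used to shortcut). It therefore suffices to prove: every walk in $\mathcal{B}_k$ from the distinguished $+$ cell to the distinguished $-$ cell crosses at least $k+3$ walls. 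In this planar picture (after stereographic projection) the $16k+24$ vertices of $D_k$ correspond to the $O(k)$ rays and crossing points of $\mathcal{A}_k^{+}\cup\mathcal{A}_k^{-}$, which the construction arranges into a long, controlled ``staircase'' corridor.

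The heart of the proof is a monovariant on the cells of $\mathcal{B}_k$. I would introduce a level function $\phi$ --- a generic linear height functional on $S^{2}$, or a bounded-variation winding number about the centre of the staircase, made explicit from the coordinates defining $D_k$ --- with two properties: (i) if two cells of $\mathcal{B}_k$ share a wall, their $\phi$-values differ by at most $1$; and (ii) $\phi$ of the $+$ cell and $\phi$ of the $-$ cell differ by at least $k+3$. Granting these, every walk between the two distinguished cells has length $\geq k+3$, and de-suspending and applying Santos's formula yields $\width(D_k)\geq 5+k$. Property (ii) is a direct computation from the construction. Property (i) is the crux and the main obstacle: one must check that the staircase is ``loose enough'' that no single wall of $\mathcal{B}_k$ spans two levels of $\phi$ --- a too-tight staircase creates a crossing point through which a short walk could drop several levels at once, which is exactly the inefficiency that confined earlier constructions to excess width of order $\sqrt{n}$. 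Identifying precisely which spacing of the arcs in $\mathcal{A}_k^{\pm}$ defeats this, while keeping the vertex count linear in $k$, is the ``mechanism'' the paper isolates, and it also pins down the growth rate up to the constant in $16k+24$. Along the way I would verify the genericity hypotheses implicit in the Santos reduction (no unexpected face coincidences in $D_k$); the matching upper bound $\width(D_k)=5+k$, readable off from explicit paths, is not needed for Theorem~\ref{thm:main}.
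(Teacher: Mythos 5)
Your proposal is a plan in the spirit of Santos and MSW, but it has two genuine gaps. First, the dimensional reduction you invoke is unsupported: you assert that each drum skin of $D_k$ is a wedge (one-point suspension) over a $3$-polytope, so that the pair of geodesic maps on $S^{3}$ descends to a superposition on $S^{2}$. Nothing in the construction of $D_k$ provides such a wedge structure --- the skins are genuinely $4$-dimensional ``double pancake'' polytopes, and no inert suspension direction is available --- so the whole $S^{2}$ staircase picture on which your monovariant lives has no justification. Second, and more importantly, the core of the argument is missing: you introduce a level function $\phi$ with the two properties (adjacent cells differ by at most $1$; the distinguished cells differ by at least $k+3$), but you neither construct $\phi$ from the coordinates nor verify either property, and you yourself flag property (i) as ``the crux and the main obstacle.'' That property is precisely the nontrivial combinatorial content of the theorem, so as written the proposal assumes what has to be proved.

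For comparison, the paper avoids any geodesic-map reduction and works directly with the pair embedding $\FC^{t}(D_k)\hookrightarrow\Delta(D_k^-)\times\Delta(D_k^+)$, which bounds distance in the trimmed facet--ridge graph by half the distance in the product of face graphs. The actual work consists of: (a) an explicit determination of all facets of the top skin up to symmetry (the families $B$, $C_i$, $E_i$); (b) a computation of the facet--vertex map by solving the associated linear programs, showing all $C_i,E_i$ map to the single ``pole'' $n^-$ and $B$ to $a_1^-$; (c) a proof that only the boundary edges $\{a_i^-,a_{i+1}^-\}$ occur in the image of the pair embedding; and (d) a distance estimate $\ge 2k+3$ in the sign-flip quotient graph $\GC$, which combined with the pair-embedding bound gives $\ell\ge k+3$ and hence width $\ge 5+k$. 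Steps (a)--(c) are the concrete substitute for your unproven property (i); without an argument at that level of detail (or an explicit $\phi$ with verified Lipschitz and separation properties), the proposal does not establish the theorem.
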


Thus our results prove that $f_5$ grows linearly in $n$. The question
of whether this is possible was raised in \cite{MSW}. Note also that
we hit (up to constants) the
bounds established by \cite{MSW} for 5-dimensional drums\footnote{Indeed, for any fixed constant $d$, the function $f_d(n)$ is at most linear in $n$.}.

To translate this result back to the world of the Hirsch conjecture,   Santos' Strong $d$-step theorem for spindles, which was mentioned above, implies the following:
\begin{cor} \label{cor:main_hirsch_translation}
    For any integer $k\geq 1$, there exists a polytope $P_k$ of
    dimension $d=16k + 19$ with $2d$ facets, that has diameter at
    least $  d+k $. That is, the Hirsch bound fails by $k$.
  \end{cor}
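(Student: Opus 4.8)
The plan is to combine Theorem~\ref{thm:main} with Santos's drum-to-drum construction \cite{Santos} (recalled above) and then to pass to the dual polytope; the whole argument is a short translation, with all the substance already contained in Theorem~\ref{thm:main}.

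First I would apply Santos's construction to the drum $D_k$. It takes a $d$-dimensional drum with $n$ vertices and width $\ge w$ and produces a drum of dimension $n-d$ with $2(n-d)$ vertices whose facet-ridge diameter is at least $w+n-2d$. Applying this with $d=5$, $n=16k+24$ and $w=5+k$---the last by Theorem~\ref{thm:main}---gives a drum $D'_k$ of dimension $16k+19$, with $2(16k+19)$ vertices, whose facet-ridge diameter is at least
\[
w + n - 2d = (5+k) + (16k+24) - 10 = 17k + 19 .
\]

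Then I would dualize. Set $P_k := (D'_k)^\ast$ and $d := 16k+19 = \dim P_k$. Since the vertices of $D'_k$ are the facets of $P_k$, the polytope $P_k$ has $2d$ facets; and since the facet-ridge graph of $D'_k$ is the (vertex-edge) graph of $P_k$, the diameter of $P_k$ is at least $17k+19 = d+k$. The Hirsch bound for $P_k$ is $(\#\text{facets}) - \dim = 2d-d = d$, so the diameter of $P_k$ exceeds it by at least $k$, which is the assertion of the corollary. (Equivalently, one could first dualize $D_k$ to a $5$-dimensional spindle of length $\ge 5+k$ with $16k+24$ facets and then invoke Santos's Strong $d$-step theorem for spindles directly.)

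I do not expect a genuine obstacle beyond Theorem~\ref{thm:main} itself. One simply has to carry out the arithmetic above and check the hypothesis $w>d$ needed for Santos's construction to output a non-Hirsch polytope; this holds since $5+k>5$ for every $k\ge 1$, and the excess $w-d=k$ is exactly what survives the construction. The only place where care is genuinely needed is the bookkeeping of the duality---vertices $\leftrightarrow$ facets, facet-ridge diameter $\leftrightarrow$ vertex-edge diameter, drum width $\leftrightarrow$ spindle length---so that the final diameter is measured against the correct facet count.
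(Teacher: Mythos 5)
Your proposal is correct and is essentially the paper's own argument: the paper gives no separate proof of the corollary, but derives it exactly as you do, by feeding Theorem~\ref{thm:main} (together with the vertex count $16k+24$) into Santos' strong $d$-step theorem for prismatoids/spindles and dualizing, and your arithmetic $(5+k)+(16k+24)-10 = 17k+19 = d+k$ with $d=16k+19$ matches the stated bound.
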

  
In \cite[\S 6]{Santos}, Santos introduces the \emph{Hirsch excess} of
a (non-Hirsch) polytope which is defined as
\[
\frac{\delta}{n-d} - 1
\]
where $\delta$ is its diameter, $n$ the number of facets, and $d$ the
dimension. It is a useful measure of ``how much'' a polytope violates
the Hirsch conjecture. One computes easily that the Hirsch excess of
the polytopes arising from our family of drums is
\[
\frac{k}{16k + 19}
  \]
Thus, as $k \to \infty$ the Hirsch excess of our family approaches
$1/16$, which is the largest excess known (to the best of our knowledge).
  

\subsection{Symmetric search} Our family of drums (given in \S 3)
might seem arbitrary at first. Here we give some idea of how we
discovered this family.

Let us first recall Santos' construction \cite{Santos} of his 5-dimensional drum of
width $6$, which he uses to produce the first counter-example to the
Hirsch conjecture. Let $\Gamma$ denote the finite group of orthogonal
transformations of $\RM^5$ generated by the matrices
\[
\left (  \begin{matrix}
  \pm 1 & 0 & 0 & 0 & 0 \\
  0 & \pm 1 & 0 & 0 & 0 \\
  0 & 0 & \pm 1 & 0 & 0 \\
  0 & 0 & 0 & \pm 1 & 0 \\
  0 & 0 & 0 & 0 & 1 \\
\end{matrix} \right )
\quad \text{and} \quad
\tau =\left (  \begin{matrix}
  0 & 0 & 0 & 1 & 0 \\
  0 & 0 & 1 & 0 & 0 \\
  1 & 0 & 0 & 0 & 0 \\
  0  & 1 & 0 &0  & 0 \\
  0 & 0 & 0 & 0 & -1 \\
\end{matrix} \right ).
\]
Now, start with the five vectors
\begin{equation} \label{santos vertices}
(18,0,0,0,1), (0,0,45,0,1), (15,15,0,0,1), (0,0,30,30,1),
(10,0,0,40,1)
\end{equation}
and consider all possible vectors obtained from one of these vectors by
application of $\Gamma$. One obtains in this way 48 vectors, all of
whose last coordinate is $\pm 1$. Hence, if $D$ denotes the convex hull of these
points then one obtains a drum with respect to the two faces $D^-$
(resp. $D^+$) where the last coordinate is $-1$ (resp. $+1$). Santos
proves that $D$ has width $6$.

It is striking here that a rather complicated drum $D$ is generated by
a rather small \emph{motif}, namely the list of vectors \eqref{santos vertices}. We
first tried random search over tuples of vectors whose entries are
given by vectors whose entries are integers between $0$ and
$50$.\footnote{More precisely, our search enforced some sparsity. We noticed
the vectors in Santos' example have lots of zeros, and enforced this
by first choosing a vanishing pattern (e.g. $(A, 0, 0, 0, 1)$ or $(A,
0, 0, B, 1)$) and then choosing entries for $A$, $B$ etc.} Our first
observation was that, although rare, we were able to find other
width 6 drums in this way, which are genuinely different to Santos'
construction. This is an illustration of an important principle in
search problems, namely that searching for symmetric solutions is
often better than brute-force (see e.g. \cite{sym}).

Our second observation is that sampling entries of greatly
differing scales was much more likely to produce counter-examples. For
example, the motif
\begin{equation}   \label{eq:vecs2} (0,0,3,3,1),   (98,0,1,0,1), (100, 0, 0, 0, 1), 
(75,75,0,0,1).
\end{equation}
generates a drum of width $6$. (In fact, this is the first element of
our family.) Note the differing scales here: the first two non-zero coordinates (e.g. $100,
98, 75$) are of the order of 100, whereas the second two non-zero
coordinates ($1$, $3$) are of the order of $1$. Random sampling with
the first two coordinates ``big'' and the second two ``small''
appeared significantly more likely to produce drums of width $6$.\footnote{Drums of this form have a
striking geometric form, which we refer to as a ``double pancake'', see
Remark \ref{rem:double pancake}.}

Our next step was to try to
take a motif generating a drum of width $k$, and find a new motif by
adding a single vector in order to produce a drum of width $k+1$. This
was performed by a combination of human and computer
searches. After considerable effort, we found a family of motifs which led to
drums of widths 8, 9, 10, all the way to 25. At this point we were confident
that the construction worked, the proof only came later.

Interestingly, we found several examples of motifs 
which could be modified by the addition of additional vertices to
produce drums of widths $6,7,8,\dots$, however at some point it
appeared impossible to add a single vector to increase the
width. After considerable searches, the only family which produces
widths $5+k$ for all $k$ was the example of this paper.

\begin{remark} \label{rem:constants}
This entire paper is simply the discussion of an interesting
family of polytopes. We have not attempted to make the description of
our polytopes as general as possible. We have simply found constants which
do the job. The interested reader is encouraged to try to modify our
constructions, or obtain constants which are the most general
possible.
\end{remark}

\begin{remark} As we alluded to above, work on this paper relied
  heavily on extensive computations. Although we do not rely on any of
  the computations for the results of this paper, the interested
  reader may (like us) wish to experiment. For this
  reason we have prepared a colab \cite{colab} which contains some basic functions
  to compute with the objects of this paper. Throughout the paper we
  include several remarks with pointers to computations and examples in the colab.
\end{remark}

\subsection{Idea of the proof} Each of our polytopes $D_k$ is 
complicated, and we are unable to get a full 
description of their combinatorial structure. In order to prove that
they have width $\ge 5 + k$ we exploit a strategy developed in
\cite{Santos, MSW}. Namely, we obtain an explicit combinatorial
description of the top and bottom drum skins (which are much simpler
$4d$ polytopes), and then try to glean as much as we can concerning
their interaction. As in \cite{Santos,MSW} we make heavy use of the
symmetry group $\Gamma$ to reduce computations.

A key tool developed in \cite{Santos} and extended in \cite{MSW} is
that one may compute the width of a $d$-dimensional drum in terms of a
topological data (a ``pair of geodesic maps'') in the
$S^{d-2}$-sphere. The pair of geodesic maps describes combinatorics of
the interaction of the top and bottom drum skins. Then \cite[\S 2.1]{MSW} explained how to distill
out of this pair of geodesic maps a bipartite graph (the ``incidence
pattern''), whose nodes are the facets of the top and bottom drum
skins. They prove that the drum provides a counter-example to the
Hirsch conjecture if and only if this graph is free of oriented
$2$-cycles \cite[Proposition 2.3]{MSW}.

We give a different point of view on the incidence pattern.
We explain in \S\ref{sec:fv} how one may compute it in terms of linear
programming problems associated to the top and bottom drum
skins. Roughly speaking, each facet of the top drum skins determines a
linear programming problem on the bottom drum skin, and
conversely. The solutions of these problems 
determines the incidence pattern. (We rename this map the
``facet-vertex map''.)

Considerations of the incidence pattern makes it very believable that
our drums are of width $\ge 5 + k$. However, this alone does not appear
enough to conclude the proof. We conclude by a careful study of the
interaction between ridges in the top drum skin and edges in the bottom drum skin.

\change{
  \subsection{A comment on terminology} In this
  work we call ``drum'' what Santos \cite{Santos} calls ``prismatoid''. The reader
  may wonder why we decided to deviate from accepted nomenclature. We
  find the notation $P$ (for a general \emph{P}olytope) and $D$ (for a
  \emph{D}rum)  natural and suggestive. We also find the terminology
  ``drum'' and ``drum skin'' evocative of the striking geometric structure.
}

\change{ 
\subsection{Acknowledgements} We would like to thank the
  referees for their comments, which led to an improvement of this work.}

\section{Drums, skins and widths}

This is the theoretical foundation of this paper. We review
some polytope basics and fix our notation. We then introduce the pair
embedding and facet-vertex map, and explain how they can be used to
bound the width of a drum from below.

\subsection{Conventions and notation for polytopes}

We work throughout with polytopes $P$ within a $d$-dimensional affine
space $\RM^d$. \emph{Polytope} means convex polytope.

We use \emph{functional} throughout to mean affine linear
functional. Recall that a \emph{face} of a polytope $P$ is a subset of
the form $\{ p \in P \; | \; f(p) = 0 \}$ for some functional $f$ which
is $\ge 0$ on $P$. The dimension of a face is the dimension of its
affine span.

By \emph{facet}, \emph{ridge}, \emph{edge} and \emph{vertex} we mean
face of codimension $1$, codimension $2$, dimension $1$ and
dimension $0$ \change{faces} respectively.

Given a polytope $P$ with vertices $v_1, \dots, v_m$ we will often
abuse notation and identify a face $F$ with the subset of vertices
which it contains. Thus, ``the edge $\{ v_1, v_2\}$" really means
``the unique edge which contains $v_1$ and $v_2$".

For any facet $F$ there is a functional $f_F$ which vanishes
on $F$ and is $\ge 0$ on $P$. This functional (which
is unique up to positive scalar) is the functional \emph{defining} $F$.

Given a polytope $P$, a facet $F$ and a point $p$ outside $P$ (i.e. $p
\notin P$) we say that $p$ is \emph{visible} from $F$ if $f_F(p) <
0$. Intuitively, $P$ is a piecewise linear (and opaque) planet, and we
are a tiny being standing on the facet $F$. Visible points are all
those points that we can see above the horizon of our planet.

Quite a few graphs come up in this paper. There is a conflict of
terminology: vertices are often used to refer to the nodes of a graph,
which might be e.g. facets of a polytope. In order to try to avoid
becoming horribly confused we exclusively use \emph{vertex} to refer
to a $0$-dimensional face of a polytope, and \emph{node} to refer to
the node of a graph.

\subsection{Conventions and notations for drums}

A \emph{drum} is a polytope $D$ with two distinguished parallel facets $D^-$ and
$D^+$ which together contain all vertices of $D$. We often call $D^-$ and $D^+$ the \emph{bottom} and \emph{top
  drum skins}. (Thus, what we call a drum, Santos \cite[Definition
2.5]{Santos} calls a \emph{prismatoid}. In Santos' language,
prismatoids are the polar duals of \emph{spindles}. We will not need
the language of spindles in this paper.)

 As pointed out in \cite[\S 2.2]{Santos}, the requirement that the
 drum skins be parallel is not particularly important. One could
 instead require them simply to be disjoint, in which case a \change{projective}
 transformation can always be chosen to make them parallel.

Let us emphasise that the top and bottom drum skins are part of the
data of a drum (i.e. a drum is a polytope $D$, together with two
distinguished facets $D^-$ and $D^+$). For us, it will be convenient
to always assume that our polytope is embedded in $\RM^d$ in such a
way that the bottom and top drum skins are ``at height $-1$ and $1$
respectively''. More precisely, if $z$ denotes the last coordinate in
$\RM^d$ we assume that
\[
D^- = D \cap \{ z = -1\} \quad \text{and} \quad D^+ = D \cap \{ z = 1\} .
\]

A drum is \emph{simplicial} if all facets other than the drum skins
are simplices. We make the following important assumption:
\begin{equation}
  \label{eq:simplicial_assumption}
\begin{array}{c}  \text{All drums considered in this paper are assumed simplicial.} \end{array}
\end{equation}

\begin{remark}
  One may understand simplicial polytopes are polytopes which are
  generic in their vertex description: any small
  peturbation of their vertices results in a combinatorially
  equivalent polytope. Similarly, one may understand simplicial drums
  as drums which are generic in their vertex description: any small
  peturbation of their vertices \emph{preserving the drum skins}
  results in a combinatorially equivalent drum. Note that a simplicial
  drum is never a simplicial polytope unless its drum skins are simplices.
\end{remark}


\subsection{The pair embedding} \label{sec:pair} In this section we provide a way to bound the
width of a drum from below in terms of certain graphs associated to
the top and bottom drum skins.

Consider a polytope $P$. We can associate two important graphs to $P$:
\begin{enumerate}
\item \emph{The facet-ridge graph $\FC(P)$:} Nodes are facets of $P$, and
  two facets $F$ and $F'$ are joined if they have a ridge in common.
\item \emph{The face graph $\Delta(P)$:} Nodes are faces of $P$ of any
  dimension. There is an edge between two nodes if they are
  incident and their dimension differs by $1$. The face graph
  is graded by dimension of the face. We include the empty face, which
  has dimension $-1$.
\end{enumerate}

Now consider a $d$-dimensional drum $D$ with bottom drum skin $D^-$ and top drum
skin $D^+$. By definition, the \emph{width} of $D$ is
\begin{equation}
\width(D) = \dist_{\FC(D)}(D^-, D^+).\label{eq:width1}
\end{equation}
(Here $\dist$ has the usual
  meaning in graph theory: $\dist(v,v) = 0$, $\dist(v,v') = 1$ if and
  only if $v \ne v'$ and $v$ and $v'$ are incident etc.)

It makes things a little easier below to ignore the top and bottom drum
skins, as they are of a very different character to the rest of the
facets of the drum. The following definition only makes sense for drums:
\begin{enumerate}
\item \emph{The trimmed facet-ridge graph $\FC^{t}(D)$:} The full
  subgraph of $\FC(D)$ consisting of facets $\ne D^-, D^+$.
\end{enumerate}

Because of our assumption \eqref{eq:simplicial_assumption} we can
grade $\FC^{t}(D)$ by dimension of the intersection of a facet with the
top drum skin (which can take values $0, 1, \dots, d-2$). We refer to
this grading as the \emph{height}, and denote it $\height F$. Because $D^-$
(resp. $D^+$) is incident to every facet at height $0$
(resp. $d-2$) we can rephrase the width in the trimmed
facet-ridge graph as follows:
\begin{equation}\label{eq:width2}
\width(D) = \min_{F, F', \atop \height F = 0, \height F' = d-2}
\dist_{\FC^{t}(D)}(F, F') + 2
\end{equation}

Any facet $F$ of $D$ intersects the top and bottom drum skins $D^-$
(resp. $D^+$) in faces $F_{\bot}$ (resp. $F_{\top}$) of dimension $a$
(resp. $b$) with $a + b = d-2$. The facet $F$ is determined uniquely
by $F_\bot$ and $F_\top$.
In
this way we obtain an injection (a priori of sets):
\begin{align*}
  \rho: \FC^{t}(D) & \into \Delta(D^-) \times \Delta(D^+) \\
  F &\mapsto (F_{\bot}, F_\top)
\end{align*}
This map will play a crucial role below. We call it the
\emph{pair embedding}.

Let us make the product $\Delta(D^-) \times \Delta(D^+)$ into a graph
via the (box) product: nodes are given by the Cartesian product of
the nodes of $\Delta(D^-)$ and $\Delta(D^+)$ and $(u_1, v_1)$ and
$(u_2, v_2)$ are connected by an edge if and only if either $u_1 =
u_2$ and $v_1, v_2$ are incident, or $u_1$ and $u_2$ are incident and
$v_1 = v_2$. The importance of the pair embedding is due to the following
proposition, which bounds distance in the trimmed facet ridge graph in
terms of the image under the pair embedding.

\begin{prop}\label{prop:bound_width} For any two faces $F, F' \in \FC^{t}(D)$ we have
  \[
\dist_{\FC^{t}(D)} (F, F') \ge \frac{1}{2} \dist_{\Delta(D^-) \times \Delta(D^+)} (
\rho(F), \rho(F')).
    \]
\end{prop}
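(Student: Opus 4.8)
The plan is to show that a single edge-step in the trimmed facet-ridge graph $\FC^t(D)$ translates into at most two edge-steps in the box-product graph $\Delta(D^-)\times\Delta(D^+)$; the bound for general pairs $F,F'$ then follows by concatenating along a shortest path and using the triangle inequality for graph distance. So it suffices to prove: if $F,F'\in\FC^t(D)$ share a ridge, then $\dist_{\Delta(D^-)\times\Delta(D^+)}(\rho(F),\rho(F'))\le 2$.

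First I would analyze what it means for two facets $F$ and $F'$ of the simplicial drum $D$ to share a ridge $R$. Since every facet meets each drum skin in a face and is determined by that pair, write $\rho(F)=(F_\bot,F_\top)$ and $\rho(F')=(F'_\bot,F'_\top)$. The ridge $R=F\cap F'$ is a codimension-$2$ face of $D$, so it too meets the top and bottom skins in faces $R_\bot\subseteq F_\bot\cap F'_\bot$ and $R_\top\subseteq F_\top\cap F'_\top$, with $\dim R_\bot+\dim R_\top=d-3$. Comparing with $\dim F_\bot+\dim F_\top=d-2$ and $\dim F'_\bot+\dim F'_\top=d-2$, and using that $R_\bot$ is a face of both $F_\bot$ and $F'_\bot$ (similarly on top), I expect exactly one of the following to hold: either $R_\bot=F_\bot=F'_\bot$ (so the facets agree on the bottom skin) and $R_\top$ has codimension $1$ in each of $F_\top,F'_\top$; or the symmetric situation with top and bottom swapped; or $R_\bot$ has codimension $1$ in each of $F_\bot,F'_\bot$ and $R_\top$ has codimension $1$ in each of $F_\top,F'_\top$. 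The key combinatorial input, which I would extract from the simpliciality assumption \eqref{eq:simplicial_assumption} together with the fact that $F,F'$ are simplices sharing a codimension-$1$ face, is that in the third case $F_\top=F'_\top$ is forced (a simplex ridge determines the opposite vertex, and the ``opposite vertex'' can only live on one skin). That collapses the third case into the first two, so in every case the facets agree on one skin and differ by one dimension on the other.

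Once that dichotomy is in hand, the distance bound is immediate from the definition of the box product. Say $F_\bot=F'_\bot$; then $\rho(F)$ and $\rho(F')$ share their first coordinate, and I need $\dist_{\Delta(D^+)}(F_\top,F'_\top)\le 2$. Both $R_\top$ is a common face of $F_\top$ and $F'_\top$ of codimension $1$ in each, so in the face graph $\Delta(D^+)$ we have the path $F_\top$ — $R_\top$ — $F'_\top$ of length $2$ (adjacent faces differing by one in dimension). Hence $\dist_{\Delta(D^-)\times\Delta(D^+)}(\rho(F),\rho(F'))\le 2$ via the path that keeps the first coordinate fixed. This gives the single-step claim; summing over a geodesic of length $n=\dist_{\FC^t(D)}(F,F')$ yields $\dist_{\Delta(D^-)\times\Delta(D^+)}(\rho(F),\rho(F'))\le 2n$, which is the assertion.

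The main obstacle I anticipate is the careful case analysis of how a shared ridge $R$ sits inside $F$ and $F'$ relative to the two skins — in particular ruling out the ``genuinely diagonal'' case where both coordinates change simultaneously. This is where simpliciality of $D$ is essential and must be used correctly: one needs that a facet $F$ (a simplex) together with a ridge $R\subset F$ pins down the single remaining vertex of $F$, and that this vertex lies on exactly one of $D^-,D^+$, so moving across the ridge $R$ to $F'$ can only alter the pair $(F_\bot,F_\top)$ in one coordinate. I would also want to double-check the degenerate possibilities at the extreme heights (a facet meeting a skin in a vertex, i.e. $\dim F_\bot=0$, where ``codimension-$1$ face'' means the empty face) to confirm the face-graph path of length $2$ still exists there, which it does since $\Delta$ includes the empty face by construction.
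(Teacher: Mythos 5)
Your overall strategy --- reduce to a single ridge-crossing step and show it costs at most two steps in the box product, then concatenate along a geodesic --- is exactly the paper's, but your case analysis contains a genuine error. You claim that simpliciality forces the two incident facets to agree on one skin (``in every case the facets agree on one skin and differ by one dimension on the other''), and your final distance argument only treats that situation. This is false. Since $F$ is a simplex, the ridge $R=F\cap F'$ is obtained from $F$ by deleting a single vertex $v$, and from $F'$ by deleting a single vertex $v'$; simpliciality tells you that each of $v,v'$ lies on exactly one skin, but nothing forces them to lie on the \emph{same} skin. When $v$ lies on the top skin and $v'$ on the bottom skin one gets the genuinely ``diagonal'' incidence: $F_\bot=R_\bot$ is a codimension-one face of $F'_\bot$, and $F'_\top=R_\top$ is a codimension-one face of $F_\top$, so the facets agree on \emph{neither} skin and their heights differ by one. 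Far from being excluded, these are precisely the height-changing steps that must occur on any path in $\FC^{t}(D)$ from a facet of height $0$ to one of height $d-2$, so no correct argument can rule them out. (Note also that the ``third case'' as you literally state it --- $R_\bot$ of codimension one in both $F_\bot,F'_\bot$ \emph{and} $R_\top$ of codimension one in both $F_\top,F'_\top$ --- is impossible by the dimension count $\dim R_\bot+\dim R_\top=d-3$ against $\dim F_\bot+\dim F_\top=d-2$; this is a sign the enumeration went astray, since the true remaining case is the mixed one just described.)

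Fortunately the missing case is harmless and the repair is one line: in the diagonal case $F_\bot$ and $F'_\bot$ are incident in $\Delta(D^-)$ (nested, with dimensions differing by one), and likewise $F_\top$ and $F'_\top$ in $\Delta(D^+)$, so $\rho(F)$ and $\rho(F')$ are at distance $2$ in the box product via one step in each coordinate. With this case added, your proof matches the paper's (whose first case, ``$F$ and $F'$ intersect $D^+$ in faces of different dimensions,'' is exactly this diagonal incidence); the two cases you did treat, and the concatenation step, are fine.
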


\begin{proof} Suppose that $F, F'$ are incident faces in $\FC^{t}(D)$. It
  is enough to show that the distance between $\rho(F)$ and $\rho(F')$
  in $\Delta(D^-) \times \Delta(D^+)$ is $\le 2$.

  For incident $F, F'$ there are three possibilities:
\begin{enumerate}
\item \emph{$F$ and $F'$ intersects $D^+$ (equivalently $D^-$) in faces of
  different dimensions.} In this case if
  \[
\rho(F) = (F_\bot, F_\top) \quad \text{and} \quad \rho(F') = (F'_\bot, F'_\top) 
\]
then $F_\bot$ and $F'_\bot$ (resp. $F_\top$, $F'_{\top}$) are incident
in $\Delta(D^-)$ (resp. $\Delta(D^+)$). Hence
\[
\dist_{\Delta(D^-) \times \Delta(D^+)} ( \rho(F), \rho(F')) = 2.
  \]
\item \emph{$F$ and $F'$ have the same intersection with $D^-$.} In this case if
  \[
\rho(F) = (F_\bot, F_\top) \quad \text{and} \quad \rho(F') = (F'_\bot, F'_\top) 
\]
then $F_\bot = F'_\bot$, and $F_\top$ and $F'_{\top}$ share a common
codimension 1 face, and hence are of distance $2$ in $\Delta(D^+)$.
\item \emph{$F$ and $F'$ have the same intersection with $D^+$.} In this case if
  \[
\rho(F) = (F_\bot, F_\top) \quad \text{and} \quad \rho(F') = (F'_\bot, F'_\top) 
\]
then $F_\top = F'_\top$, and $F_\bot$ and $F'_{\bot}$ share a common
codimension 1 face, and hence are of distance $2$ in $\Delta(D^-)$.
\end{enumerate}
This completes the proof.
\end{proof}

Proposition \ref{prop:bound_width} is not quite strong enough for our
needs. We need a slight strengthening, which is obtained by observing
that some facets of $D^-$ and $D^+$ never occur in the image of the
pair embedding. More precisely, consider the full subgraphs with nodes
\begin{gather*}
\Delta_D(D^+) = \{ F_\top \; | \; F \text{ facet of $D$} \} \subset \Delta(D^+),\\
\Delta_D(D^-) = \{ F_\bot \; | \; F \text{ facet of $D$} \}\subset \Delta(D^-).
\end{gather*}
In other words, the nodes of $\Delta_D(D^+)$ consists of all faces of
$D^+$ which occur as $F_\top$ for some facet $F$ of $D$, and similarly
for $\Delta_D(D^-)$.

Because the image of the pair embedding lands in the full subgraph
\[
\Delta_{D}(D^-) \times \Delta_{D}(D^+) \subset \Delta(D^-) \times \Delta(D^+) \]
we have:

\begin{prop} \label{prop:pair_bound}
   For any two faces $F, F' \in \FC^{t}(D)$ we have
  \[
\dist_{\FC^{t}(D)} (F, F') \ge \frac{1}{2} \dist_{\Delta_{D}(D^-) \times \Delta_{D}(D^+)} (
\rho(F), \rho(F')).
    \]
  \end{prop}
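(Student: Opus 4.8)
The plan is to re-run the proof of Proposition~\ref{prop:bound_width}, this time tracking where the intermediate faces live. As there, it suffices to show that whenever $F,F'$ are incident in $\FC^{t}(D)$ one has $\dist_{\Delta_D(D^-)\times\Delta_D(D^+)}(\rho(F),\rho(F'))\le 2$; the asserted inequality then follows by summing this bound along a geodesic from $F$ to $F'$ in $\FC^{t}(D)$, exactly as before. So fix incident $F,F'$, put $R=F\cap F'$, and recall the three cases of Proposition~\ref{prop:bound_width}.

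In case (1) --- $F$ and $F'$ meet $D^+$ in faces of different dimensions --- the length-two path exhibited in Proposition~\ref{prop:bound_width} passes through $(F_\bot,F'_\top)$ (or symmetrically $(F'_\bot,F_\top)$), and its two coordinates are the bottom and top faces of facets of $D$; so the whole path already lies in $\Delta_D(D^-)\times\Delta_D(D^+)$ and nothing is needed. The content is in cases (2) and (3). In case (2) we have $F_\bot=F'_\bot$, so $R_\bot=F_\bot\in\Delta_D(D^-)$ automatically, while the path of Proposition~\ref{prop:bound_width} routes through the node $(F_\bot,R_\top)$, where $R_\top=F_\top\cap F'_\top$ is the common facet of the two simplices $F_\top$ and $F'_\top$. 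Thus case (2) reduces to the \textbf{Claim} that $R_\top\in\Delta_D(D^+)$, i.e.\ that $R_\top=G\cap D^+$ for some facet $G\ne D^+$ of $D$; case (3) is the mirror statement $R_\bot\in\Delta_D(D^-)$, obtained by interchanging the two skins.

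To prove the Claim I would pass to the face figure $Q:=D/R_\top$ --- the (standard) polytope whose face lattice is the interval $[R_\top,D]$ in the face lattice of $D$ --- which has dimension $a+2$, where $a=\dim F_\bot$. Facets of $D$ containing $R_\top$ are exactly the facets of $Q$; the skin $D^+$ contributes a facet $[D^+]$ of $Q$, the skin $D^-$ contributes nothing since $R_\top\subseteq D^+$ is disjoint from $D^-$, and a facet $G$ of $D$ with $G\cap D^+=R_\top$ is precisely a facet of $Q$ disjoint from $[D^+]$. So the Claim becomes: \emph{$Q$ has a facet disjoint from $[D^+]$}. This is where the drum structure enters. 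Deleting a bottom vertex $w\in F_\bot$ from the simplex $F$ yields a ridge of $D$ that still contains $R_\top$, and whose second facet is a facet of $D$ through $R_\top$ different from $D^+,F,F'$; these are pairwise distinct as $w$ varies, so for $\dim F_\bot\ge 1$ the polytope $Q$ already has strictly more than $a+3$ facets, and in particular is not a simplex. One then has to argue that some facet of $Q$ actually misses $[D^+]$.

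The main obstacle is exactly this Claim --- the concluding step just mentioned, and above all the borderline case $\dim F_\bot=0$. There $F_\bot$ is a single vertex, the ridge obtained by deleting it lies inside $D^+$, so its second facet is $D^+$ rather than a new facet, and one must instead rule out directly that $D$ is simple along $R_\top$ (equivalently, that $Q$ is a simplex --- the one configuration in which the Claim fails): here again it should be the disjointness of $D^-$ and $D^+$ that forbids $D$ from ``closing up like a simplex'' around the top face $R_\top$. Everything else --- the reduction to incident $F,F'$, the bookkeeping in cases (1)--(3), and the final concatenation along a geodesic --- is routine and mirrors Proposition~\ref{prop:bound_width}.
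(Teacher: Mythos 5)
You have put your finger on the genuinely non-trivial point. Proposition \ref{prop:pair_bound} is strictly stronger than Proposition \ref{prop:bound_width}, because distances are now taken in the induced subgraph $\Delta_D(D^-)\times\Delta_D(D^+)$, so the only thing to verify is that incident $F,F'\in\FC^{t}(D)$ have $\dist\le 2$ \emph{inside} that subgraph; your case (1) is indeed automatic, and cases (2)--(3) hinge on whether the intermediate node $(F_\bot,R_\top)$ (resp.\ $(R_\bot,F_\top)$) lies in the subgraph. For comparison, the paper offers no argument at this point at all: it deduces the proposition from Proposition \ref{prop:bound_width} in a single sentence, observing only that the image of $\rho$ lies in $\Delta_D(D^-)\times\Delta_D(D^+)$, and never discusses the intermediate nodes. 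So your analysis is finer than what is printed; but, as you acknowledge, your proof is incomplete, and the gap is genuine.

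Worse, the Claim you reduce to is false at this level of generality, so the plan cannot be completed as written. Take $d=3$, $D^+$ a quadrilateral $a,b,c,d$ and $D^-$ a triangle $u,v,w$, positioned (push $v$ far out beyond the path $a,b,c$) so that the lateral facets are $\{a,b,v\},\{b,c,v\},\{c,v,w\},\{c,d,w\},\{d,w,u\},\{d,a,u\},\{a,u,v\}$. Then $F=\{a,b,v\}$ and $F'=\{b,c,v\}$ are incident with $F_\bot=F'_\bot=\{v\}$ and $R_\top=\{b\}$, yet the only facets of $D$ containing $b$ are $D^+,F,F'$: no facet meets $D^+$ exactly in $\{b\}$, i.e.\ $Q=D/\{b\}$ is a triangle and has no facet disjoint from $[D^+]$. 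This is exactly your borderline case $\dim F_\bot=0$, and it shows that case cannot be ruled out; your reduction is also slightly too rigid, since when the Claim fails the bound could still be rescued by a common \emph{coface} of $F_\top$ and $F'_\top$ lying in $\Delta_D(D^+)$ (in the example above, only the improper face $D^+$ is available, and in dimension $\ge 4$ even that rescue disappears because of the dimension gap, while a facet of $D^+$ containing both tops --- always a legal node by \S\ref{sec:fv} --- need not exist). So any correct completion must either produce such a substitute intermediate node from the drum structure, or enlarge the target graph so that the needed intermediate faces are present by fiat --- which is effectively what the paper does later when it works with $\widetilde{\GC}$, keeping \emph{all} faces of dimension $\ge 2$ rather than only those in the image of $\rho$. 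As it stands, neither your argument nor the paper's one-line derivation establishes the incident-pair bound in the subgraph, and the Claim you aim at is not the right statement to prove.
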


\subsection{The facet-vertex map} \label{sec:fv}
Consider the composition of the
pair embedding with the projection
\[
\FC^{t}(D) \to \Delta(D^-) \times \Delta(D^+) \onto \Delta(D^-).
\]
Given a face $F \in \Delta(D^-)$ it is difficult in general to decide
whether it is in the image of this map, and if it is, in how many
ways. (In the language of the previous section, deciding membership of $\Delta_D(D^+)$ and
$\Delta_D(D^-)$ is complicated in general.)

However the situation simplifies significantly when $F$ is a facet of
$D^-$:

\begin{prop}
  For any facet $F$ of $D^-$ there exists a unique vertex $x_F$ of
  $D^+$ such that $(F,\{x_F\})$ is in the image of $\rho$.
\end{prop}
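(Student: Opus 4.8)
The plan is to recognise that a facet $F$ of $D^-$ is, seen inside $D$, precisely a ridge of $D$, and then to invoke the diamond property of the face lattice of a polytope: every ridge is contained in exactly two facets. Both the existence and the uniqueness of $x_F$ fall out of this observation.

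First I would set things up. Since $D^-$ is a facet of $D$ and $F$ is a facet of $D^-$, the face $F$ has dimension $(d-1)-1=d-2$ in $D$, i.e.\ it is a ridge, and $F\subseteq D^-$. Hence exactly two facets of $D$ contain $F$, one being $D^-$; let $G$ be the other. For existence I claim $\rho(G)=(F,\{x_F\})$ for a suitable vertex $x_F$. One must first check $G\ne D^+$: the drum skins lie in the disjoint hyperplanes $\{z=-1\}$ and $\{z=1\}$, so $D^-\cap D^+=\emptyset$, whereas $\emptyset\ne F\subseteq G\cap D^-$. Thus $G\in\FC^{t}(D)$, so the pair embedding is defined on $G$ and, $G$ being neither drum skin, $\dim G_\bot+\dim G_\top=d-2$ with $\dim G_\top\ge 0$. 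Since $F\subseteq G_\bot$ and $\dim F=d-2$, we are forced to have $G_\bot=F$ and $\dim G_\top=0$; so $G_\top$ is a single vertex, which we name $x_F$. Hence $\rho(G)=(F,\{x_F\})$ is in the image of $\rho$.

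For uniqueness, suppose $\rho(G')=(F,\{x'\})$ with $G'\in\FC^{t}(D)$. Then $F=G'_\bot\subseteq G'$ and $G'\ne D^-$, so $G$ and $G'$ are two facets of $D$, both distinct from $D^-$ and both containing the ridge $F$. The diamond property then gives $G'=G$, so $x'=x_F$.

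I do not expect a real obstacle here: the argument is routine polytope combinatorics once one has identified ``facet of $D^-$'' with ``ridge of $D$''. The two steps that genuinely need care are (i) excluding the possibility $G=D^+$, where disjointness of the drum skins enters, and (ii) concluding that $G_\top$ is exactly one vertex and not a positive-dimensional face, where the identity $\dim G_\bot+\dim G_\top=d-2$, and hence the standing simpliciality assumption \eqref{eq:simplicial_assumption}, is used. It should also be possible to describe $x_F$ as the solution of a linear program on $D^+$ (extend the functional cutting out $F$ within $D^-$ to a functional on $\RM^d$ and lower it until it first touches $D^+$); uniqueness then reflects that this program has a unique optimal vertex, which is the point of view adopted in \S\ref{sec:fv}.
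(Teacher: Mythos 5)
Your proof is correct and takes essentially the same route as the paper: identify the facet $F$ of $D^-$ as a ridge of $D$, use that a ridge lies in exactly two facets of $D$ (one of which is $D^-$), get existence because simpliciality forces the other facet to meet $D^+$ in a single vertex, and get uniqueness from the same two-facets-per-ridge fact. The only cosmetic differences are that the paper checks that $F$ is a ridge by an explicit functional ($f+Ng$) where you cite the standard fact that a facet of a facet is a face, and that the paper deduces the single-vertex claim directly from the facet being a simplex rather than via the identity $\dim G_\bot+\dim G_\top=d-2$.
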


\begin{proof}
We first claim that any $F$ facet of $D^-$ is a ridge of the whole
drum $D$. Indeed, take any functional $f$ which is zero on $F$ and
$>0$ on $D^-$. If $g$ is a functional which is zero on $D^-$ and
$=1$ on $D^+$ then, for $N$ large enough, $f + Ng$ is $\ge 0$ on $D$
and zero exactly on $F$, which proves that $F$ is a ridge.

In any polytope there are exactly two facets incident to any ridge. In
the case of our ridge $F$, one such facet is $D^-$. The other facet
(call it $A$) intersects $D^+$ in a single point $x$ (by our simpliciality
assumption \eqref{eq:simplicial_assumption}). This proves
existence. To get \change{uniqueness} notice that any facet of $D$ which
intersects $D^-$ in $F$ is necessarily incident to $F$, and hence is
equal to $A$, by the statement opening this paragraph.
\end{proof}

We call the resulting maps
\begin{align*}
\phi^- : \text{facets of $D^-$} &\to \text{vertices of $D^+$} \\
\phi^+ : \text{facets of $D^+$} &\to \text{vertices of $D^-$}
\end{align*}
the \emph{facet-vertex maps}.

\begin{remark}
  These maps (in a slightly different notation) play a key role in the 1970s counter-example to Hirsch in
  the unbounded setting \cite{KleeWalkup}.
\end{remark}

\begin{remark}
  One can use the facet-vertex maps to construct a bipartite graph
  with nodes given by the facets of $D^+$ and $D^-$, and an edge from
\change{  $F \in D^+$ to $F' \in D^-$ (resp. $F'$ to $F$) if
  $\phi^+(F) \in F'$ (resp. $\phi^-(F') \in F$)}. It is then easy to see from
  Proposition \ref{prop:bound_width} that $D$ is non-Hirsch (i.e. of
  width $\ge d+1$) if and only if the resulting graph contains no
  oriented 2-cycles. This was first observed in
  \cite[Proposition 2.3]{MSW} (the authors call the above graph the
  ``incidence pattern''). In our opinion, this observation provides
  the easiest way to check  that Santos' example is of width $\ge 6$.
\end{remark}

\subsection{Determining the facet-vertex map} \label{sec:method}

There is a beautiful way to determine the facet-vertex maps. This is
the method that we will use in practice below.

In order to describe this method, let us ignore the last coordinates and regard the top and bottom
drum skins as embedded in the same affine space. Thus, if our drum
$D$ sits inside $\RM^d$, we regard $D^+$ and $D^-$ as sitting inside
the same $\RM^{d-1}$, via the affine embeddings
\begin{equation} \label{eq:embed}
\RM^{d-1} \times \{ 1 \} = \RM^{d-1} = \RM^{d-1} \times \{-1\}
  \end{equation}
(In other words, we ignore the last (``drum'') coordinate).

Now, for any facet $F$ of $D^+$ we can consider a defining
functional $f_F$. (Recall that this means that $f_F$ is $\ge 0$ on
$D^+$ and is $=0$ on $F$.) We claim:

\begin{lem}\label{lem:minimum} We have
\[
\phi^+(F) = v
\]
where $v$ is the vertex of $D^-$ where $f_F$ obtains its minimum value.
\end{lem}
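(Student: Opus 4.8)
The plan is to unwind the construction of $\phi^+(F)$ and match it against the minimum of the functional $f_F$ on $D^-$. Recall from the previous proposition that $\phi^+(F)$ is defined as follows: $F$ is a ridge of the whole drum $D$, it is incident to exactly two facets of $D$, one of which is $D^+$, and the other facet $A$ meets $D^-$ in a single vertex $x$; by definition $\phi^+(F) = x$. So I must show that this vertex $x$ is precisely the point of $D^-$ at which $f_F$ is minimized. Throughout I work in $\RM^d$ with the drum coordinate $z$, and I regard $f_F$ as a functional on $\RM^d$ that is constant in $z$ (it is pulled back from the functional on $\RM^{d-1}$ defining the facet $F$ of $D^+$ under the identification \eqref{eq:embed}); this is legitimate since $F \subset D^+ \subset \{z = 1\}$.

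First I would produce the defining functional of the facet $A$ explicitly. Let $g$ be the functional that is $0$ on $D^-$ and $=1$ on $D^+$ (so $g = \tfrac{1}{2}(z+1)$ up to the affine normalization). I claim that for $N$ large enough, $h_N := f_F + N g$ is $\ge 0$ on $D$, vanishes exactly on $F$. This is the same compactness argument as in the proof of the preceding proposition: $f_F \ge 0$ on $D^+$ and $g \ge 0$ on all of $D$ (since $D$ lies between the two skins), so $h_N \ge 0$ on $D^+$ for every $N \ge 0$; on the compact set of vertices of $D$ not lying on $D^+$ we have $g > 0$ strictly (they lie on $D^-$, where... wait—vertices lie on $D^-$ or $D^+$; those not on $D^+$ lie on $D^-$ where $g = 0$). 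Here I need to be a little more careful: I instead consider $h_N = f_F + N g$ and note that $f_F$ is bounded below on $D$ (compactness) and $g = 0$ exactly on $D^-$. So $h_N$ restricted to $D^-$ equals $f_F|_{D^-}$, independent of $N$, and $h_N$ is $\ge 0$ on $D^+$. For $N$ large, $h_N \ge 0$ on all of $D$, and the zero set of $h_N$ on $D$ is $F$ together with the set $\{p \in D^- : f_F(p) = 0\}$ — but for the argument it is cleaner to shift: replace $f_F$ by $f_F - m$ where $m := \min_{D^-} f_F$, so that $\tilde f := f_F - m$ is $\ge m' \ge 0$...

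Let me restructure: set $m = \min_{p \in D^-} f_F(p)$ and let $v$ be a vertex of $D^-$ achieving it (the minimum of a linear functional on a polytope is attained at a vertex; I will argue uniqueness of $v$ at the end). Put $\ell := f_F - m \cdot g - m$... actually the clean choice is $\ell := f_F + N g - (m)$ where we exploit that on $D^-$, $g = 0$, so $\ell|_{D^-} = f_F|_{D^-} - m \ge 0$ with equality exactly on the face $\{f_F = m\} \cap D^-$; and on $D^+$, $\ell = f_F + N - m \ge N - m$, which for $N > m$ is... not zero. So to get a functional vanishing on $F$ I should not add the constant to all of it. The correct statement is: $\ell := f_F + N g$ for $N$ large is $\ge 0$ on $D$ with zero locus equal to $F$ (using $f_F \ge 0$ on $D^+$ with equality on $F$, and $f_F \ge m$ somewhere negative possible only if $m < 0$ — if $m < 0$, $\ell = f_F + Ng$ on $D^-$ equals $f_F \ge m$ which is negative near $v$, contradiction). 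So in general I normalize $f_F$ at the outset so that $\min_{D^-} f_F \ge 0$; but $f_F$ is only defined up to adding a constant once we pull back to $\RM^{d-1}$... Here is the resolution: $f_F$ is a functional on $\RM^d$ defined up to positive scalar, vanishing on $F$, nonneg on $D^+$. Adding a multiple of $g$ changes it by something vanishing on $D^-$, hence does not change $f_F|_{D^-}$ up to the same additive normalization — no. I think the honest approach: consider $f_A$, the genuine defining functional of $A$ in $\RM^d$, normalized so $f_A|_F = 0$, $f_A \ge 0$ on $D$. Since $A$ and $D^+$ are the two facets through the ridge $F$, and $D^+$ has defining functional $\tfrac12(1-z)$ (vanishing on $z=1$), the space of functionals vanishing on $F$ is spanned by $f_A$ and $(1-z)$. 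In particular $f_F$ (any functional vanishing on $F$, nonneg on $D^+$, constant in $z$) lies in this span: $f_F = \alpha f_A + \beta(1-z)$. Restricting to $D^-$ (where $z = -1$): $f_F|_{D^-} = \alpha f_A|_{D^-} + 2\beta$. Now $f_A \ge 0$ on $D^-$ and vanishes exactly at $x$ (since $A \cap D^- = \{x\}$), so $f_F|_{D^-}$ is minimized exactly at $x$, provided $\alpha > 0$. That $\alpha > 0$: restrict to $D^+$, where $z = 1$ and $f_F \ge 0$, $f_A \ge 0$, $(1-z) = 0$, so $f_F|_{D^+} = \alpha f_A|_{D^+} \ge 0$; since $f_A|_{D^+} \not\equiv 0$ (as $A \ne D^+$, $A \cap D^+$ is a proper face of $D^+$) and $f_F|_{D^+} \ge 0$, we get $\alpha \ge 0$, and $\alpha = 0$ would force $f_F = \beta(1-z)$ to have the hyperplane $z=1$ as zero set, i.e. $F = D^+$, contradiction; so $\alpha > 0$.

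This gives $\phi^+(F) = x = \operatorname{argmin}_{D^-} f_F$, as claimed. Uniqueness of the minimizing vertex $v$: since $f_A \ge 0$ on $D^-$ with zero locus exactly the single point $x$ (this is the simpliciality-fed input from the preceding proposition: $A \cap D^-$ is a vertex), the minimum of $f_F|_{D^-} = \alpha f_A|_{D^-} + 2\beta$ is attained only at $x$, so $v = x$ is unique. The main obstacle — and the one subtlety worth spelling out carefully in the writeup — is pinning down that $f_F$ lies in the two-dimensional span of $f_A$ and the skin functional (i.e. that the only facets through the ridge $F$ are $A$ and $D^+$, which is the defining property of a ridge in any polytope) and then correctly tracking the signs of the coefficients $\alpha, \beta$ so that the conclusion is a \emph{minimum} (not a maximum); everything else is the routine compactness/linear-functional bookkeeping already used in the proof of the preceding proposition.
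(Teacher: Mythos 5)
Your final argument is correct and is essentially the paper's proof in a slightly different packaging: the paper restricts the defining functional of the facet $A$ (its $F'$) to the two skins and identifies the top restriction with $f_F$ up to positive scalar, while you write the pullback of $f_F$ as $\alpha f_A + \beta(1-z)$ in the two-dimensional space of functionals vanishing on the ridge and check $\alpha>0$ — the same comparison, reached by a dimension count instead of uniqueness of defining functionals. The exploratory false starts (the $f_F+Ng$ compactness attempt) should simply be cut from a writeup; the concluding paragraph stands on its own.
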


\begin{proof} Consider the unique facet $F' \ne D^+$ of $D$ which contains $F$, and
  let $f$ denote its defining functional. By definition, $F'$ has a
  unique vertex not belonging to $D^+$ and $\phi^+(F) = v$.  Let us write
  $f$ out in coordinates
\[
f_+ : (x_1, \dots, x_d) \mapsto a_0 + \sum_{i=1}^d a_ix_d.
\]

If we restrict $f$ to $\RM \times \{ 1 \}$ we get the functional
  \begin{equation} \label{eq:new func plus}
f_+ : (x_1, \dots, x_{d-1}) \mapsto a_0 + a_d +  \sum_{i=1}^{d-1} a_ix_d.
\end{equation}
Because $f$ is zero on $F'$ and $\ge 0$ on $D$, $f_+$ is zero on $F$ and $>
0$ on $D^+ - F$. In other words, $f_+$ is a defining functional for $F
\subset D^+$. Thus $f_+$ and $f_F$ agree, up to positive scalar multiple.

  If we restrict $f$ to $\RM \times \{ -1 \}$ we get the functional
  \begin{equation} \label{eq:new func}
f_- : (x_1, \dots, x_{d-1}) \mapsto a_0 - a_d +  \sum_{i=1}^{d-1} a_ix_d
    \end{equation}
  which is $\ge 0$ on
$D^-$ and obtains its unique minimum value on $D^-$ at $v$.\footnote{This
minimum value is of course $0$, but we won't need this.} Hence $f +
\lambda$ also obtains its minimum value on $D^-$ at $v$, for any $\lambda \in \RM$.

In particular, $f_+ = f_- + 2a_d$ obtains its minimum at $v$ and the
lemma is proved.\end{proof}

A more geometric way of formulating this is as follows. Consider first
$D^+$ and the fixed facet $F$. We can consider the family of parallel
hyperplanes $H_t$ given by $f_F = t$ for $t \in \RM$. For large values
of $t$, $H_t$ does not intersect our polytope. As we
increase $t$ we get a family of parallel hyperplanes that 
eventually intersects $D^+$ for small positive values of $t$. At $t = 0$, $H_t$ intersects $D^+$ in
$F$, and then $H_t$ does not intersect $D^+$ for any negative
values of $t$: 
\begin{equation*}\begin{array}{c}
\begin{tikzpicture}[xscale=1.2,yscale=.9]

\coordinate (A) at (-1.5, -.6);
\coordinate (B) at (1, -1);
\coordinate (C) at (1, 1);
\coordinate (D) at (-1, 1);
\coordinate (E) at (-0.5, 0.5);

\foreach \i in {-2, -1, 0, 1, 2} {
    \draw[dashed] ($(\i,-1.3)$) -- ($(\i,1.5)$);
}

\draw[fill=gray!30] (A) -- (B) -- (C) -- (D) -- cycle;

\node at (-2, 1.8) {$H_{3}$};
\node at (-1, 1.8) {$H_{2}$};
\node at (0, 1.8) {$H_{1}$};
\node at (1, 1.8) {$H_{0}$};
\node at (2, 1.8) {$H_{-1}$};

\node[above] at (0,0) {$D^+$};
\node[right] at (1,0) {$F$};


\draw[dashed] (A) -- (B);
\draw[dashed] (B) -- (C);
\draw[dashed] (C) -- (D);
\draw[dashed] (D) -- (A);

\fill (A) circle (1pt);
\fill (B) circle (1pt);
\fill (C) circle (1pt);
\fill (D) circle (1pt);
\end{tikzpicture}\end{array}\end{equation*}

Now let us keep our family of hyperplanes $H_t$, and instead focus on
$D^-$. For very negative values of $t$, $H_t$ does not intersect $D^-$. As we increase $t$ we get a family of hyperplanes
getting closer and closer to $D^-$. Our simplicial assumption
guarantees that the first time the hyperplanes $H^t$ intersect $D^-$,
the intersection will be in a single vertex $\{ v \}$:
\begin{equation*}\begin{array}{c}
                   \begin{tikzpicture}[xscale=1.2,yscale=.9]
                     
\coordinate (A) at (.5, .8);
\coordinate (B) at (-1.8, 1);
\coordinate (C) at (-1, -1);
\coordinate (D) at (0, -.6);

\foreach \i in {-2, -1, 0, 1, 2} {
    \draw[dashed] ($(\i,-1.3)$) -- ($(\i,1.5)$);
}

\draw[fill=gray!30] (A) -- (B) -- (C) -- (D) -- cycle;

\node at (-2, 1.8) {$H_{3}$};
\node at (-1, 1.8) {$H_{2}$};
\node at (0, 1.8) {$H_{1}$};
\node at (1, 1.8) {$H_{0}$};
\node at (2, 1.8) {$H_{-1}$};

\node[above] at (-.5,0) {$D^-$};
\node[right] at (A) {$v$};


\draw[dashed] (A) -- (B);
\draw[dashed] (B) -- (C);
\draw[dashed] (C) -- (D);
\draw[dashed] (D) -- (A);

\fill (A) circle (1pt);
\fill (B) circle (1pt);
\fill (C) circle (1pt);
\fill (D) circle (1pt);
\end{tikzpicture}\end{array}\end{equation*}
Then $v$ is the image of our facet-vertex map:
\[
\phi^+(F) = v.
  \]

  \begin{remark}
    Another way of thinking about this is the following: All facets in
    $D^+$ determine linear 
    programming problems on $D^-$ whose solutions determine the
    facet-vertex map in one direction. Similarly, all facets of $D^-$
    determine linear programming problems on $D^+$ whose solution
    determine the facet-vertex map in the other direction. This
    formulation of the two facet-vertex maps as ``paired linear
    programs'' seems attractive.
  \end{remark}

  \subsection{Determining the pair embedding} \label{sec:method pair}
As in the previous section, we regard $D^+$ and $D^-$ as sitting
inside the same $\RM^{d-1}$ via the identifications (of affine spaces)
\begin{equation} \label{eq:id7}
\RM^{d-1} \times \{ 1 \} = \RM^{d-1} = \RM^{d-1} \times \{ -1 \}.
\end{equation}

Suppose given a facet $F^-$ of $D^-$ of dimension
$d^-$ and a facet $F^+$ of $D^+$ of dimension $d^+$. We are interested
in determining whether $(F^-, F^+)$ is in the
image of the pair embedding, and thus corresponds to a facet $F$ of
$D$. We have already seen that for this to be the case we must have
\begin{equation} \label{eq:dim bound}
d^+ + d^- = d-2.
\end{equation}
The following provides an answer:

\begin{lem} \label{lem:pair prog} 
  A pair $(F^-, F^+)$ as above occurs in the image of the
  pair embedding if and only if there exists a functional $f$ on
  $\RM^{d-1}$ whose minima on $D^-$ (resp. $D^+)$ is equal to $F^-$
  (resp. $F^+$).
\end{lem}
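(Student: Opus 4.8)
\emph{Proof proposal.} The plan is to prove both implications by toggling between a functional that defines a facet of $D$ on $\RM^d$ and the two affine functionals it induces on the drum skins under the identification \eqref{eq:id7}. Writing a functional on $\RM^d$ in coordinates as $g(x,z) = \ell(x) + cz + e$ (with $x\in\RM^{d-1}$, $\ell$ linear, $c,e\in\RM$), the crucial observation is that its two restrictions $x \mapsto \ell(x)+c+e$ and $x\mapsto \ell(x)-c+e$ to $\RM^{d-1}\times\{1\}$ and $\RM^{d-1}\times\{-1\}$ have the \emph{same} linear part $\ell$, and hence are minimised over any subset of $\RM^{d-1}$ on exactly the same face. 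This is the computation already used in the proof of Lemma \ref{lem:minimum}.

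For the ``only if'' direction I would start from a facet $F$ of $D$ (neither drum skin) with $\rho(F)=(F^-,F^+)$, together with a functional $g$ defining it, so $g\ge 0$ on $D$ and $g$ vanishes on $D$ exactly along $F$. Restricting $g$ to $D^+=D\cap\{z=1\}$ exhibits $f(x):=\ell(x)$ as, up to an additive constant, nonnegative on $D^+$ and vanishing there exactly on $F\cap D^+=F^+$, so $f$ is minimised over $D^+$ on $F^+$; restricting to $D^-$ shows likewise that $f$ is minimised over $D^-$ on $F^-$. Thus $f$ is the desired functional, and this direction is immediate.

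For the converse I would take $f$ on $\RM^{d-1}$ with minimum value $m^-$ over $D^-$ attained on $F^-$ and minimum value $m^+$ over $D^+$ attained on $F^+$, and set $g(x,z):=f(x)-\tfrac{m^++m^-}{2}+\tfrac{m^--m^+}{2}z$, so that $g(x,1)=f(x)-m^+$ and $g(x,-1)=f(x)-m^-$. Every vertex of $D$ lies on $D^-$ or on $D^+$; on $D^-$ one has $g=f-m^-\ge 0$ with equality exactly on $F^-$, and on $D^+$ one has $g=f-m^+\ge 0$ with equality exactly on $F^+$. Hence $g\ge 0$ on all vertices, so $g\ge 0$ on $D$ by convexity, and $g$ is non-constant (its linear part is that of $f$, which is nonzero because $F^-$ is a \emph{proper} face of $D^-$, as $d^-\le d-2<\dim D^-$ by \eqref{eq:dim bound}). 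Therefore $G:=\{p\in D : g(p)=0\}$ is a nonempty proper face of $D$ whose vertex set is exactly the vertices of $F^-$ together with the vertices of $F^+$.

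The remaining task, and the step I expect to be the main obstacle, is to see that $G$ is actually a \emph{facet} of $D$ rather than a face of smaller dimension (which it could a priori be, if the affine spans of $F^-$ and $F^+$ were in special position). Here I would argue by vertex count: since $G$ has vertices at both heights $z=1$ and $z=-1$ it lies in no drum skin, hence is contained in some non-skin facet $F$ of $D$, which by \eqref{eq:simplicial_assumption} is a $(d-1)$-simplex with exactly $d$ vertices; on the other hand a $k$-dimensional face of a polytope has at least $k+1$ vertices, so $F^-$ and $F^+$ together contribute at least $(d^-+1)+(d^++1)=d$ vertices by \eqref{eq:dim bound}. A face of a $(d-1)$-simplex with at least $d$ vertices is the whole simplex, so $G=F$; then $F\cap D^-=G\cap D^-=F^-$ and $F\cap D^+=F^+$, i.e. $\rho(F)=(F^-,F^+)$, completing the proof. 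This vertex count is precisely where the simpliciality assumption \eqref{eq:simplicial_assumption} and the dimension bound \eqref{eq:dim bound} genuinely enter the argument.
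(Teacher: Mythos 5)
Your proposal is correct and takes essentially the same route as the paper: the ``only if'' direction restricts a defining functional of the facet to the two skins, and the ``if'' direction lifts $f$ to the affine functional on $\RM^d$ vanishing on $F^-\cup F^+$ and nonnegative on $D$ (your explicit $g$ is exactly the paper's $\widetilde{f}$). The only difference is the last step: you establish that the zero-set face is a facet by a vertex count invoking \eqref{eq:simplicial_assumption}, whereas the paper gets it directly from \eqref{eq:dim bound}, since $F^-$ and $F^+$ lie in the disjoint parallel hyperplanes $z=-1$ and $z=+1$, so the face containing both has dimension at least $d^-+d^++1=d-1$ regardless of any ``special position'' and without using simpliciality.
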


\begin{proof}
First suppose that such a functional $f$ exists. The space of all
functionals on $\RM^d$ which agree with $f$ on $\RM^{d-1} \times \{ -1
\}$ is of dimension $1$, and within this space we take the unique
functional $\widetilde{f}$ which has the same value on $F^-$ and $F^+$. By adding a
constant, we can assume that $\widetilde{f}$ takes the value zero on
$F^-$ and $F^+$. Now $\widetilde{f}$ is $\ge 0$ on $D^-$ and $D^+$ and
hence is $\ge 0$ on the whole drum $D$. The intersection of $D$ with
the set $\widetilde{f} = 0$ is a facet $F$ containing $F^-$ and $F^+$, by
\eqref{eq:dim bound}. Thus $(F^-, F^+) = (F_\bot, F_\top)$ as claimed.

On the other hand, consider a facet $F$ of $D$ which is not equal to
$D^-$ or $D^+$, and let $F^- = F \cap D^-$ and $F^+ = F \cap
D^+$. Let $f_D$ be a defining functional for $F$ and let $f^-$ (resp. $f^+)$ denote its restriction to $\RM^{d-1}
\times \{ -1\}$ (resp. $\RM^{d-1} \times \{ 1 \})$. Under the identifications
\eqref{eq:id7}, we have $f^+ = f^- + \gamma$ for some constant
$\gamma$. Thus the minimum of $f^-$ on $D^-$ (resp. $D^+)$
consists of $F^-$ (resp. $F^+$). In particular, $f^-$ is our desired
functional.
\end{proof}

This lemma has the following corollary, which will prove useful
below:

\begin{cor} \label{eq:ridges}
  Consider two facets $F_1, F_2$ of $D^+$ which intersect in
  a ridge $R$. If $F_1$ and $F_2$ map to the same vertex of $D^-$ under the
  facet-vertex map, then the ridge $R$ is not in the image of the pair embedding.
\end{cor}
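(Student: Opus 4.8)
The plan is to prove the contrapositive via Lemma~\ref{lem:pair prog}. Suppose, for contradiction, that $R = F_1 \cap F_2$ \emph{is} in the image of the pair embedding. Then $R$ corresponds to a facet $F$ of the drum $D$, and under the pair embedding $\rho(F) = (R_\bot, R_\top)$ where $R_\top = R$ (since $R \subset D^+$ is already a face of $D^+$ of the correct dimension, and $R$ being a ridge of $D^+$ means $\dim R = d-3$, forcing $\dim R_\bot = d-2-(d-3) = 1$, so $R_\bot$ is an edge of $D^-$). By Lemma~\ref{lem:pair prog}, there is a functional $f$ on $\RM^{d-1}$ whose minimum locus on $D^+$ is exactly $R$ and whose minimum locus on $D^-$ is exactly the edge $R_\bot$.

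Next I would extract the contradiction from the hypothesis that $\phi^-$ (or rather $\phi^+$, applied to $F_1$ and $F_2$) sends both $F_1$ and $F_2$ to the same vertex $v$ of $D^-$. By Lemma~\ref{lem:minimum}, $\phi^+(F_i) = v$ means that $v$ is the vertex of $D^-$ at which the defining functional $f_{F_i}$ of $F_i \subset D^+$ attains its minimum on $D^-$. The key observation is that since $R = F_1 \cap F_2$ is a ridge of $D^+$, any functional vanishing on $R$ and nonnegative on $D^+$ — in particular, the functional $f$ from the previous paragraph, which has minimum locus exactly $R$ on $D^+$ — lies in the cone spanned by $f_{F_1}$ and $f_{F_2}$: indeed $f = \lambda_1 f_{F_1} + \lambda_2 f_{F_2}$ with $\lambda_1, \lambda_2 > 0$ (strictly positive because the minimum locus is $R$ itself, not a larger face containing only one of $F_1, F_2$). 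Then on $D^-$, the minimum of $f = \lambda_1 f_{F_1} + \lambda_2 f_{F_2}$ is attained at any point where \emph{both} $f_{F_1}$ and $f_{F_2}$ attain their respective minima; since both attain their minima at $v$, the functional $f$ attains its minimum on $D^-$ at $v$. Hence $v$ lies in the minimum locus $R_\bot$ of $f$ on $D^-$.

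Now I would push this a little further to get that the minimum locus of $f$ on $D^-$ contains more than just an edge, contradicting $R_\bot$ being an edge. The cleanest route: since $v$ minimizes each of $f_{F_1}$, $f_{F_2}$ individually on $D^-$, the minimum value of $f_{F_i}$ on $D^-$ is $c_i := f_{F_i}(v)$, and the minimum locus of $f_{F_i}$ on $D^-$ is the face $G_i := \{ p \in D^- : f_{F_i}(p) = c_i \}$, which contains $v$. Then the minimum locus of $f = \lambda_1 f_{F_1} + \lambda_2 f_{F_2}$ on $D^-$ is exactly $G_1 \cap G_2$. For this to equal the edge $R_\bot$, we would need $G_1 \cap G_2$ to be $1$-dimensional. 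But here I would invoke the structure: $G_1$ and $G_2$ are faces of the simplicial polytope $D^-$ both containing $v$, and $G_1 \cap G_2 \supseteq \{v\}$ with $\dim(G_1 \cap G_2) = 1$ is not yet absurd — so the \emph{real} argument must instead rule out that $f$ arises this way. The honest fix is: the face $R_\bot$ of $D^-$ is an edge containing $v$; let $v'$ be its other endpoint. Since $R_\bot$ is the minimum locus of $f$ on $D^-$, we have $f(v') = f(v)$, hence $f_{F_1}(v') + (\lambda_2/\lambda_1) f_{F_2}(v') = f_{F_1}(v) + (\lambda_2/\lambda_1) f_{F_2}(v)$; combined with $f_{F_i}(v') \ge f_{F_i}(v)$ for $i = 1,2$ (as $v$ is the minimum of each $f_{F_i}$) and $\lambda_i > 0$, we force $f_{F_1}(v') = f_{F_1}(v)$ and $f_{F_2}(v') = f_{F_2}(v)$. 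But $f_{F_1}$ attains its minimum on $D^-$ at the \emph{unique} vertex $v$ — this is exactly the content of the facet-vertex map being well-defined (Lemma~\ref{lem:minimum} together with the simpliciality assumption~\eqref{eq:simplicial_assumption}, which guarantees the minimum is attained at a single vertex). Hence $v' = v$, contradicting that $R_\bot$ is an edge (a $1$-dimensional face, so $v' \ne v$). This is the contradiction, and the proof is complete.

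The main obstacle, as the discussion above shows, is pinning down precisely \emph{why} the functional $f$ realizing $R$ as the minimum locus on $D^+$ must lie strictly inside the cone spanned by $f_{F_1}$ and $f_{F_2}$, and then transferring the uniqueness of the minimizing vertex (which is really the well-definedness of $\phi^-$) over to $D^-$. Both points rest squarely on the simpliciality assumption~\eqref{eq:simplicial_assumption}: simpliciality of $D^+$ away from the drum skins is what makes $R = F_1 \cap F_2$ a genuine ridge with exactly two facets above it, so the normal cone of $R$ in $D^+$ is the two-dimensional cone spanned by the two facet normals; and simpliciality is what forces the minimum of a generic (facet-defining) functional on $D^-$ to be a single vertex. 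I would make sure to state these two uses cleanly rather than leaving them implicit.
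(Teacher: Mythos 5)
Your proposal is correct and is essentially the paper's own argument: any functional whose minimum locus on $D^+$ is exactly $R$ is (after subtracting its minimum value, a normalization you gloss over but which changes nothing) a positive combination $\lambda_1 f_{F_1}+\lambda_2 f_{F_2}$, and since each $f_{F_i}$ attains its minimum on $D^-$ uniquely at the common vertex $v$, no such functional can have an edge of $D^-$ as its minimum locus, so Lemma~\ref{lem:pair prog} rules $R$ out of the image of the pair embedding. Your detour through the second endpoint $v'$ of $R_\bot$ is just a more laborious way of using the uniqueness of the minimizing vertex from Lemma~\ref{lem:minimum}, which the paper's proof invokes directly, but it is sound.
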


\begin{proof}
  Consider defining functionals $f_1$ and $f_2$ for $F_1$ and
  $F_2$. Any positive linear combination $\alpha f_1 + \beta f_2$ with
  $\alpha, \beta > 0$ defines a functional vanishing on $R$, and all
  functionals vanishing on $R$ and $\ge 0$ on $D^+$ are of this
  form. Now, observe that the minimum of $\alpha f_1 + \beta f_2$ on
  $D^-$ is always $v$ for any $\alpha, \beta > 0$. Thus, there is no
  functional $f$ which is zero on $R$, $\ge 0$ on $D^+$ and has
  minimum on $D^-$ consisting of anything but $\{ v \}$. We conclude
  by the previous lemma.
\end{proof}

\section{The $+k$ family}\label{sec:construction}

In this section we introduce our $+k$ family of drums, and study
their geometry. First we give the definition, then we spend
considerable time getting an explicit description of the top and
bottom drum skins. We then compute the facet-vertex map, and hence
obtain some basic knowledge of the pair embedding. We then use this to
establish that they are of width $\ge 5 + k$.

\subsection{Definition} Fix $k \ge 1$. We consider the following points in $\RM^5$:
\begin{align*}
  m_{\pm,\pm} &= (0,0,\pm 3,\pm 3,1) \\
  p_{\pm} &= (98,0,\pm 1,0,1) \\
  a_1 &= (100, 0, 0, 0, 1) \\
  a_2 &= (x_2,y_2,0,0,1) \\
  \vdots & \qquad \vdots \\
     a_{k} &= (x_{k},y_{k},0,0,1) \\       
  a_{k+1} &= (75,75,0,0,1) 
\end{align*}
We denote the set of these points $\{ m_{\pm\pm}, p_\pm,
  a_i \}$. 
  \begin{remark}
    \begin{enumerate}
    \item We will be more precise about the $a_i$ in a moment.
    \item Really $m_{\pm\pm}$ is shorthand for 4 points. For example,
      \[ m_{+-} = (0,0,+3,-3,1). \] Similarly, $p_{\pm}$ is shorthand
      for 2 points, for example
      \[p_+ = (98,0,1,0,1). \]
    \end{enumerate}
  \end{remark}

In order to define our drum, we use a group of symmetries to enlarge our
set of points. Let
\[
\e_i = \text{sign change at $i^{th}$ coordinate}.
\]
So for example, $\e_2 a_{k+1} = (75, -75, 0, 0, 1)$. Let $\tau'$
denote the permutation of the coordinates induced by
$(3,4,2,1,5)$. Thus, for example, $\tau' p_- = (0,-1,98,0,1)$. Now define
\[
\tau = \e_5 \tau'.
\]
Thus, $\tau$ permutes the first 4 coordinates and flips the sign of
the last coordinate. \change{Note that $\tau^2$ is the permutation of
  the coordinates induced by $(2,1,4,3)$.} Finally, consider the group:
\[
\Gamma = \langle \tau , \e_i \; | \; 1 \le i \le 4 \rangle.
\]
\change{Note that $\langle \e_i \; | \; 1 \le i \le 4 \rangle$ is a
  normal subgroup of $\Gamma$ with quotient isomorphic to $\ZM/4\ZM$. In
  particular
  \begin{equation}\label{orderofgamma}
    |\Gamma| = 64.
    \end{equation}.}

We now define:
\[
D_k = \text{convex hull of } \{ g(v) \; | \;  v \in \{ m_{\pm\pm}, p_\pm,
  a_i \} \text{ and } g \in \Gamma \}
  \]
By construction, $D_k$ is invariant under $\Gamma$. We will use this
extensively below to simplify computations.

Also note that $\tau$ exchanges top and bottom drum skins, and thus
its square
\[
\sigma = \tau^2
\]
preserves the top and bottom drum skins. In particular, top and bottom
drum skins are invariant under the subgroup
\[
\Gamma^+ = \langle \sigma, \e_i \; | \; 1 \le i \le 4 \rangle \subset \Gamma.
\]
In fact, it is easy to see that $\Gamma^+$ is precisely the subgroup of
$\Gamma$ which stabilizes the top and bottom drum skins setwise.

\begin{remark}
  The idea to consider drums which are symmetric under sign flips and
  a permutation is due to Santos \cite{Santos}. His counter-example to
  the Hirsch conjecture is of this form.  Note that, up to permutation of any of the first four coordinates
  coordinates we could replace $(3,4,2,1)$ by any conjugate in
  $S_4$. Thus the important feature of $\tau$ is that it is an
  $4$-cycle.
\end{remark}

\begin{remark}
  Our permutation is determined by \verb|MAIN_SYMMETRY| in the colab \cite{colab},
  up to inversion and conversion to  python indexing: $1 \mapsto 0, 2
  \mapsto 1, \dots$. The transformation $\tau$ is given by the
  function \verb|permute_axes|.
\end{remark}

\subsection{Critical assumptions} \label{sec:critical}
We now make explicit our assumptions
on the points $a_1, a_2, \dots, a_{k+1}$. In picturing the points $\{ m_{\pm\pm}, p_\pm,
a_i \}$ it is useful to ignore the last coordinate (which is always 1)
and project to the first two coordinates. When we do so, we get a
picture as in Figure \ref{fig:points}. Note that all points in this
figure are uniquely determined by their images in the projection with
the exception of the image of $m_{++}$ (which has the four points
$m_{\pm\pm}$ in its preimage) and the image of $p_+$ (which has the
two points $p_{\pm}$ its preimage).

  \begin{figure} 
    \caption{Our points $m_{\pm\pm}$, $p_{\pm}$ and
       $a_i$ for $k = 5$. (Not to scale!)}
       \label{fig:points}
 \centering 
\begin{tikzpicture}
 [acteur/.style={circle, fill=black,thick, inner sep=2pt, minimum size=0.2cm}] 
\node (m) at (-10,0) [acteur][label=$m_{\pm\pm}$]{};
\node (p) at ( -1,0)[acteur][label={$p_{\pm}$}]{}; 
\node (a1) at ( 0,0) [acteur][label=$a_1$]{}; 
\node (a6) at ( -2.5,7.5) [acteur][label=$a_6$]{}; 
\node (a5) at (106.5:6) [acteur][label={$a_5$}]{}; 
\node (a4) at (105:4) [acteur][label={$a_4$}]{}; 
\node (a3) at (103:2) [acteur][label={$a_3$}]{}; 
\node (a2) at (101:1) [acteur][label={$a_2$}]{};
\node[below] (z) at (m) {\tiny $(0,0)$};
\node[below] (a1') at (a1) {\tiny $(100,0)$};
\node[below] (a6') at (a6) {\tiny $(75.75)$};
\node[below] (p') at (p) {\tiny $(98,0)$};
\draw[gray!50!white] (a1) -- (m) -- (a6) -- (a5) -- (a4) -- (a3) -- (a2) --
(a1) -- (a1);
\node (m) at (-10,0) [acteur][label=$m_{\pm\pm}$]{};
\node (p) at ( -1,0)[acteur][label={$p_{\pm}$}]{}; 
\node (a1) at ( 0,0) [acteur][label=$a_1$]{}; 
\node (a6) at ( -2.5,7.5) [acteur][label=$a_6$]{}; 
\node (a5) at (106.5:6) [acteur][label={$a_5$}]{}; 
\node (a4) at (105:4) [acteur][label={$a_4$}]{}; 
\node (a3) at (103:2) [acteur][label={$a_3$}]{}; 
\node (a2) at (101:1) [acteur][label={$a_2$}]{};
\end{tikzpicture} 
\end{figure}
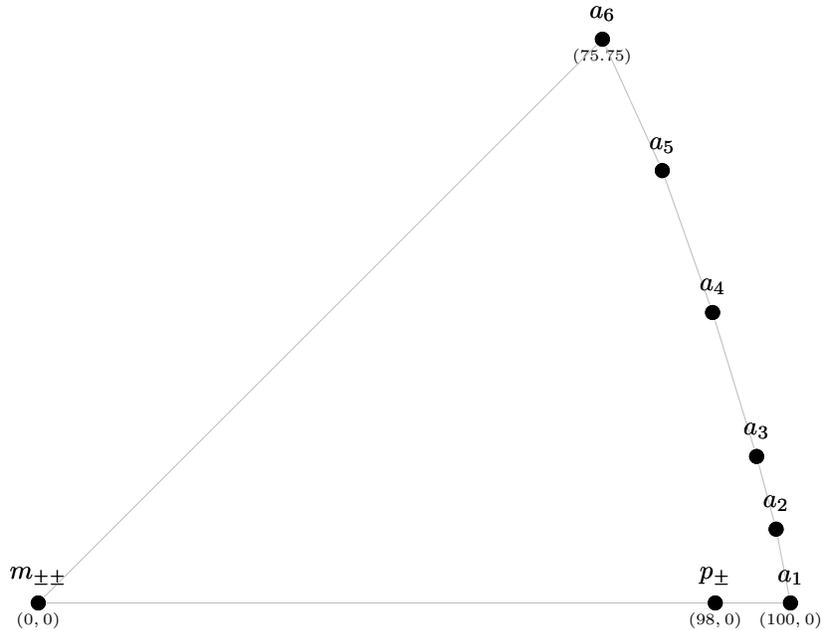

We have drawn the points in Figure \ref{fig:points} to try to
illustrate our key assumptions. \change{Our first is that out points lie in
the first quadrant below the line $y = x$:
\begin{gather} \label{eq:first quadrant assumption}
100 = x_1 > x_2 > \dots > x_k > x_{k+1} = 75 \quad \text{and} \\
0 = y_1 < y_2 < \dots y_k < y_{k+1} = 75. \label{eq:first quadrant assumption2}
\end{gather}
}
Our second is a simple convexity assumption:
\begin{gather} \label{eq:convex assump}
\begin{array}{c} \text{The points $a_1, a_2 \dots a_{k+1}$ are in
  convex position,} \\
  \text{i.e., no $a_j$ is in the convex hull of the other $a_i$.}
\end{array} \end{gather}
Our third assumption says that the points $a_i$ ``almost lie on a line":
\begin{gather}
  \label{eq:ASSUMP}
  \begin{array}{c}  \text{For all $1 \le i \le k$, the line drawn through $a_i$ and $a_{i+1}$}
  \\
  \text{meets the $x$ axis at $u$, where $100 \le u \le 102$.} \end{array}
\end{gather}

It is clear that we can always chose $k+1$ points $a_i$ satisfying \eqref{eq:ASSUMP}.


\change{The following elementary consequence of our assumptions will be important later.}

\begin{lem}
  The line drawn through $a_i$ and $a_{i+1}$ meets the $y$ axis at
  $v_i$, where
  \begin{equation}
    \label{eq:ASSUMPy}
    v_i \ge \frac{75}{1-75/102} > 283
  \end{equation}
\end{lem}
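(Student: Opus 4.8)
Here is the proof strategy I would follow.

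\smallskip

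The plan is to first locate all the points $a_i$ inside the thin triangle $T:=\operatorname{conv}\{(100,0),(102,0),(75,75)\}$, and then to read the conclusion off from this, using convex position. For the first step: by \eqref{eq:first quadrant assumption}--\eqref{eq:first quadrant assumption2} the line $L_i$ through $a_i$ and $a_{i+1}$ has negative slope, and by \eqref{eq:ASSUMP} it meets the $x$-axis at the point $P_i:=(u_i,0)$ with $u_i\in[100,102]$. Since $0=y_1\le y_i\le y_{i+1}$, the collinear points $P_i,a_i,a_{i+1}$ occur in this order along $L_i$, so $a_i\in[P_i,a_{i+1}]$. Running this from $i=k$ down to $i=1$ gives $a_i\in\operatorname{conv}\{P_i,P_{i+1},\dots,P_k,a_{k+1}\}$; as each $P_j$ lies on $[(100,0),(102,0)]\subset T$ and $a_{k+1}=(75,75)\in T$, we conclude $a_i\in T$ for all $i$.

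\smallskip

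Next I would extract the combinatorial consequence. The line through $a_1=(100,0)$ and $a_{k+1}=(75,75)$ is one of the three edge-lines of $T$; since all $a_j$ lie in $T$, and no three of them are collinear (by convex position \eqref{eq:convex assump}), every $a_j$ with $1<j<k+1$ lies strictly on the side of this line containing $T$. Hence $[a_1,a_{k+1}]$ is an edge of $Q:=\operatorname{conv}\{a_1,\dots,a_{k+1}\}$, and the remaining arc of $\partial Q$ is a convex chain joining the two $x$-extreme vertices $a_{k+1}$ and $a_1$ and passing through all the $a_j$; being an $x$-monotone chain it must list the vertices in order of $x$-coordinate, i.e. it is $a_1,a_2,\dots,a_{k+1}$. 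In particular $[a_i,a_{i+1}]$ is an edge of $Q$ for every $i$, so all $a_j$ with $j\notin\{i,i+1\}$ lie on one and the same closed side of $L_i$.

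\smallskip

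Write $L_i(x)$ for the height of $L_i$ at abscissa $x$, so $L_i(u_i)=0$ and $L_i(0)=v_i$. The heart of the argument is the inequality $L_i(75)\ge 75$ (i.e. $a_{k+1}$ lies weakly below $L_i$) for every $i\in\{1,\dots,k\}$. For $i=k$ it is the equality $L_k(75)=75$, since $a_{k+1}\in L_k$. For $2\le i\le k-1$: because $u_i\ge 100$, the point $a_1=(100,0)$ satisfies $L_i(100)=v_i(1-100/u_i)\ge 0$, strictly (if $L_i(100)=0$ then $u_i=100$, so $a_1\in L_i$, forcing $a_1,a_i,a_{i+1}$ collinear); thus $a_1$ lies strictly below $L_i$, and by the previous paragraph $a_{k+1}$ lies on the same side of $L_i$ as $a_1$, hence weakly below it. For $i=1$ (so $k\ge 2$): here $u_1=100$ because $a_1$ is on the $x$-axis, and since $a_2\in T$, $a_2\ne a_1$, and $a_2$ is not on the line through $a_1$ and $a_{k+1}$, the ray from $a_1$ through $a_2$ lies strictly inside the angle of $T$ at the vertex $a_1$, between the directions towards $(102,0)$ and towards $(75,75)$; an elementary slope computation then gives that $L_1$ has slope $<-3$, whence $L_1(75)=25\cdot|\text{slope of }L_1|>75$.

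\smallskip

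Finally, compare $L_i$ with the line $L^*$ through $(102,0)$ and $(75,75)$, whose height function is affine with $L^*(102)=0$, $L^*(75)=75$ and $L^*(0)=\frac{75}{1-75/102}=\frac{7650}{27}>283$. The difference $L_i-L^*$ is affine; it is $\le 0$ at $x=102$ (since $u_i\le 102$ gives $L_i(102)=v_i(1-102/u_i)\le 0$) and $\ge 0$ at $x=75$ (the inequality just proved), so it is non-increasing on $[75,102]$, and therefore $L_i(0)-L^*(0)\ge 0$, i.e. $v_i\ge\frac{75}{1-75/102}$, as required. The step I expect to take the most care is the second paragraph: checking that $[a_1,a_{k+1}]$ — and then each $[a_i,a_{i+1}]$ — is genuinely an edge of $Q$. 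This is precisely where \eqref{eq:ASSUMP} (through the containment $a_i\in T$) must be combined with convex position, and it is what rules out the "zig-zag" configurations that convex position alone would permit.
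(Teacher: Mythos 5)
Your proof is correct and follows essentially the same route as the paper: both arguments use convex position together with \eqref{eq:ASSUMP} to force each line $L_i$ to pass weakly above $(75,75)$ while meeting the $x$-axis at $u_i \le 102$, and then obtain the bound by comparison with the reference line through $(75,75)$ and $(102,0)$, whose $y$-intercept is $75/(1-75/102)$. The only difference is one of detail: the paper asserts the orderings $v_1 > \cdots > v_k$ and $u_1 < \cdots < u_k \le 102$ ``by convexity'' and reduces to bounding $v_k$, whereas you prove the underlying convex-chain facts explicitly (containment of the $a_i$ in the triangle with vertices $(100,0)$, $(102,0)$, $(75,75)$ and the edge structure of their convex hull) and bound each $v_i$ directly.
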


\begin{remark}
  The reader is reminded of Remark \ref{rem:constants} when
  trying to make sense of constants!
\end{remark}

\begin{proof}
  \change{Consider the lines $L_i$ drawn through $a_i$ and $a_{i+1}$
    for $1 \le i \le k$. Let $u_i$ (resp. $v_i$) denote the
    intersection point of $L_i$ with the $x$-axis (resp. $y$-axis). By
    our convexity assumption \eqref{eq:convex assump} we have:
    \begin{equation}
      v_1 > v_2 > \dots > v_k
    \end{equation}
    By our convexity assumption \eqref{eq:convex assump} and our
    ``almost on a line'' assumption \eqref{eq:ASSUMP} we have:
    \begin{equation}
      u_1 < u_2 < \dots < u_k \le 102.
    \end{equation}
It follows (again by convexity) that if we consider the line $L$
through $a_{k+1} = (75,75)$ and
$(102,0)$ it meets the $y$ axis at a point $u$ satisfying
\[ v_k > u. \]
Now an explicit computation yields that
\[
  u=\frac{75}{1-75/102} 
\]
and the lemma follows.
}
\end{proof}

\change{
  Finally, we count the vertices of $D_k$,
  justifying the count in the introduction:}

\begin{lem}
$D_k$ has $16k + 24$ vertices.
\end{lem}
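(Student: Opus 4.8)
The plan is to identify the vertex set of $D_k$ with the orbit $\Gamma\cdot\{m_{\pm\pm},p_\pm,a_i\}$, so the statement breaks into three parts: compute the size of this orbit, check that its points are pairwise distinct, and check that each one is genuinely a vertex (not absorbed into the convex hull of the others). Since $D_k$ is by definition the convex hull of this orbit, its vertices are exactly the orbit points that survive the third check, so the count will drop out. For the count I would use orbit--stabilizer together with \eqref{orderofgamma} and the fact that no element of $\Gamma\setminus\Gamma^+$ fixes a point of a drum skin, so the $\Gamma$-stabilizer of any motif point already lies in $\Gamma^+$. One then reads off $\Stab(m_{++})=\langle\e_1,\e_2,\sigma\rangle$ of order $8$; $\Stab(p_+)=\langle\e_2,\e_4\rangle$ of order $4$ (and $p_-=\e_3p_+$ lies in the same orbit); $\Stab(a_1)=\langle\e_2,\e_3,\e_4\rangle$ of order $8$; $\Stab(a_{k+1})=\langle\e_3,\e_4,\sigma\rangle$ of order $8$; and, using $0<y_i<x_i$ from \eqref{eq:first quadrant assumption}--\eqref{eq:first quadrant assumption2}, $\Stab(a_i)=\langle\e_3,\e_4\rangle$ of order $4$ for $2\le i\le k$. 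This gives orbits of sizes $8,16,8,8,16$, for a total of $8+16+8+8+(k-1)\cdot 16=16k+24$.

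Distinctness is then a short check: every orbit point has last coordinate $\pm 1$, and on a fixed drum skin an orbit point is determined up to sign by which of its first four coordinates vanish together with the multiset of absolute values of the nonzero ones; the strict monotonicity \eqref{eq:first quadrant assumption}--\eqref{eq:first quadrant assumption2} (forcing the $x_i$ distinct, the $y_i$ distinct, and $x_i>75>y_i$ nonzero for $2\le i\le k$) keeps all of these data distinct across the orbits and within each orbit. For the third part, by $\Gamma$-equivariance it suffices to treat one representative per orbit, all taken on the top skin $z=1$; and since every orbit point sits at height $\pm1$, it is enough to exhibit an affine functional in the first four coordinates that is uniquely maximized at the chosen representative over all height-$1$ orbit points, and then add a large positive multiple of $z$. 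For $m_{++}$ one takes $(\text{coordinate }3)+(\text{coordinate }4)$, with value $6$ against $\le 1$ on the $p$-orbit and $0$ on the $a$-orbits; for $p_+$ one takes $(\text{coordinate }1)+3(\text{coordinate }3)$, with value $101$ against $100$ at $a_1$ and smaller elsewhere; for $a_1$ one takes $(\text{coordinate }1)$, with value $100$, every other height-$1$ orbit point having first coordinate $<100$.

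The remaining representatives $a_i$ ($2\le i\le k$) and $a_{k+1}$ are the crux, and here I would pass to the plane of the first two coordinates. Write $u_i,v_i$ for the $x$- and $y$-intercepts of the line $L_i$ through $a_i$ and $a_{i+1}$. The key sub-claim is that the convex hull $Q$ of the first-two-coordinate projections of all height-$1$ orbit points is a convex polygon whose vertices inside the octant $\{0\le t\le s\}$ are exactly $a_1,\dots,a_{k+1}$. Granting this, $a_i$ ($2\le i\le k$) is the unique intersection of the two supporting lines $L_{i-1}$ and $L_i$, hence a vertex of $Q$, while $(75,75)=a_{k+1}$ is a vertex at which the symmetric functional $(\text{coordinate }1)+(\text{coordinate }2)$ is uniquely maximized, since that direction lies in the interior of the normal cone there (spanned by the two edge normals $\propto(v_k,u_k)$ and $\propto(u_k,v_k)$, which are distinct as $v_k>u_k$); in both cases the planar functional pulls back as above, noting that $a_i$ is the unique height-$1$ orbit point with the given first two coordinates. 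To prove the sub-claim I would invoke \eqref{eq:convex assump} and \eqref{eq:ASSUMP}, which give $u_1=100<u_2<\dots<u_k\le 102$ and, by the computation in the proof of \eqref{eq:ASSUMPy}, $v_1>v_2>\dots>v_k>283$: from $u_i>u_{i-1}$ and $v_{i-1}>v_i$ the turning cross-product $u_iv_{i-1}-u_{i-1}v_i$ at each interior $a_i$ is positive, so the chain $a_1\to\dots\to a_{k+1}$ is convex and bulges away from the origin; the corner where it meets its reflection in the $x$-axis at $a_1$ is automatically convex, and the corner where it meets its reflection in the diagonal at $a_{k+1}$ is convex precisely because $x_k+y_k<150$, an elementary consequence of $u_k\le 102$ and $x_k>75$ in the spirit of \eqref{eq:ASSUMPy}. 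Hence the $\Gamma^+$-orbit of this chain bounds a convex polygon equal to $Q$, the projections $(\pm98,0),(0,\pm98),(0,0)$ of the $p$- and $m$-orbit points lying in its interior.

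The main obstacle is exactly this last sub-claim. Everything else is routine orbit bookkeeping and the explicit separating functionals above, but the assertion that $a_1,\dots,a_{k+1}$ persist as vertices of the planar hull is where the geometric hypotheses must be used essentially: drop any of \eqref{eq:convex assump}, \eqref{eq:ASSUMP}, \eqref{eq:ASSUMPy} and some interior $a_i$ could be swallowed by the convex hull of its reflections and the other points, so crude size estimates on the relevant inner products do not suffice and one really needs the convexity of the chain to pin down $\partial Q$.
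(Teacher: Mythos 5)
Your proposal is correct and uses the same argument as the paper: the count is exactly the paper's orbit--stabilizer computation (identical stabilizers $\langle\e_1,\e_2,\sigma\rangle$, $\langle\e_2,\e_4\rangle$, $\langle\e_2,\e_3,\e_4\rangle$, $\langle\e_3,\e_4,\sigma\rangle$, $\langle\e_3,\e_4\rangle$, giving $8+16+8+8+16(k-1)=16k+24$). The only difference is that you additionally prove the assertion the paper leaves as ``the reader may easily check''---that every orbit point is genuinely a vertex---via explicit separating functionals for $m_{++},p_+,a_1$ and the planar convex-chain argument (using \eqref{eq:convex assump}, \eqref{eq:ASSUMP}, \eqref{eq:ASSUMPy}) for $a_2,\dots,a_{k+1}$, and that verification is sound.
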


\begin{proof}
  \change{The reader may easily check that our assumptions guarantee
    that any point in the $\Gamma$ orbit of $m_{\pm,\pm}$, $p_{\pm}$
    is indeed a vertex of $D_k$. We count the number of these orbits
    by exploiting the $\Gamma$-action and the orbit stabilizer
    theorem. Also note that up to the $\Gamma$-action we only need to
    consider $m_{++}$, $p_+$ and the $a_i$ for $1 \le i \le k+1$.}
      
\change{ The stabilizer of $m_{++}$ is $\langle \e_1, \e_2, \tau^2
    \rangle$, and hence its orbit has size $8$. Similarly the orbit of $a_{k+1}$ has size
    $8$. For $a_1$ the stabilizer is $\langle \e_2, \e_3, \e_4 \rangle$ and so
    its orbit also has size 8. The stabilizer of $p_+$ is $\langle
    \e_2, \e_4 \rangle$ and hence its orbit is of size 16.
    Finally, for $a_i = (x_i, y_i, 0, 0, 1)$ for $2 \le i \le k$ our
    assumptions \eqref{eq:first quadrant assumption} and
    \eqref{eq:first quadrant assumption2} guarantee that $x_i, y_i > 0$ and $x_i \ne y_i$;
    hence its stabilizer is $\langle \e_3, \e_4 \rangle$ and its orbit
    is of size $16$.}

    \change{Adding up we get
    \[
      8 + 8 + 8 + 16 + (k-1)16 = 16k + 24
\]
as claimed.
  }
  \end{proof}

\subsection{The top drum skin} It seems hopeless to study $D_k$ in
detail. However we can get a detailed understanding of the top drum
skin $D_k^+$, which is what we do here.

By abuse of notation, we use the same notation as earlier for certain
vertices of $D_k^+$:
\begin{align*}
  m_{\pm,\pm} &= (0,0,\pm 3,\pm 3) \\
  p_{\pm} &= (98,0,\pm 1,0) \\
  a_1 &= (100, 0, 0, 0) \\
  a_2 &= (x_2,y_2,0,0) \\
  \vdots & \qquad \vdots \\
     a_{k} &= (x_{k-1},y_{k-1},0,0) \\       
  a_{k+1} &= (75,75,0,0) 
\end{align*}
One checks easily that all vertices of $D_k^+$ are obtained from these
vertices by acting by the group $\Gamma^+$ generated by the
permutation $(12)(34)$ of coordinates together with all sign flips.

\begin{remark}
As described in the introduction, the motivation for this construction
was Santos' original counterexample, which was the closure of the five
points
\[
(18,0,0,0,1), (0,0,45,0,1), (15,15,0,0,1), (0,0,30,30,1), (10,0,0,40,1) \]
under $\Gamma$. We started with this construction and experimented with adding points to the fundamental domain so that the width increases, until we eventually found a pattern that generalizes.   
\end{remark}

\begin{remark} \label{rem:double pancake}
  One aspect of the geometry of $D$ is worth pointing
  out, namely its curious ``double pancake'' structure. Consider a
  pancake embedded in $\RM^3$ so that it is symmetric 
  about the origin in the $x$ and $y$ coordinates. Points on the edge
  of the pancake have large $x$ and $y$ coordinates, and all points of
  the pancake have small $z$ coordinates. One can imagine a $4d$
  version of this, (a ``$4d$ pancake'') with two large coordinates, and two small
  coordinates.

  It is remarkable that in our example the top and bottom
  drum skins are both such $4d$ pancakes: $D^+$ has large first
  two coordinates (on the order of $100$) and small second two
  coordinates (on the order of $1$). The large and small coordinates
  are swapped in $D^-$. The reader wishing to see this structure more
  explicity is referred to the cells ``Understanding the top drum
  skin'' and ``The pancake structure'' in the colab \cite{colab}.

  It is also interesting to note that amongst our symmetric searches,
  those drums which exhibited the ``double pancake'' structure where
  much more likely to provide drums of large width. \change{(See \cite[Lemma
  2.13]{MSW} and Remark \ref{rem:doublethm}.)}
\end{remark}

\subsection{Facets in the top drum skin} Recall that our first goal is
to come to grips with the top drum skin. The following is the first
significant step towards the proof of our main theorem:
\begin{prop} \label{prop:top_skin}
  Every facet of $D_k^+$ is conjugate under $\Gamma^+$ to one of the
  following facets (for $1 \le i \le k$):
  \begin{align*}
    B &= \{ m_{++}, m_{+-}, p_+, a_{k+1} \}, \\
    E_i &= \{ m_{++},m_{-+},a_i,a_{i+1} \}, \\
    C_i & = \{ m_{++},p_+, a_i, a_{i+1} \}.
  \end{align*}
\end{prop}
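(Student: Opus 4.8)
The plan is a direct verification in two halves: first that $B$, the $E_i$ and the $C_i$ really are facets, then that there are no others. Throughout, work in $\RM^4$ (dropping the constant last coordinate) and use the two coordinate projections $\pi(x)=(x_1,x_2)$ and $\pi'(x)=(x_3,x_4)$. The map $\pi'$ sends every vertex of $D_k^+$ into the square $[-3,3]^2$, with the four corners attained exactly at $m_{\pm\pm}$; and $\pi$ maps $D_k^+$ onto a convex polygon $Q$ whose vertices are precisely the $\Gamma^+$-orbit of $a_1,\dots,a_{k+1}$, the vertices $p_\pm$ and their conjugates projecting into the interior of $Q$. I record the facts about $Q$ that get used repeatedly: by \eqref{eq:convex assump}, \eqref{eq:first quadrant assumption} and \eqref{eq:first quadrant assumption2} the points $a_1,\dots,a_{k+1}$ are consecutive vertices of $Q$; by \eqref{eq:ASSUMP} and \eqref{eq:ASSUMPy} the edge $\{a_i,a_{i+1}\}$ of $Q$ lies on a line meeting the axes at $(u_i,0)$, $(0,v_i)$ with $u_i\in[100,102]$ and $v_i>283$; and every $a_j$ lies strictly on the far side of the line through $a_1$ and $a_{k+1}$, so in particular $v_1\ge300$ — else the line through $a_k$ and $a_{k+1}=(75,75)$ would meet the $x$-axis below $100$, contradicting \eqref{eq:ASSUMP}. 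Finally note that $D_k^+$ is $4$-dimensional, so a facet with four affinely independent vertices is a $3$-simplex, and $B$, $E_i$, $C_i$ are each four affinely independent points.

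For the first half, for each of $B$, $E_i$, $C_i$ solve the small linear system for the affine functional $f$ vanishing on its four vertices — unique up to scalar — normalised so that $f\ge0$ on the $m$-vertices, and check $f\ge0$ on every vertex of $D_k^+$ with equality exactly on the four listed. I would do this one $\Gamma^+$-orbit at a time. On the orbit of $m_{++}$ this is the elementary fact that $c_0+3c_3\sigma+3c_4\tau\ge0$ for all signs $\sigma,\tau\in\{\pm1\}$, with the right equalities. On the orbit of $p_+$ it is a short estimate using only $u_i\ge100$ and $v_i>283$. The heart of the matter is the orbit of the $a_j$: there $f|_{\{x_3=x_4=0\}}$ is, up to a positive scalar, exactly the supporting functional of $Q$ along the edge $\{a_i,a_{i+1}\}$ (for $E_i$ and $C_i$), respectively at the vertex $a_{k+1}$ (for $B$), hence it is $\ge0$ on all of $Q$; for $B$ this uses that the slope of the line $f=0$ in $\{x_3=x_4=0\}$ lies strictly between the slopes of the two edges of $Q$ at $a_{k+1}$, which follows from the slope bound on $\{a_k,a_{k+1}\}$ coming from \eqref{eq:ASSUMP} (its slope lies between $-3$ and $-\tfrac{25}{9}$), and for $E_1$, $C_1$ (where $u_1=100$) it uses $v_1\ge300$.

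For the second half let $F$ be any facet with defining functional $f=c_0+\sum c_ix_i$, $f\ge0$ on $D_k^+$. Restricting to the plane of the $m$-vertices gives $c_0\ge3|c_3|+3|c_4|$. One first excludes $c_3=c_4=0$ (then $F$ would be $\pi^{-1}$ of a proper face of $Q$, at most $1$-dimensional since $a$-vertices have unique $\pi$-preimages and $p$-vertices project into the interior) and then excludes $c_0>3|c_3|+3|c_4|$ (then $f>0$ on every $m$-vertex, so $F$ has only $p$- and $a$-vertices; examining $\bar f:=f|_{\{x_3=x_4=0\}}\ge0$ on $Q$: if $\bar f>0$ on $Q$ then $F$ has only $p$-vertices, of which no more than two lie on a common hyperplane $f=0$, too few for a $3$-face; if $\bar f$ vanishes on a vertex or an edge of $Q$, then $F$ must also contain a $p$-conjugate with a nonzero $x_3$- or $x_4$-coordinate, and including any one of these forces $f$ to take a negative value at another $m$-vertex). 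Hence $c_0=3|c_3|+3|c_4|$ and, after applying an element of $\Gamma^+$ (transitive on $m$-vertices), $m_{++}\in F$ and $c_3,c_4\le0$. If $c_3,c_4<0$ then $m_{++}$ is the only $m$-vertex of $F$; the $a$-vertices of $F$ form a vertex or an edge of $Q$ (they are the vertex set of the face $F\cap\{x_3=x_4=0\}$ of $Q$), $F$ has at most one further vertex, a $p$-conjugate, and the slope analysis of the supporting line $\bar f=0$ at the relevant face of $Q$, together with $c_3,c_4<0$, forces $F=\{m_{++},p_+,a_i,a_{i+1}\}=C_i$. If exactly one of $c_3,c_4$ vanishes, say $c_4=0$, then $F$ contains exactly the two $m$-vertices of an edge of $[-3,3]^2$; the same discussion forces $F=E_i$, or $F=B$ in the borderline case where $F\cap\{x_3=x_4=0\}=\{a_{k+1}\}$ and the remaining vertex of $F$ is $p_+$.

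The main obstacle throughout is the geometry of the polygon $Q$: both the $a_j$-orbit check in the first half and the case pruning in the second reduce to deciding whether an explicit line in the plane $\{x_3=x_4=0\}$ is a supporting line of $Q$ at a prescribed vertex or edge, and this is exactly where \eqref{eq:convex assump}, \eqref{eq:ASSUMP} and \eqref{eq:ASSUMPy} — and the special behaviour at $a_1$ — are used; everything else is routine linear algebra and sign-checking. (Alternatively, once the first half is done, the second can be replaced by the purely combinatorial verification that the $\Gamma^+$-orbits of $B$, the $E_i$ and the $C_i$ form a closed subcomplex of $\partial D_k^+$, i.e. that each triangular $2$-face of a listed facet is shared by exactly one other listed facet.)
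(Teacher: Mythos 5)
Your first half is essentially the paper's argument: for each of $B$, $E_i$, $C_i$ you take the functional vanishing on its four vertices (unique up to scale) and verify nonnegativity orbit by orbit, with the orbit of the $a_j$ controlled by the supporting-line/intercept estimates coming from \eqref{eq:convex assump}, \eqref{eq:ASSUMP} and \eqref{eq:ASSUMPy}; this is the same mechanism as the paper's explicit $f_B$ and the decompositions $f=f_1+f_2+1$, and your slope checks (including $v_1\ge 300$ and the slope of the $B$-hyperplane at $a_{k+1}$) are correct. Your second half, however, replaces the paper's completeness argument --- each listed facet is a $3$-simplex, hence has exactly four neighbours; one verifies these are again $\Gamma^+$-conjugates of listed facets and concludes via connectedness of the facet--ridge graph, which is exactly your closing parenthetical --- by a direct classification of supporting hyperplanes, and there one step is false as stated. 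In the case $c_0>3|c_3|+3|c_4|$ (no $m$-vertex on $F$) with $\bar f$ vanishing on an edge of $Q$, you claim that putting any single $p$-conjugate on the hyperplane already forces $f<0$ at some $m$-vertex. It does not: if $f$ is the defining functional of $C_i$ (normalised with constant term $1$), then $g=f+f\circ\e_4$ is $\ge 0$ on the $\e_4$-invariant polytope $D_k^+$, vanishes exactly on the ridge $C_i\cap\e_4C_i=\{a_i,a_{i+1},p_+\}$, and is strictly positive at all four $m$-vertices, since its third and fourth coefficients are $2\delta_2$ and $0$ with $|\delta_2|<0.04$, so $c_0=2>6|\delta_2|$. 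So a $p$-conjugate can perfectly well lie on such a hyperplane; what saves the case is that no \emph{facet} arises, because at most one $p$-conjugate can lie on it: two of $(98,0,\pm1,0)$ would force $c_3=0$, the points $(-98,0,\pm1,0)$ give $f\ge c_0(1+98/102)-|c_3|>0$, and $(0,\pm98,0,\pm1)$ would require $|c_4|\ge c_0(1-98/283)>c_0/3$, contradicting the $m$-inequality (here one uses $c_1=-c_0/u$, $c_2=-c_0/v$ with $u\in[100,102]$, $v>283$, and the analogous intercept bounds in the vertex case). With no $m$-vertex, at most two $a$-vertices and at most one $p$-conjugate, $F$ has at most three vertices and cannot be a facet.

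The same kind of estimate is silently needed at two further points you only assert: that at most two points of the $p$-orbit lie on a common supporting hyperplane (in the subcase $\bar f>0$ on $Q$), and, in your main branch $c_3,c_4<0$, that $F$ ``has at most one further vertex, a $p$-conjugate'' --- ruling out, say, $p_+$ and $\sigma(p_+)$ occurring together again requires comparing the forced intercepts of $\bar f$ with the supporting lines of $Q$ (their admissible intercepts are incompatible with $|c_3|+|c_4|=c_0/3$). These claims are true and provable by the intercept analysis you already use elsewhere, so your classification route can be completed, but as written it contains one step that fails and two counting claims without proof. By contrast, the paper's finish (your parenthetical alternative: the $\Gamma^+$-orbits of $B$, $E_i$, $C_i$ form a subcomplex closed under crossing ridges, plus connectedness of the facet--ridge graph) avoids the hyperplane classification entirely and is the shorter way home; if you keep the dual classification, spell out the one-$p$-conjugate bounds explicitly.
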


\begin{remark}
  The reader is referred to Figures \ref{fig:facet B}, \ref{fig:facet E_i} and
  \ref{fig:facet C_i} which depict the projections of these 3 types of facets. 
\end{remark}


  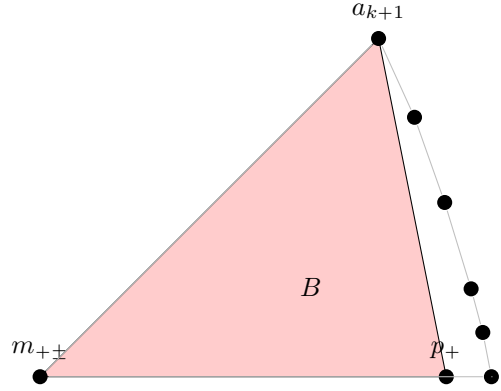
\begin{figure}[p] 
    \caption{Projection of the facet $B$.}
    \label{fig:facet B}
 \centering 
\begin{tikzpicture}
 [scale=.6,acteur/.style={circle, fill=black,thick, inner sep=2pt, minimum size=0.2cm}] 
%
%
\filldraw[draw=black,fill=red!20] (-10,0) -- (-1,0) -- ( -2.5,7.5) -- cycle;
\node (m) at (-10,0) [acteur][label=$m_{+\pm}$]{};
\node (p) at ( -1,0)[acteur][label={$p_{+}$}]{}; 
\node (a1) at ( 0,0) [acteur]{};
\node (a6) at ( -2.5,7.5) [acteur][label=$a_{k+1}$]{}; 
\node (a5) at (106.5:6) [acteur]{};
\node (a4) at (105:4) [acteur]{};
\node (a3) at (103:2) [acteur]{};
\node (a2) at (101:1) [acteur]{};
\draw[gray!50!white] (a1) -- (m) -- (a6) -- (a5) -- (a4) -- (a3) -- (a2) --
(a1) -- (a1);
\node at (-4,2) {$B$};
\end{tikzpicture} 

\end{figure}

  \begin{figure}[p]
    \caption{Projection of the facet $E_i$.}
     \label{fig:facet E_i}
 \centering 
\begin{tikzpicture}
 [scale=.6,acteur/.style={circle, fill=black,thick, inner sep=2pt, minimum size=0.2cm}] 
%
%
\filldraw[draw=black,fill=red!20] (-10,0) -- (103:2) -- (105:4) -- cycle;
\node (m) at (-10,0) [acteur][label=$m_{\pm+}$]{};
\node (p) at ( -1,0)[acteur]{};
\node (a1) at ( 0,0) [acteur]{};
\node (a6) at ( -2.5,7.5) [acteur]{};
\node (a5) at (106.5:6) [acteur]{};
\node (a4) at (105:4) [acteur][label={$a_{i+1}$}]{}; 
\node (a3) at (103:2) [acteur][label={$a_i$}]{}; 
\node (a2) at (101:1) [acteur]{};
\draw[gray!50!white] (a1) -- (m) -- (a6) -- (a5) -- (a4) -- (a3) -- (a2) --
(a1) -- (a1);
\node at (-4,2) {$E_i$};
\end{tikzpicture} 
\end{figure}

  \begin{figure}[p] 
    \caption{Projection of the facet $C_i$.}
    \label{fig:facet C_i}
 \centering 
\begin{tikzpicture}
 [scale=.6,acteur/.style={circle, fill=black,thick, inner sep=2pt, minimum size=0.2cm}] 
%
%
\filldraw[draw=black,fill=red!20] (-10,0) -- (-1,0) -- (103:2) --
(105:4) -- cycle;
\draw (-10,0) -- (103:2);
\draw (-1,0) -- (105:4);
\node (m) at (-10,0) [acteur][label=$m_{++}$]{};
\node (p) at ( -1,0)[acteur][label={$p_{+}$}]{}; 
\node (a1) at ( 0,0) [acteur]{};
\node (a6) at ( -2.5,7.5) [acteur]{};
\node (a5) at (106.5:6) [acteur]{};
\node (a4) at (105:4) [acteur][label={$a_{i+1}$}]{}; 
\node (a3) at (103:2) [acteur][label={$a_i$}]{}; 
\node (a2) at (101:1) [acteur]{};
\draw[gray!50!white] (a1) -- (m) -- (a6) -- (a5) -- (a4) -- (a3) -- (a2) --
(a1) -- (a1);
\node at (-3,.7) {$C_i$};
\end{tikzpicture} 
\end{figure}

  Our proof is in two steps:
\begin{enumerate}
\item We prove that the listed facets are indeed facets. To
  do this, we produce a functional which is zero on the
  specified vertices, and $>0$ on all other vertices of $D_k^+$.
\item We prove that we have all facets on our list up to $\Gamma^+$
  conjugacy. For this we check that all facets incident to a facet on
  our list are $\Gamma^+$-conjugates of facets on our list.
\end{enumerate}
Actually, the second step gives us slightly more, namely good control
of the incidence between the facets in the top drum skin. We exploit
this fact in the next section.

\begin{proof}[Proof that the claimed facets are facets:]
Now, we roll up our sleeves. We handle each family of facets separately:

  \emph{Facet $B$}: Consider the functional given by the row vector:
  \[f_B =  (-1, -\frac{24}{25}, -49, 0, 3 \cdot 49) \]
One checks easily that $f_B$ vanishes at $m_{+,\pm}$, $p_+$ and
$a_{k+1}$. Furthermore, \begin{enumerate}
\item $f_B$ is $>0$ on $m_{-,+},m_{-,-}$.
  \item $f_B$ is $>0$ on all points of the $\Gamma^+$ orbit of $p_+$ except $p_+$.
  \end{enumerate}
In order to establish that $f_B$ is $>0$ on the $\Gamma^+$ orbits
of $a_1, a_2, \dots, a_k$ we use a convexity
argument. Consider the restriction of $f_B$ to the plane
$(*,*,0,0)$. It is given by the functional
\[
-x - \frac{24}{25}y + 3 \cdot 49.
\]
It is now easy to check that this functional is $\ge 0$ on the convex
hull of the $\Gamma^+$-orbits $a_1, a_2, \dots, a_{k+1}$ and has a
unique zero at $a_{k+1}$. Hence $B$ is a facet as claimed.

\emph{Facet $E_i$}. Let us view $\RM^4$ as the direct sum of vector spaces $V_1
\oplus V_2$, where $V_1$ (resp. $V_2$) is the subspace corresponding
to the first and second (resp. third and fourth) coordinates. Note
that the $a_i$ lie in $V_1$ and the $m_{\pm \pm}$ lie in 
$V_2$. Consider \emph{linear} functionals on $V_1$ and $V_2$ respectively
defined as follows:
\begin{enumerate}
\item $f_1$ is $-1$ on $a_i$ and $a_{i+1}$ (and hence $>-1$ at all
  other points in the $\Gamma^+$-orbit of the $a_i$'s);
\item $f_2$ is $-1$ on $m_{++}$ and $m_{-+}$ (and hence $>-1$ on $m_{-+}$ and
  $m_{--}$).
\end{enumerate}
Now consider the functional $f = f_1 + f_2 + 1$ defined on $\RM^4$. By
property (1), $f$ is $\ge 0$ on the $\Gamma^+$-orbit of the $a_i$ and
$=0$ only at $a_i$ and $a_{i+1}$. By property (2), it is zero on $m_{++}$
and $m_{-+}$ and $>0$ on $m_{-+}$ and   $m_{--}$.

It remains to check that $f$ is $>0$ on the $\Gamma^+$-orbit of
$p_+$. Firstly, note that $f_2((c,d)) = -\frac{1}{3}d$ and hence
$f((\pm 98, 0, \pm 1, 0)) > 0$. Also, amongst points of the form $(0,
\pm 98, 0, \pm 1)$, $f$ achieves its minimum at $(0,98,0,1)$ by
inspection. Hence we are done if we can show that
\begin{equation}
  \label{eq:ineqtarget}
f_1((0,98)) + f_2((0,1)) > -1.  
\end{equation}
By our formula for $f_2$, we have $f_2((0,1)) = -\frac{1}{3}$. Now,
consider the line through $a_i$ and $a_{i+1}$ as in \S
\ref{sec:critical}. We know that $f$ takes the value $-1$ along this
line, and hence, denoting by $v$ the intersection point of this line
with the $y$-axis, $f_1((0,v)) = -1$. Hence
\[
f_1((0,98)) = 98 \cdot f_1((0,1)) = \frac{98}{v} f_1((0,v)) = -
\frac{98}{v} > -\frac{98}{283}.
\]
where we have used \eqref{eq:ASSUMPy} for the last inequality. Hence
we have \eqref{eq:ineqtarget} and $E_i$ is a facet as claimed.

\emph{Facet $C_i$:} The points $m_{++}, p_+, a_i, a_{i+1}$ are clearly
affinely independent, and hence there exists a functional $f$ which is $=0$
on these four points, and $> 0 $ at 0.

More precisely, $f$ is defined up to a positive scalar. Hence,
altering $f$ if necessary, we may
assume that $f = f' + 1$ where $f'$ is linear. Let us further decompose $f$ as $f = f_1 + f_2
+ 1$ where $f_1$ (resp. $f_2$) is a linear functional on the first two
(resp. second two) coordinates in the decomposition $\RM^4 = \RM^2
\oplus \RM^2$ that we used above. We know that:
\begin{enumerate}
\item $f_1$ takes the value $-1$ at $a_i$ and $a_{i+1}$ (because $f$
  vanishes at $a_i$, $a_{i+1}$);
\item $f_2$ takes the value $-1$ at $(3,3)$ (because $f$ vanishes at
  $m_{++} = (0,0,3,3)$).
\end{enumerate}
We still have to use the fact that $f$ vanishes at $p_+$. In order to do so set
\[
\delta_1 = f_1((98,0)) \quad \text{and} \quad \delta_2 = f_2((1,0)).
\]
Because $f(p_+) = f_1((98,0))  + f_2((1,0)) + 1 = 0$ we have
\begin{equation}
  \label{eq:d1d2}
  \delta_1 + \delta_2 = -1.
\end{equation}

Now let $u$ be defined such that $f_1((u,0)) = -1$. (In other words,
$u$ is the intersection point of the line through $a_i$ and
$a_{i+1}$ and the $x$-axis, considered in \S \ref{sec:critical}.) Because
\[ -1 = f_1((u,0)) = u f_1((1,0)) = \frac{u}{98} \delta_1\]
we arrive at
\begin{equation} \label{eq:d1}
\delta_1 = -\frac{98}{u}.
\end{equation}
Note that our fundamental assumption \eqref{eq:ASSUMP} yields that
$100 \le u \le 102$ and hence that
\[
 -0.98 \le \delta_1 < -0.96
\]
(We have approximated by real numbers which will give us bounds
sufficient for our needs. There is some wiggle room here.) In
particular, using \eqref{eq:d1d2} we deduce that
\[
-0.04 < \delta_2 \le -0.02 
\]
  
Finally, if we write $f_2 = (\alpha, \beta)$ then $f_2((3,3)) = -1$
gives the equation
\begin{equation} \label{eq:alphabeta}
\alpha + \beta = -\frac{1}{3}
\end{equation}
and $f_2((1,0)) = \delta_2$ yields $\alpha = \delta_2$. Hence
\begin{equation}
  \label{eq:f2}
  f_2 = (\delta_2, -\frac{1}{3}- \delta_2).
\end{equation}

Thus, $f_2$ achieves its minimum ($=-1$) on the set $\{m_{++}, \ldots,
m_{--}\}$ at $m_{++} = (3,3)$. This shows
that $f$ is $> 0$ on $m_{-+}, m_{+-}, m_{--}$. (Alternatively, this
follows directly from (\ref{eq:f2}).) Similarly, using the
definition of $f_1$, we see that $f$ is  $\ge 0$ on the entire
$\Gamma^+$ orbit of the $\{ a_j\}$, and is $=0$ only at $a_i$ and $a_{i+1}$.

It remains to see that $f$ is $\ge 0$ on the $\Gamma^+$-orbit of
$p_+$. We have already seen that $f$ is zero on $p_+$ and it is easy
to see that it is then necessarily $> 0$ on $(98, 0, -1,0)$ and $(-98,
0, \pm 1,0)$. It is also immediate that among points of the form
\[
(0, \pm 98, 0, \pm 1)
\]
$f$ takes its minimal value at $\sigma(p_+) = (0, 98, 0, 1)$. Thus we
are done if we can establish that
\begin{equation}
  \label{eq:target2}
  f_1((0,98)) + f_2((0,1)) > -1.
\end{equation}
By \eqref{eq:f2} we have $f_2((0,1)) = -\frac{1}{3}- \delta_2$ and
this quantity is $> -1/3$. Also, writing $v$ for the intersection
point of the line joining $a_i$ and $a_{i+1}$ with the $y$-axis (as in
\S\ref{sec:critical}) we deduce that $f_1((0,98)) > -98/283$ using
\eqref{eq:ASSUMPy}. Thus we have established \eqref{eq:target2} and
$C_i$ is a facet as claimed.
\end{proof}

\begin{proof}[Proof that we have all facets:]
We now proceed to the second half of the proof, namely proving that
have we
written down all facets of $D_k^+$. Note that all facets of $D_k^+$ are
3-simplices, and hence have four neighbours in the facet ridge graph. It is enough to check
that all four neighbours of each facet on our list are
$\Gamma^+$-conjugates of facets on our list.

Consider the following diagram, which shows ``obvious'' incidences
between facets:
\begin{equation} \label{eq:incidence}
  \begin{array}{c}
\begin{tikzpicture}
  \node (ee) at (0,4) {$E_k$};
  \node (ee-1) at (0,3) {$E_{k-1}$};
  \node (edots) at (0,2) {$\vdots$};
  \node (e2) at (0,1) {$E_{2}$};
  \node (e1) at (0,0) {$E_{1}$};
    \node (fe) at (2,4) {$C_k$};
  \node (fe-1) at (2,3) {$C_{k-1}$};
  \node (fdots) at (2,2) {$\vdots$};
  \node (f2) at (2,1) {$C_{2}$};
  \node (f1) at (2,0) {$C_{1}$};
  \node (b) at (2,5) {$B$};
  \draw (ee) -- (fe);
  \draw (ee-1) -- (fe-1);
  \draw (e2) -- (f2);
  \draw (e1) -- (f1);
  \draw (b) -- (fe);
  \draw(ee) -- (ee-1);
  \draw(ee-1) -- (edots);
  \draw(edots) -- (e2);
  \draw(e2) -- (e1);
    \draw(fe) -- (fe-1);
  \draw(fe-1) -- (fdots);
  \draw(fdots) -- (f2);
  \draw(f2) -- (f1);
\end{tikzpicture} \end{array}
\end{equation}
For example, $E_i$ and $E_{i+1}$ are incident because they share the
three vertices $m_{++}, m_{+-}$ and $a_{i+1}$. Similarly, $E_i$ and
$E_{i+1}$ share the three vertices $m_{++}, a_i$ and $a_{i+1}$.

We know that every facet is incident to 4 other facets (as $D_k^+$ is
4-dimensional). Thus it is enough to complete this graph by showing
that every facet above is incident to 4 facets, all of which are in
the $\Gamma^+$-orbit of one of the facets above. (In other words, it
remains to complete the above graph by giving the remaining
``non-obvious'' incidences, involving $\Gamma^+$-conjugates).

It is a straightforward check to see that around all facets $\ne B$ we
get the following graph:
\begin{equation} \label{eq:incidence}
  \begin{array}{c}
\begin{tikzpicture}
  \node (ee) at (0,4) {$E_k$};
  \node (ee-1) at (0,3) {$E_{k-1}$};
  \node (edots) at (0,2) {$\vdots$};
  \node (e2) at (0,1) {$E_{2}$};
  \node (e1) at (0,0) {$E_{1}$};
    \node (fe) at (2,4) {$C_k$};
  \node (fe-1) at (2,3) {$C_{k-1}$};
  \node (fdots) at (2,2) {$\vdots$};
  \node (f2) at (2,1) {$C_{2}$};
  \node (f1) at (2,0) {$C_{1}$};
  \node (b) at (2,5) {$B$};
  \node (sb) at (0,5) {$\sigma B$};
  \draw (ee) -- (fe);
  \draw (ee-1) -- (fe-1);
  \draw (e2) -- (f2);
  \draw (e1) -- (f1);
  \draw (b) -- (fe);
  \draw (sb) -- (ee);
  \draw(ee) -- (ee-1);
  \draw(ee-1) -- (edots);
  \draw(edots) -- (e2);
  \draw(e2) -- (e1);
    \draw(fe) -- (fe-1);
  \draw(fe-1) -- (fdots);
  \draw(fdots) -- (f2);
  \draw(f2) -- (f1);
  \node (epse) at (-2,4) {$\epsilon_3C_k$};
  \node (epse-1) at (-2,3) {$\epsilon_3C_{k-1}$};
  \node (epse2) at (-2,1) {$\epsilon_3C_2$}; 
  \node (epse1) at (-2,0) {$\epsilon_3C_1$};
    \node (epsce) at (4,4) {$\epsilon_4C_k$};
  \node (epsce-1) at (4,3) {$\epsilon_4C_{k-1}$};
  \node (epsc2) at (4,1) {$\epsilon_4C_2$}; 
  \node (epsc1) at (4,0) {$\epsilon_4C_1$};
    \node (bote) at (0,-1) {$\epsilon_2E_{1}$};
    \node (botf) at (2,-1) {$\epsilon_2C_{1}$};
    \draw (epse) -- (ee);
    \draw (epse-1) -- (ee-1);
    \draw (epse2) -- (e2);
    \draw (epse1) -- (e1);
    \draw (epsce) -- (fe);
    \draw (epsce-1) -- (fe-1);
    \draw (epsc2) -- (f2);
    \draw (epsc1) -- (f1);
    \draw (f1) -- (botf);
    \draw (e1) -- (bote);
  \end{tikzpicture} \end{array}
\end{equation}
(One just needs to check that all edges represent genuine incidences
between facets. In other words, that for every pair of facets sharing
an edge contain 3 common vertices.)

This covers all nodes in \eqref{eq:incidence} except $B$. Here are the
4 incidences around $B$:
\[
\begin{tikzpicture}[xscale=1.5,yscale=.9]
  \node (b) at (0,0) {$\begin{array}{c} B\\ {\tiny \verteq} \\ {\tiny \{ m_{++}, m_{+-}, p_+, a_{k+1} \}}\end{array}$};
  \node (se) at (0,3) {$\begin{array}{c} \sigma E_k \\ {\tiny \verteq}
                          \\ {\tiny \{ m_{++}, m_{+-}, a_{k+1},
                          \sigma(a_k) \} } \end{array}$};
  \node (fe) at (-3,0) {$\begin{array}{c} C_k\\ {\tiny \verteq} \\
                           {\tiny \{ m_{++},p_+, a_k, a_{k+1} \} } \end{array} $};
  \node (e4b) at (3,0) {$\begin{array}{c} \e_2B \\ {\tiny \verteq} \\
                           {\tiny \{m_{++}, m_{+-}, p_+,
                           \e_2a_{k+1}\}} \end{array}$};
  \node (e2f) at (0,-3) {$\begin{array}{c} \e_4C_k  \\ {\tiny \verteq} \\
                           {\tiny \{m_{+-}, p_+, a_{k+1}, a_k \} } \end{array}$};
  \draw (se) -- (b) -- (e2f);
  \draw (e4b) -- (b) -- (fe);
\end{tikzpicture}
\]

Hence we have checked that every facet appearing in Proposition
\ref{prop:top_skin} is incident to four facets, which are also
$\Gamma^+$-conjugates of facets on our list. Hence our list of facets
is complete.
\end{proof}

Below it will be useful to know the full incidence graphs of facets in
a fundamental domain, as well as their incidences to facets which
share a ridge with a facet in the fundamental domain. That is, are
``just outside'' the fundamental domain. Combining all
incidences discovered in the previous proof, we obtain the following
graph of incidences:
\begin{equation} \label{eq:full_incidence}
  \begin{array}{c}
\begin{tikzpicture}
  \node (e2b) at (4,5) {$\e_2B$};
  \node (ee) at (0,4) {$E_k$};
  \node (ee-1) at (0,3) {$E_{k-1}$};
  \node (edots) at (0,2) {$\vdots$};
  \node (e2) at (0,1) {$E_{2}$};
  \node (e1) at (0,0) {$E_{1}$};
    \node (fe) at (2,4) {$C_k$};
  \node (fe-1) at (2,3) {$C_{k-1}$};
  \node (fdots) at (2,2) {$\vdots$};
  \node (f2) at (2,1) {$C_{2}$};
  \node (f1) at (2,0) {$C_{1}$};
  \node (b) at (2,5) {$B$};
  \node (see) at (2,6) {$\sigma E_k$};
  \node (sb) at (0,5) {$\sigma B$};
  \draw (ee) -- (fe);
  \draw (ee-1) -- (fe-1);
  \draw (e2) -- (f2);
  \draw (e1) -- (f1);
  \draw (b) -- (fe);
  \draw (sb) -- (ee);
  \draw(ee) -- (ee-1);
  \draw(ee-1) -- (edots);
  \draw(edots) -- (e2);
  \draw(e2) -- (e1);
    \draw(fe) -- (fe-1);
  \draw(fe-1) -- (fdots);
  \draw(fdots) -- (f2);
  \draw(f2) -- (f1);
  \node (epse) at (-2,4) {$\epsilon_3C_k$};
  \node (epse-1) at (-2,3) {$\epsilon_3C_{k-1}$};
  \node (epse2) at (-2,1) {$\epsilon_3C_2$}; 
  \node (epse1) at (-2,0) {$\epsilon_3C_1$};
    \node (epsce) at (4,4) {$\epsilon_4C_k$};
  \node (epsce-1) at (4,3) {$\epsilon_4C_{k-1}$};
  \node (epsc2) at (4,1) {$\epsilon_4C_2$}; 
  \node (epsc1) at (4,0) {$\epsilon_4C_1$};
    \node (bote) at (0,-1) {$\epsilon_2E_{1}$};
    \node (botf) at (2,-1) {$\epsilon_2C_{1}$};
    \draw (epse) -- (ee);
    \draw (epse-1) -- (ee-1);
    \draw (epse2) -- (e2);
    \draw (epse1) -- (e1);
    \draw (epsce) -- (fe);
    \draw (epsce-1) -- (fe-1);
    \draw (epsc2) -- (f2);
    \draw (epsc1) -- (f1);
    \draw (f1) -- (botf);
    \draw (e1) -- (bote);
    \draw (b) -- (see);
    \draw (b) -- (e2b);
    \draw (b) -- (epsce);
  \end{tikzpicture} \end{array}
\end{equation}
Note that every facet in our fundamental domain (i.e. $B$, $C_i$ and
$E_i$) is incident to 4 other facets, and so we know this graph
describes all incidences amongst facets, one of which is in the
fundamental domain. (We do not need incidences between facets not in
the fundamental domain and have not included them above.)

\subsection{Quotients by sign flips} \label{sec:signflips}

Consider the facet-ridge graph of $D_k^+$. This is big and
complicated. However, if we quotient by $\Gamma^+$ we get something
rather simple, as can easily be seen using the computations of the
previous section. Sadly,
this quotient is too coarse, and we lose too much
information. However, by quotienting by the subgroup of $\Gamma^+$
generated by sign flips we 
get a graph which is both easily visualized, and powerful enough to
get bounds.

\begin{prop} \label{prop:fr}
  The quotient of the facet-ridge graph of $D_k^+$ by the action induced by sign
  flips yields the following graph:
\[  \begin{array}{c}
\begin{tikzpicture}[xscale=1.5]
  \node (sc1) at (0,8) {$\sigma C_1$};
  \node (sc2) at (0,7) {$\sigma C_2$};
  \node (scdots) at (0,6) {$\vdots$};
  \node (sce) at (0,5) {$\sigma C_k$};
  \node (sB) at (0,4) {$\sigma B$};
  \node (ee) at (0,3) {$E_k$};
  \node (edots) at (0,2) {$\vdots$};
  \node (e2) at (0,1) {$E_{2}$};
  \node (e1) at (0,0) {$E_{1}$};
  \node (see) at (2,5) {$\sigma E_k$};
    \node (se1) at (2,8) {$\sigma E_1$};
  \node (se2) at (2,7) {$\sigma E_2$};
  \node (sedots) at (2,6) {$\vdots$};
  \node (B) at (2,4) {$B$};
  \node (fe) at (2,3) {$C_k$};
  \node (fdots) at (2,2) {$\vdots$};
  \node (f2) at (2,1) {$C_{2}$};
  \node (f1) at (2,0) {$C_{1}$};
    \draw (see) -- (sce);
    \draw (se2) -- (sc2);
    \draw (se1) -- (sc1);
    \draw (ee) -- (fe);
  \draw (e2) -- (f2);
  \draw (e1) -- (f1);
    \draw(sc1) -- (sc2);
    \draw(sc2) -- (scdots);
    \draw(scdots) -- (sce) -- (sB);
    \draw(sB) -- (ee);
  \draw(sB) -- (ee);
  \draw(ee) -- (edots);
  \draw(edots) -- (e2);
  \draw(e2) -- (e1);
      \draw(se1) -- (se2) -- (sedots) -- (see) -- (B) -- (fe);
    \draw(fdots) -- (f2);
    \draw(fdots) -- (fe);
    \draw(f2) -- (f1);
  \end{tikzpicture}
    \end{array} \]
(We ignore multiple edges and loops.)
\end{prop}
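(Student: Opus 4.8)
The plan is to read the quotient graph off from the incidence diagram \eqref{eq:full_incidence}, exploiting that the subgroup of sign flips is normal in $\Gamma^+$ and that $\sigma=\tau^2$ supplies the complementary symmetry. Write $N=\langle \e_1,\e_2,\e_3,\e_4\rangle$ for the group of sign flips and $[X]$ for the $N$-orbit of a facet $X$. The subgroup $N$ is normal in $\Gamma^+$ (indeed $\sigma\e_i\sigma^{-1}=\e_{\sigma(i)}$), has index $2$, and $\Gamma^+/N$ is generated by the image of $\sigma$, which satisfies $\sigma^2=1$. Thus $\Gamma^+=N\sqcup N\sigma$, so every $\Gamma^+$-orbit of facets is the union of the two $N$-orbits of $X$ and of $\sigma X$. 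Combined with Proposition \ref{prop:top_skin}, this shows that every facet of $D_k^+$ is $N$-conjugate to exactly one of
\[
B,\ \sigma B,\ E_i,\ \sigma E_i,\ C_i,\ \sigma C_i \qquad (1\le i\le k),
\]
and these $4k+2$ classes are pairwise distinct: facets in different $\Gamma^+$-orbits give different classes, while $[X]$ and $[\sigma X]$ are separated by comparing the vanishing patterns of the four vertices — for the $E_i$ one additionally uses that $E_i$ and $\sigma E_i$ differ in whether the first or second coordinate of the $a_j$-type vertices is the larger (in absolute value). So the nodes of the quotient graph are exactly the $4k+2$ nodes of the claimed picture.

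Next I would determine the edges incident to $[B]$, $[C_i]$ and $[E_i]$. As noted right after \eqref{eq:full_incidence}, each of $B$, $C_i$, $E_i$ is a $3$-simplex with exactly four neighbours in the facet-ridge graph of $D_k^+$, and all of them appear in \eqref{eq:full_incidence}; hence \eqref{eq:full_incidence} records every incidence involving a facet of the fundamental domain $\{B,C_i,E_i\}$. Projecting each such incidence to $N$-orbits then produces precisely the edges of the quotient graph incident to $[B]$, $[C_i]$ or $[E_i]$ (an edge $[X]$--$[Y]$ occurring exactly when $X$ is incident to some $N$-translate of $Y$). For instance $E_k$ is incident to $C_k$, $\sigma B$, $E_{k-1}$ and $\e_3C_k$, yielding edges $[E_k]$--$[C_k]$, $[E_k]$--$[\sigma B]$, $[E_k]$--$[E_{k-1}]$ (the term $\e_3C_k$ giving a second copy of $[E_k]$--$[C_k]$); the ``outward'' incidences $C_i\sim\e_4C_i$, $C_1\sim\e_2C_1$, $E_1\sim\e_2E_1$, $B\sim\e_2B$ collapse to loops, and $B\sim\e_4C_k$ duplicates $[B]$--$[C_k]$. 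Carrying this out for all of $B$, $C_i$, $E_i$ gives exactly the left column of the claimed graph, together with the rungs $[E_i]$--$[C_i]$, the edge $[B]$--$[C_k]$ and the bridging edge $[B]$--$[\sigma E_k]$.

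Finally, since $N$ is normal, $\sigma$ descends to a graph automorphism of the quotient; applying it to the edges just found gives all edges incident to $[\sigma B]$, $[\sigma C_i]$, $[\sigma E_i]$, namely the $\sigma$-image of the previous data under $B\leftrightarrow\sigma B$, $C_i\leftrightarrow\sigma C_i$, $E_i\leftrightarrow\sigma E_i$. The two halves are joined only through the edge $[B]$--$[\sigma E_k]$, so assembling everything and discarding loops and multiple edges (as the statement prescribes) reproduces the displayed graph. The only real work is the bookkeeping in the previous paragraph; I do not expect a genuine obstacle, since Proposition \ref{prop:top_skin} and the diagram \eqref{eq:full_incidence} have already pinned down the entire facet structure of $D_k^+$.
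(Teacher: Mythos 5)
Your proposal is correct and follows essentially the same route as the paper: take the $N$-orbit representatives supplied by Proposition \ref{prop:top_skin} as the nodes and read the edges off the complete list of incidences around the fundamental domain recorded in \eqref{eq:full_incidence}, transporting them to the $\sigma$-shifted half by symmetry. You merely make explicit some points the paper leaves implicit (normality of the sign-flip subgroup, distinctness of the $4k+2$ classes, loops/multiple edges collapsing), which is fine.
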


\begin{proof}
  Note that $\Gamma^+$ is isomorphic to a the semi-direct product of
  the subgroup of sign flips and $\langle \sigma \rangle$. In particular, by Proposition
  \ref{prop:top_skin} any facet is of the form $\e \sigma^j Y$ where
  $Y$ is one of the facets $B$, $C_i$ or $F_i$, $j \in \{ 0, 1\}$ and
  $\e$ is a product of sign flips. In particular, our graph contains
  representatives of all orbits under sign flips. It is now a simple
  matter to unpack the incidences listed in the proof of Proposition
  \ref{prop:top_skin} (or check the pictures on the previous page) to obtain the above graph.
\end{proof}

Actually, we need to consider a finer graph, which will almost
certainly appear unmotivated at present. We ask for a few pages' worth
of patience from the reader, where the relevance of this graph will
become apparent.

Consider the face graph $\Delta(D_k^+)$. We define $\widetilde{\GC}$
to be the full subgraph of $\Delta(D_k^+)$ with nodes corresponding to
the following faces of $\Delta(D_k^+)$:
\begin{enumerate}
\item All faces of dimension $\ge 2$;
\item All edges of the form $\{ a_i, a_{i+1}\}$ and their
  $\Gamma^+$-orbits.
\item Four vertices given by the $\Gamma^+$-orbit of $a_1$.
\end{enumerate}

We define $\GC$ to be quotient of $\widetilde{\GC}$ by the action of
sign flips.

\begin{prop}
  The graph $\GC$ is isomorphic to the graph displayed in Figure
  \ref{fig:G}.
\end{prop}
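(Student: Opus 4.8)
The strategy is to compute $\widetilde{\GC}$, and hence its quotient $\GC$, completely by hand, exploiting the fact that simpliciality (\eqref{eq:simplicial_assumption}) gives total control of the face lattice of $D_k^+$ in the relevant dimensions. Indeed $D_k^+$ is a $4$-dimensional polytope all of whose facets are $3$-simplices, so every nonempty proper face lies on some facet and is therefore a face of a simplex; in particular $\{a_i,a_{i+1}\}$ really is a $1$-face (it spans an edge of the $3$-simplex $C_i$), and every ridge is a triangle. Since any ridge of a $4$-polytope lies in exactly two facets, each ridge is recovered uniquely as the intersection $F\cap F'$ of an adjacent pair of facets; thus Proposition \ref{prop:top_skin} together with the incidence graph \eqref{eq:full_incidence} already contains all the incidence data among faces of dimensions $0,1,2,3,4$ that enters $\widetilde{\GC}$.

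The first step is to enumerate the nodes of $\GC$ layer by layer. Dimension $4$ contributes the single node $D_k^+$ (fixed by all of $\Gamma^+$). Writing $\Gamma^+\cong(\text{sign flips})\rtimes\langle\sigma\rangle$ (as in the proof of Proposition \ref{prop:fr}), Proposition \ref{prop:top_skin} shows that the sign-flip orbits of facets are represented by $B$, $\sigma B$, $E_i$, $\sigma E_i$, $C_i$, $\sigma C_i$ for $1\le i\le k$: these are the $4k+2$ facet-nodes. For the ridges I would write down, for each facet $B,E_i,C_i$ of the fundamental domain, its four $2$-faces (delete one vertex of the $3$-simplex), then record which of these lie in a common sign-flip orbit; the surviving ridge-nodes fall into a ``$C$-spine'' $[\{m_{++},p_+,a_j\}]$ ($1\le j\le k+1$), an ``$E$-spine'' $[\{m_{++},m_{-+},a_j\}]$, the ``rungs'' $[\{m_{++},a_i,a_{i+1}\}]$, the ridges $[\{p_+,a_i,a_{i+1}\}]$, the two further ridges $[\{m_{++},m_{+-},a_{k+1}\}]$ and $[\{m_{++},m_{+-},p_+\}]$, and the $\sigma$-images of all of these. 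Finally, dimension $1$ contributes the edge-nodes $[\{a_i,a_{i+1}\}]$ and $[\sigma\{a_i,a_{i+1}\}]$ ($1\le i\le k$), and --- since the $\Gamma^+$-orbit of $a_1$ is $\{(\pm100,0,0,0),(0,\pm100,0,0)\}$ --- dimension $0$ contributes exactly the two vertex-nodes $[a_1]$ and $[\sigma a_1]$.

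The second step is to install the edges, using only: $D_k^+$ is joined to every facet-node; a facet-node $[F]$ is joined to the ridge-nodes of the $2$-faces of $F$; a ridge-node $[R]$ is joined downward to an edge-node $[\{a_i,a_{i+1}\}]$ precisely when $R$, or a sign-flip translate of it, has both $a_i$ and $a_{i+1}$ as vertices; and an edge-node $[\{a_i,a_{i+1}\}]$ is joined to a vertex-node only when $a_i$ or $a_{i+1}$ lies in $\Gamma^+a_1$. Going through the ridge list, the only ridges with a downward neighbour are the rungs $\{m_{++},a_i,a_{i+1}\}$ and the ridges $\{p_+,a_i,a_{i+1}\}$ (and their $\sigma$-images), each joined to $[\{a_i,a_{i+1}\}]$; and by \eqref{eq:first quadrant assumption}--\eqref{eq:first quadrant assumption2} the only relevant $a_i$ in $\Gamma^+a_1$ is $a_1$ itself, so the only incidences into dimension $0$ are $[\{a_1,a_2\}]\!-\![a_1]$ and $[\sigma\{a_1,a_2\}]\!-\![\sigma a_1]$. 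Assembling this data and comparing with Figure \ref{fig:G} finishes the proof.

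The only real subtlety --- and the main place where care is needed --- is the sign-flip stabiliser of an individual face. For instance $\e_3$ fixes the facet $E_i$ and swaps two of its $2$-faces, so $[E_i]$ has only three ridge-neighbours rather than four; likewise the ridges $\{m_{++},p_+,a_1\}=C_1\cap\e_2C_1$, $\{m_{++},m_{-+},a_1\}=E_1\cap\e_2E_1$ and $\{m_{++},m_{+-},p_+\}=B\cap\e_2B$ become pendant in $\GC$, because their two ambient facets are identified by sign flips. Once these identifications are tracked correctly, the rest is a mechanical transcription of Proposition \ref{prop:top_skin} and diagram \eqref{eq:full_incidence}.
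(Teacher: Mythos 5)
Your route is essentially the paper's (enumerate the face--orbits directly from Proposition \ref{prop:top_skin} and \eqref{eq:full_incidence}, then quotient by sign flips), and much of the bookkeeping is right --- in particular the observation that $[E_i]$ has only three ridge-neighbours, that $C_1\cap\e_2C_1$, $E_1\cap\e_2E_1$, $B\cap\e_2B$ become pendant, and that the only downward incidences are through $[\{a_i,a_{i+1}\}]$ and $[a_1]$, $[\sigma a_1]$. But there is one genuine error: you put the $4$-dimensional face $D_k^+$ itself into $\widetilde\GC$ as a node ``joined to every facet-node''. The graph in Figure \ref{fig:G} has no such node (by the Remark its nodes have dimensions $3,2,1,0$ only), so the graph you assemble cannot be isomorphic to it, and your final sentence ``comparing with Figure \ref{fig:G} finishes the proof'' is exactly the step that would fail. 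This is not a cosmetic point about conventions: an apex node adjacent to all $4k+2$ facet-nodes would put every pair of facet-nodes at distance $2$, and then Corollary \ref{cor:bound} (the bound $\dist_{\GC}(F,\sigma a_1)\ge 2k+3$ on which the main theorem rests) would be false. The intended $\GC$ --- and the one the paper constructs, by barycentrically subdividing the quotient facet--ridge graph of Proposition \ref{prop:fr}, which produces only facet- and ridge-nodes --- contains no improper face; you must either restrict to proper faces or say explicitly that the $4$-face is excluded.

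Two smaller points. First, your ridge list double counts: $\{m_{++},m_{+-},a_{k+1}\}=B\cap\sigma E_k$ is itself the $\sigma$-image of the spine ridge $\{m_{++},m_{-+},a_{k+1}\}$, so listing it separately and then adding ``the $\sigma$-images of all of these'' repeats an orbit. Second, the concluding comparison with Figure \ref{fig:G} is asserted rather than performed; since your enumeration produces nodes such as $[\{p_+,a_i,a_{i+1}\}]$ and $[\{m_{++},m_{+-},p_+\}]$ whose placement in the figure is not obvious, and since it is precisely at this comparison that the spurious $D_k^+$-node would have been detected, the verification of the claimed isomorphism needs to be carried out explicitly rather than waved at.
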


\begin{remark}
  In Figure \ref{fig:G} we indicate the dimension of the face by
  colouring the node: black
  nodes are of dimension $3$, blue nodes are of dimension 2, green
  nodes are of dimension 1 and the red nodes are of dimension 0. 
\end{remark}

\begin{proof}
  We obtain the full subgraph of $\GC$ corresponding to faces of
  dimension $\ge 2$ by taking the barycentric subdivision of the graph
  in Proposition \ref{prop:fr}. It remains to fill in the faces of
  dimensions $0$ and $1$ (the red and green nodes in Figure
  \ref{fig:G}). However, this is easy: $\{ a_i, a_{i+1}\}$
  (resp. $\sigma \{ a_i, a_{i+1}\}$) is incident only to $C_i$ and
  $E_i$ (resp. $\sigma(C_i)$ and $\sigma(E_i)$) by
  inspection. Similarly,  $a_1$ is only incident to $\{ a_1, a_2 \}$
  and $\sigma a_1$ is only incident to $\sigma \{ a_1, a_2\}$ and
  we're done.
\end{proof}

The following bound is critical to our main theorem. Its proof is by
inspection of Figure~\ref{fig:G}:

\begin{cor} \label{cor:bound}
  For any $F \in \{ B, \sigma B, E_i, C_i \}$ we have
  \[
\dist_{\GC}(F, \sigma a_1) \ge 2k + 3.
\]
Similarly, for any $F \in \{ B, \sigma B, \sigma E_i, \sigma C_i \}$
we have
  \[
\dist_{\GC}(F, a_1) \ge 2k + 3.
\]
\end{cor}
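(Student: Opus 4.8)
The plan is to read both inequalities off the explicit shape of $\GC$ recorded in Figure~\ref{fig:G}. That figure presents $\GC$ as a ``subdivided ladder with two short tails'': the two rails run through $\sigma C_1,\dots,\sigma C_k,\sigma B,E_k,\dots,E_1$ and through $\sigma E_1,\dots,\sigma E_k,B,C_k,\dots,C_1$; the rungs join $\sigma C_i$ to $\sigma E_i$ and $E_i$ to $C_i$; every edge of this ladder has been barycentrically subdivided (the inserted node being the corresponding ridge); and finally a tail $\;\sigma C_1\cap\sigma E_1\ -\ \sigma\{a_1,a_2\}\ -\ \sigma a_1\;$ is attached at the top rung, with the mirror tail $\;C_1\cap E_1\ -\ \{a_1,a_2\}\ -\ a_1\;$ attached at the bottom rung. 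The only feature I will really use is local, and it is visible in Figure~\ref{fig:G}: $\sigma a_1$ has the single neighbour $\sigma\{a_1,a_2\}$; the node $\sigma\{a_1,a_2\}$ has only the neighbours $\sigma a_1$ and the ridge $\sigma C_1\cap\sigma E_1$; and that ridge node is adjacent only to $\sigma C_1$, $\sigma E_1$ and $\sigma\{a_1,a_2\}$.

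Granting this, every walk in $\GC$ from a node $F\notin\{\sigma a_1,\ \sigma\{a_1,a_2\}\}$ to $\sigma a_1$ must pass in order through one of $\sigma C_1,\sigma E_1$, then through $\sigma C_1\cap\sigma E_1$, then through $\sigma\{a_1,a_2\}$, before reaching $\sigma a_1$; hence for such $F$
\[
\dist_{\GC}(F,\sigma a_1)=\min\bigl(\dist_{\GC}(F,\sigma C_1),\ \dist_{\GC}(F,\sigma E_1)\bigr)+3 .
\]
So the first inequality reduces to showing $\dist_{\GC}(F,\sigma C_1)\ge 2k$ and $\dist_{\GC}(F,\sigma E_1)\ge 2k$ for every $F\in\{B,\sigma B,E_i,C_i\}$. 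Because the tails are pendant, a geodesic between two nodes of the subdivided ladder stays in the ladder, so these $\GC$-distances are exactly twice the corresponding distances in the graph of Proposition~\ref{prop:fr}. There the two rails are induced paths: walking up the left rail gives $\dist(\sigma B,\sigma C_1)=k$, walking up the right rail gives $\dist(B,\sigma E_1)=k$, and each $E_i$ (resp. $C_i$) lies strictly below $\sigma B$ (resp. $B$) on its rail and is therefore no closer to the top. Doubling yields $\dist_{\GC}(F,\sigma C_1),\dist_{\GC}(F,\sigma E_1)\ge 2k$, and hence $\dist_{\GC}(F,\sigma a_1)\ge 2k+3$.

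For the second inequality I would invoke symmetry rather than repeat the argument. The coordinate permutation $\sigma=\tau^2$ normalises the subgroup of sign flips, so it descends to an automorphism of $\GC=\widetilde{\GC}/(\text{sign flips})$, and this automorphism exchanges $a_1\leftrightarrow\sigma a_1$, $B\leftrightarrow\sigma B$, $E_i\leftrightarrow\sigma E_i$ and $C_i\leftrightarrow\sigma C_i$. Applying it to the bound just proved gives $\dist_{\GC}(G,a_1)\ge 2k+3$ for every $G\in\{\sigma B,B,\sigma E_i,\sigma C_i\}$, which is exactly the claimed second inequality.

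The whole thing is bookkeeping once Figure~\ref{fig:G} is in hand, so the only place I expect any friction is the verification that rail-walking really is optimal: one must be disciplined about the rungs in the middle of the ladder (and about the corner loops, which are discarded in $\GC$ in any case) and confirm that no such detour produces a shorter route to the top rung. That check is short, but it is the one step where the coarse quotient of Proposition~\ref{prop:fr} alone does not suffice and one genuinely needs the full incidence list assembled in \eqref{eq:full_incidence}.
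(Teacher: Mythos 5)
Your route is the paper's route: the paper's entire proof of this corollary is ``by inspection of Figure~\ref{fig:G}'', and your write-up is exactly that inspection made explicit (peel off the pendant tail ending at $\sigma a_1$, reduce to $\dist_{\GC}(F,\sigma C_1),\dist_{\GC}(F,\sigma E_1)\ge 2k$, get the latter from the subdivided-ladder structure of Proposition~\ref{prop:fr}, and obtain the second inequality from the $\sigma$-symmetry of $\GC$). The distance bookkeeping and the symmetry step are correct.

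One caution: your model of $\GC$ as ``subdivided ladder plus two short tails'' is a simplification of the graph actually defined in \S\ref{sec:signflips}. By construction $\widetilde{\GC}$ contains \emph{all} edge-orbits $\{a_i,a_{i+1}\}$ (these are the green middle nodes of Figure~\ref{fig:G} at every level, not only at the extreme rungs), and, since it contains all $2$-faces, also ridge-orbits such as $C_i\cap\e_4C_i=\{p_+,a_i,a_{i+1}\}$ and $B\cap\e_2B$, which do not appear in the barycentric subdivision of the Proposition~\ref{prop:fr} graph because they project to loops there. Consequently $\sigma\{a_1,a_2\}$ has a third neighbour (the orbit of $\sigma C_1\cap\e\sigma C_1$), and at each rung level the nodes $\{a_i,a_{i+1}\}$ and $C_i\cap\e_4C_i$ form a small cycle through $C_i$ and $E_i\cap C_i$ rather than a pendant tail. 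None of this harms the argument --- each such attachment is local to a single rung level, the extra neighbour of $\sigma\{a_1,a_2\}$ leads only back to $\sigma C_1$, and a detour through $\{a_i,a_{i+1}\}$ or $C_i\cap\e_4C_i$ is never shorter than the direct step $E_i\cap C_i \to C_i$ --- so your two key claims (any walk to $\sigma a_1$ must pass through $\sigma C_1$ or $\sigma E_1$ followed by at least three further steps, and facet-to-facet distances in $\GC$ are twice those in the Proposition~\ref{prop:fr} graph) remain valid, but they need this one extra ``no shortcut through the middle attachments'' check, which your appeal to ``the tails are pendant'' does not by itself supply.
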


\begin{figure}
  \caption{The subgraph $\GC$.}
  \label{fig:G}
  \begin{tikzpicture}[xscale=3]
    \foreach \y in {1,3,...,17} {
         \node[blue]  (l\y) at (-1,\y) {$\bullet$}; 
         \node[blue]  (r\y) at (1,\y) {$\bullet$};
         \node[green]  (m\y) at (0,\y) {$\bullet$};
       }
        \foreach \y in {2,4,...,16} {
         \node[blue]  (m\y) at (0,\y) {$\bullet$}; 
       }
       \node (l2) at (-1,2) {$E_1$};
       \node (l4) at (-1,4) {$E_2$};
       \node (l6) at (-1,6) {$\vdots$};
       \node (l8) at (-1,8) {$E_k$};
       \node (l10) at (-1,10) {$\sigma B$};
       \node (l12) at (-1,12) {$\sigma C_k$};       
       \node (l14) at (-1,14) {$\vdots$};       
       \node (l16) at (-1,16) {$\sigma C_1$};
              \node (r2) at (1,2) {$C_1$};
       \node (r4) at (1,4) {$C_2$};
       \node (r6) at (1,6) {$\vdots$};
       \node (r8) at (1,8) {$C_k$};
       \node (r10) at (1,10) {$ B$};
       \node (r12) at (1,12) {$\sigma E_k$};       
       \node (r14) at (1,14) {$\vdots$};       
       \node (r16) at (1,16) {$\sigma E_1$};
       \node[red] (m0) at (0,0) {$a_1$};
       \node[red]  (m18) at (0,18) {$\sigma a_1$};
       \foreach \y in {1,2,...,17} {
         \draw (l\y) -- (m\y);
         \draw (m\y) -- (r\y); }
       \foreach \y in {1,2,...,16} {
         \pgfmathtruncatemacro{\lp}{\y+1};
         \draw (l\y) -- (l\lp); \draw (m\y) -- (m\lp); \draw (r\y) --
         (r\lp); }
       \draw (m0) -- (m1);
       \draw (m17) -- (m18);
       \node (blah) at (0,19) {{\color{white} $aaa$}};
       \draw[draw=white, fill=white] (-.9,9.2) rectangle (.9,10.8);

     \end{tikzpicture}
   \end{figure}

\subsection{The pair embedding}

Now we focus on $D = D_k$ with $k \ge 1$ as above.

Recall the full subgraphs $\Delta_D(D^-) \subset \Delta(D^-)$ and
$\Delta_{D}(D^+) \subset \Delta(D^+)$ defined in \S \ref{sec:pair}:
they consists of facets which occur as the intersection of any face of
$F$ with the bottom (resp. top) drum skin.
We would like to describe $\Delta_{D}(D^-)$ and
$\Delta_{D}(D^+)$. This is
tricky, but it turns out that we can describe a graph which is (potentially)
slightly bigger than $\Delta_{D}(D^-)$, but is small enough to
estimate distances.

The bottom and top drum skins are isomorphic (via $\tau$). However,
there are many possible isomorphisms. We make the fixed and arbitrary
choice to use $\tau$ to identify the top and bottom drum skins. We
will use superscript to indicate application of $\tau$. This applies
to any face of $D_k^+$. So for example
\[
a_1^- = \tau(a_1) \quad \text{and} \quad C_i^- = \tau(C_i).
  \]

  \begin{remark}
    This identification of vertices in the top and bottom drum skin is
    achieved in the colab \cite{colab} \verb|permute_axes()|. Be
    careful that one should only use this function to identify
    vertices in the top of drum skin with those in the bottom, and not
    vice versa! The
    identification of facets in the top and bottom drum skin is
    achieved using the dictionary  \verb|top_to_bottom_dict|. \end{remark}

  Below a crucial role will be played by $\sigma(a_1)$ (the
  ``north pole'' in Figure~\ref{fig:G}). To simplify notation we
  denote this point by $n$:
  \[
n = \sigma(a_1) = (0,100,0,0,1).
\]
Similarly
\[
  n^- = \tau(n) = (0,0,0,100,-1)
\]
  is the corresponding point in the bottom
drum skin.  We have:

  \begin{thm} \label{thm:pair}
    The facet-vertex map is given on the top drum skin as follows:
    \begin{align*}
      \phi^+(B) = a_1^-, \quad
      \phi^+(C_i) = n^- \quad \text{and} \quad
      \phi^+(E_i) =  n^-.
    \end{align*}
    On the bottom drum skin it is given by
    \begin{align*}
      \phi^-(B^-) =  n, \quad 
      \phi^-(C_i^-) = a_1 \quad \text{and} \quad
      \phi^-(E^-_i) =  a_1 
    \end{align*}
  \end{thm}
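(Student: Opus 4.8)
The plan is to compute the three values of $\phi^+$ directly from Lemma~\ref{lem:minimum} and then to deduce the three values of $\phi^-$ for free from the symmetry $\tau$. For the second step, note that $D=D_k$ is $\Gamma$-invariant and that $\tau$ interchanges the two drum skins: so if $F$ is a facet of $D^+$ and $A\neq D^+$ is the unique facet of $D$ containing $F$ (with $A\cap D^-=\{\phi^+(F)\}$), then $\tau(A)$ is the unique facet $\neq D^-$ of $D$ containing $\tau(F)$ and $\tau(A)\cap D^+=\{\tau(\phi^+(F))\}$, i.e.\ $\phi^-(\tau(F))=\tau(\phi^+(F))$. Using $\tau^2=\sigma$, $\sigma^2=\mathrm{id}$ and $n=\sigma(a_1)$, this turns $\phi^+(B)=a_1^-$ into $\phi^-(B^-)=\tau(a_1^-)=\tau^2(a_1)=n$, and turns $\phi^+(C_i)=\phi^+(E_i)=n^-$ into $\phi^-(C_i^-)=\phi^-(E_i^-)=\tau(n^-)=\sigma(n)=a_1$. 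So everything reduces to the three identities for $\phi^+$, and for those Lemma~\ref{lem:minimum} reduces us to the following: regarding $D^+$ and $D^-$ inside $\RM^4$ via \eqref{eq:embed}, take the defining functionals $f_B,f_{C_i},f_{E_i}$ written down in the proof of Proposition~\ref{prop:top_skin}, and for each locate the vertex of $D^-$ at which it is minimal.

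First the data, then the two easy cases. Under \eqref{eq:embed}, $\tau$ acts on $\RM^4$ by $v\mapsto(v_4,v_3,v_1,v_2)$, and the vertices of $D^-$ are the $\Gamma^+$-orbits of $\tau(m_{++})=(3,3,0,0)$, $\tau(p_+)=(0,1,98,0)$ and $\tau(a_j)=(0,0,x_j,y_j)$ for $1\le j\le k+1$, where $\Gamma^+$ acts by sign changes and the swap $\sigma=(12)(34)$. The decisive observation is that these orbits have disjoint supports: every vertex of $D^-$ of ``$a$-type'' has its first two coordinates zero, every ``$m$-type'' vertex has its last two coordinates zero, the ``$p$-type'' vertices are the $(0,\pm1,\pm98,0)$ and $(\pm1,0,0,\pm98)$, and no coordinate of any vertex of $D^-$ exceeds $100$ in absolute value. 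For $B$, the functional $f_B$ has linear part $(-1,-\tfrac{24}{25},-49,0)$, so minimising $f_B$ over the vertices of $D^-$ is maximising $x_1+\tfrac{24}{25}x_2+49x_3$; the coefficient $49$ dominates (the contribution of $x_1,x_2$ is at most $\tfrac{49}{25}\cdot100$, while $x_3=100$ is attained only at $\tau(a_1)$, where $x_1=x_2=0$, by \eqref{eq:first quadrant assumption}), so the minimiser is $\tau(a_1)=a_1^-$. For $E_i$, the conditions $f_2(m_{++})=f_2(m_{-+})=-1$ force $f_2=(0,-\tfrac13)$, so $f_{E_i}$ has linear part $(-1/u_i,-1/v_i,0,-\tfrac13)$ and minimising it is maximising $\tfrac{x_1}{u_i}+\tfrac{x_2}{v_i}+\tfrac13 x_4$; the minimiser is the vertex with $x_4$ maximal, which is $n^-=(0,0,0,100)=\tau(\sigma(a_1))$, since every $a$-type vertex has $x_4\le x_j<100$ for $j\ge2$, and the $p$-type vertices $(\pm1,0,0,\pm98)$ are beaten by $n^-$ by the explicit positive margin $\tfrac23-\tfrac1{u_i}$ (using $u_i\ge100$).

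The genuine work, and the step I expect to be the only real obstacle, is $C_i$. Here $f_{C_i}=f_1+f_2+1$ with $f_1=(-1/u_i,-1/v_i)$ on the first two coordinates and $f_2=(\delta_2,-\tfrac13-\delta_2)$ on the last two, where $\delta_2=-1+98/u_i$; by \eqref{eq:ASSUMP} and the computations in the proof of Proposition~\ref{prop:top_skin}, $\delta_2$ is confined to a narrow interval just below $0$ (roughly $[-\tfrac1{25},-\tfrac1{50}]$). Minimising $f_{C_i}$ is maximising $\tfrac{x_1}{u_i}+\tfrac{x_2}{v_i}+|\delta_2|x_3+(\tfrac13+\delta_2)x_4$. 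By the disjoint-support description, the $m$-type vertices and the $(0,\pm1,\pm98,0)$ $p$-type vertices give values far below $(\tfrac13+\delta_2)\cdot100$, the $(\pm1,0,0,\pm98)$ vertices are beaten by $n^-$ exactly as in the $E_i$ case, and among the $a$-type vertices the best candidate attached to index $j$ is $\tau(\sigma(a_j))=(0,0,y_j,x_j)$, with value $|\delta_2|y_j+(\tfrac13+\delta_2)x_j$. Since $j=1$ gives exactly $n^-$ (as $y_1=0$, $x_1=100$), the identity $\phi^+(C_i)=n^-$ comes down to showing $|\delta_2|\,y_j<(\tfrac13+\delta_2)(100-x_j)$ for $2\le j\le k+1$, a quantitative statement that the convex chain $a_1,\dots,a_{k+1}$ does not bulge away from $a_1$ too fast. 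I would derive it from \eqref{eq:convex assump} and \eqref{eq:first quadrant assumption}--\eqref{eq:first quadrant assumption2} (so $75\le x_j\le100$, $0\le y_j\le75$) together with \eqref{eq:ASSUMP} and \eqref{eq:ASSUMPy}, which simultaneously pin $\delta_2$ to the interval above and bound the position of $a_j$ via the constraints on the $x$- and $y$-intercepts of the line through $a_{j-1}$ and $a_j$; the inequality then becomes a one-variable estimate in $100-x_j$. Controlling all the $a$-type competitors uniformly is the delicate point; granting it, $\phi^+(C_i)=n^-$, and the three identities for $\phi^-$ follow from the first paragraph.
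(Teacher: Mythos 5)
Your overall route is the same as the paper's: compute $\phi^+$ on $B$, $C_i$, $E_i$ via Lemma~\ref{lem:minimum} using the defining functionals constructed in the proof of Proposition~\ref{prop:top_skin}, then transport everything to the bottom skin by $\tau$-equivariance. The symmetry step and the facets $B$ and $E_i$ are handled correctly (in the $B$ case your parenthetical bound ``contribution of $x_1,x_2$ at most $\tfrac{49}{25}\cdot 100$'' is by itself too weak against a vertex with $x_3=98$, since $196+49\cdot 98>4900$; but your disjoint-support description of the vertices of $D^-$ repairs this immediately, because the only vertices with $x_3=98$ have first two coordinates $0,\pm1$). The issue is the $C_i$ case, which you yourself flag and then leave conditional: you correctly reduce $\phi^+(C_i)=n^-$ to the inequality $|\delta_2|\,y_j<\bigl(\tfrac13+\delta_2\bigr)(100-x_j)$ for $2\le j\le k+1$, but you do not prove it, and ``granting it'' is exactly where the content lies. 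This is a genuine gap, not a routine verification: the inequality does \emph{not} follow from \eqref{eq:first quadrant assumption}--\eqref{eq:first quadrant assumption2}, \eqref{eq:convex assump}, \eqref{eq:ASSUMP} and \eqref{eq:ASSUMPy} alone. For instance, take $k=2$ and $a_2=(99,\tfrac{25}{3})$ on the line through $(75,75)$ and $(102,0)$; all the stated hypotheses hold ($u_1=100$, $u_2=102$, convex position), yet for the facet $C_2$ one has $\delta_2=-\tfrac{2}{51}$, so the admissible ratio is $\bigl(\tfrac13+\delta_2\bigr)/|\delta_2|=7.5$, while $y_2/(100-x_2)=\tfrac{25}{3}>7.5$; the defining functional is then strictly smaller at $(0,0,y_2,x_2)$ than at $n^-$, so your target inequality fails for an admissible configuration. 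Hence the one-variable estimate you propose to extract from the intercept conditions cannot exist at this level of generality; one needs either a further restriction on the $a_j$ (e.g.\ a bound of the shape $y_j\le 7(100-x_j)$, which does hold when the $a_j$ stay close to the segment from $(100,0)$ to $(75,75)$, whose ratio is $3$) or a concrete choice of the points.

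To be fair, you have put your finger on precisely the step that the paper's own proof dispatches in one sentence (``From the form of $f_2$ it follows that amongst the $\Gamma^+$ orbits of $\{a_i^-\}$, $f$ takes its minimum value at $n^-$''), and your reduction makes explicit what that sentence is really asserting. Everything else in your proposal---the $m$-type and $p$-type comparisons for $C_i$, with the margin $2(\tfrac13+\delta_2)-\tfrac1{u_i}>0$, and the observation that among the $a$-type competitors the worst one is $(0,0,y_j,x_j)$---matches the paper and is correct. But as written your argument for Theorem~\ref{thm:pair} is incomplete at the $a$-type comparison, and the fix is not the derivation you sketch; it requires strengthening (or specializing) the hypotheses on the points $a_2,\dots,a_k$.
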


  \begin{remark}
    If we had to try to isolate one key fact that makes our examples
    work, it would be Theorem \ref{thm:pair}.
  \end{remark}

\begin{remark} \change{ \label{rem:doublethm}
  As pointed out by a referee, the ``double pancake'' structure remarked
upon in Remark \ref{rem:double pancake} goes some way to explaining the simple form of
the facet vertex map in Theorem \ref{thm:pair}. Indeed, consider a polytope
$_\e D_k^+$ obtained by multiplying the first two coordinates by
$\e$, where $\e$ is roughly $1/50$. Then $_\e D_k^+$ is
combinatorially equivalent to $D_k^+$, and the coordinates of its
vertices are all of a similar order of magnitude. Thus, we expect the
directions of the normals to facets to be reasonably well-distributed
on the $3$-sphere. However, as $\e \to 1$ the first two coordinates
are scaled rather drastically, and these normal
vectors approach the unit circle in the $3$-sphere spanned by the
last two coordinates. This explains why only the ``extremal'' vertices
occur in the image of the facet vertex map in Theorem \ref{thm:pair}. This
scaling technique was observed (and used for a similar purpose) in
\cite[Lemma 2.13]{MSW}.}
\end{remark}

  \begin{remark}
    The reader wishing to witness this little miracle in the colab \cite{colab}
    should see the cells labelled \verb|Facet-vertex map for top facets|
    for $\phi^+(B),  \phi^+(C_i)$ and $\phi^+(E_i)$ and
    \verb|Facet-vertex map for bottom facets| for $\phi^-(B^-),
    \phi^-(C_i^-)$ and $\phi^-(E_i^-)$. 
  \end{remark}

  \begin{proof} We will use knowledge obtained in the course of the
    proof of Proposition \ref{prop:top_skin}. As in that proof, we
    proceed facet by facet.

   We will make essential use of the method outlined in \S\ref{sec:fv} to
   determine the facet-vertex map.

   \emph{Facet $B$:} We have in the proof of Proposition
   \ref{prop:top_skin} that
  \[f_B =  (-1, -\frac{24}{25}, -49, 0, 3 \cdot 49) \]
yields a functional which cuts out $B$ from our top drum skin. By
Lemma~\ref{lem:minimum}, in order to determine $\phi^+(B)$, we only need to determine the vertex $v$ in
the bottom drum skin on which $f_B$ takes its minimum. By inspection,
this occurs at
\[
v = (0, 0, 100, 0).
\]
That is, $\phi^+(B) = a_1^-$ as claimed.

   \emph{Facet $E_i$:} As in the proof of Proposition
   \ref{prop:top_skin} for $E_i$, let us split $\RM^4 = \RM^2 \oplus
   \RM^2$ and consider the same functional $f = f_1 + f_2 + 1$ as
   there. Note that
\[
f_1((75,75)) \ge -1
\]
and hence (using that $f_1$ is $-1$ on $a_i$ and $a_{i+1}$) that
\begin{align} \label{eq:25}
f_1(v) &\ge -1/25 \quad \text{for $v = (\pm 3, \pm 3)$, $(\pm 1, 0)$ or $(0, \pm
    1)$} \\
  f_1((1,0))   &\ge -1/75. \label{eq:75}
\end{align}
Also, we have $f_2((c,d)) = -\frac{1}{3} d$. Using this explicit form
of $f_2$ and \eqref{eq:25} it is clear that $f$ takes its minimum
value at either
\[
(0,0,0,100) \quad \text{or} \quad (1,0,0,98).
\]
Now $-100/3 < -98/3 - 1/75$ and hence $f$ takes its minimum of the
first point, i.e. $n^-$. Thus $\phi^+(E_i) = n^-$ as claimed.

   \emph{Facet $C_i$:} Again, we make crucial use of knowledge gleaned in the course of the
   proof of Proposition \ref{prop:top_skin}. We decompose $f = f_1 +
   f_2 + 1$ as in Proposition \ref{prop:top_skin}. We saw there that
   \begin{equation}
  \label{eq:f2b}
  f_2 = (\delta_2, -\frac{1}{3}- \delta_2).
\end{equation}
where $\delta_2$ satisfies
\[
-0.04 < \delta_2 \le -0.02 
\]
From the form of $f_2$ it follows that
amongst the $\Gamma^+$ orbits of $\{ a_i^- \}$, $f$ takes its minimum
value at $n^-$. It is also clear that amongst the $\Gamma^+$-orbits of
$p_+^-$, $f$ takes its minimum value at $(1,0,0,98)$. Finally, we
claim that
\[
f((0,0,0,100)) < f((1,0,0,98)).
\]
Expanding and rearranging, we see that this is true if and only if
we have the inequality:
\[
  2\cdot f_2((0,1)) < f_1((1,0)).
\]
By \eqref{eq:ASSUMP} we have $f_1((1,0)) > -\frac{1}{100}$, and by
\eqref{eq:f2b} we have $f_2((0,1))  = -\frac{1}{3}- \delta_2$. Thus
the inequality holds, and we have $\phi^+(C_i) = n^-$ as claimed.

This completes the proof of Theorem~\ref{thm:pair} for facets $B$, $C_i$ and $E_i$ in the
top drum skin.

For those in the bottom drum skin we employ a symmetry
argument. Because $\phi^+(B) = a_1^-$ there exists a facet $F$ of the
drum $D_k$ incident to both $B$ and $a_1^-$. Acting by $\tau$ we see
that there is a facet $\tau(F)$, incident to both $B^-$ and
$\tau(a_1^-) = \tau^2(a_1) = \sigma(a_1) = n$. This proves that
$\phi^-(B^-) = n$ as claimed. The statements $\phi^-(C^-_i) =
\phi^-(E^-_i) =\tau(n^-) = \tau^2(n) = \sigma(n) = a_1$ follows by the same
argument. The theorem is proved. 
\end{proof}

In order to establish our main theorem, we need to know a little more
about the pair embedding:

  \begin{thm} \label{thm:pair2} Only boundary edges occur in the image
    of the pair embedding. More precisely, suppose that $E$ is an edge of $D_k^-$ and that $\rho(F) = (X,E)$ for
some facet $F$ of $D_k$. Then $E$ belongs to the $\Gamma^+$-orbit of
$\{ a_i^-, a_{i+1}^- \}$ for some $i$.
\end{thm}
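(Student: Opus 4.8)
The plan is to combine the facet–vertex computation of Theorem~\ref{thm:pair} with Corollary~\ref{eq:ridges}. By Theorem~\ref{thm:pair}, the facet–vertex map $\phi^-$ on the bottom drum skin takes only two values: $\phi^-(B^-) = n$, while $\phi^-(C_i^-) = \phi^-(E_i^-) = a_1$ for all $i$. Translating via $\tau$ the combinatorial description of Proposition~\ref{prop:top_skin}, every facet of $D_k^-$ is $\Gamma^+$-conjugate to $B^-$, $C_i^-$ or $E_i^-$, so $\phi^-$ takes only the values in the $\Gamma^+$-orbits of $n$ and $a_1$. The key point is that almost all adjacent pairs of facets in $D_k^-$ have the \emph{same} image under $\phi^-$, and then Corollary~\ref{eq:ridges} forbids the common ridge from occurring in the image of the pair embedding.

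First I would set up the symmetry reduction: since $\rho$ is $\Gamma$-equivariant (the pair embedding commutes with the $\Gamma$-action on $D_k$, which descends to $\Gamma^+$ on each drum skin), it suffices to show that an edge $E$ of $D_k^-$ occurring as the second coordinate of some $\rho(F)$ must be $\Gamma^+$-conjugate to some $\{a_i^-, a_{i+1}^-\}$. Next I would enumerate, using the incidence graph \eqref{eq:full_incidence} transported by $\tau$, all edges of $D_k^-$ up to $\Gamma^+$-conjugacy: each edge of the simplicial polytope $D_k^-$ lies in exactly two facets (its two adjacent facets in the facet-ridge graph meet in a ridge of $D_k^-$, which is a $2$-face containing $E$; wait — more carefully, one uses that in a simplicial $4$-polytope each edge is contained in several facets), so I would instead list edges directly from the vertex sets of the facets $B^-$, $C_i^-$, $E_i^-$ and their conjugates. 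The edges split into those of the form $\{a_i^-, a_{i+1}^-\}$ (and $\Gamma^+$-orbits) — which we want to allow — and all the rest, which involve at least one vertex in the $\Gamma^+$-orbit of $m_{\pm\pm}^-$ or $p_\pm^-$.

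For each edge $E$ \emph{not} of the allowed form, I would exhibit two facets $F_1, F_2$ of $D_k^-$ that share a ridge $R \supseteq E$ with $\phi^-(F_1) = \phi^-(F_2)$; then Corollary~\ref{eq:ridges} shows $R \notin \im \rho$, and since any facet $F$ of $D_k$ with $\rho(F) = (X, E)$ would have $E = F_\bot$ a face of $F_\bot$... hmm, one needs $E \subseteq F_\bot$, and $F_\bot$ is then a face of $D_k^-$ containing $E$; if $E \in \im(\text{second projection of }\rho)$ then some such $F_\bot$ exists, and one argues that $F_\bot$ would have to contain one of the forbidden ridges. Concretely: the troublesome edges all contain an $m^-$ or $p^-$ vertex, and the two facets on either side (e.g. $C_i^-$ and $E_i^-$ meeting along the $2$-face $\{m_{++}^-, a_i^-, a_{i+1}^-\}$, or $C_i^-$ and $\sigma C_i^-$-type conjugates, or $B^-$ and $\e_2 B^-$ meeting along $\{m_{++}^-, m_{+-}^-, p_+^-\}$) map to the same vertex under $\phi^-$ because $\phi^-$ is constant ($= a_1$) on all the $C$'s and $E$'s, and constant ($=n$) on the $B$'s. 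The only adjacencies where $\phi^-$ jumps are $B^- \leftrightarrow C_k^-$ (image $n$ vs $a_1$) and $\sigma B^- \leftrightarrow E_k^-$, and one checks by hand that the corresponding ridges are $\{m_{++}^-, p_+^-, a_k^-\}$-type faces which contain no allowed edge anyway, so no edge of the allowed orbit is endangered and no forbidden edge survives.

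The main obstacle I expect is the bookkeeping in the last step: carefully listing all $\Gamma^+$-orbits of edges of $D_k^-$, and for each forbidden edge producing an explicit adjacent pair of facets with equal $\phi^-$-value \emph{and} containing that edge in their common ridge. The subtlety is that an edge can lie in more than two facets, so "the two facets containing the ridge" must be chosen as the two facets meeting along a specific $2$-face $R \supseteq E$; one must make sure such an $R$ exists with both its facets mapping to the same $\phi^-$-value. Given the explicit incidence graph \eqref{eq:full_incidence} (transported by $\tau$) and the very rigid form of $\phi^-$ from Theorem~\ref{thm:pair}, this is a finite check, but it is the part requiring genuine care rather than a routine calculation.
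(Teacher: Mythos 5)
Your argument has a structural gap that comes from pairing the wrong dimensions. If an edge $E$ of $D_k^-$ occurs as a coordinate of $\rho(F)$, then the other coordinate $F_\top$ is a \emph{ridge of $D_k^+$} (dimensions must sum to $3$). So the relevant application of Corollary \ref{eq:ridges} is to ridges of $D_k^+$ and the map $\phi^+$: a ridge $R$ of $D_k^+$ whose two adjacent facets have equal $\phi^+$-value cannot be $F_\top$ for any facet $F$, and hence cannot pair with any edge of $D^-$. You instead apply the corollary to ridges of $D_k^-$ containing $E$ together with $\phi^-$. That only rules out certain $2$-faces of $D^-$ from being $F_\bot$; it says nothing about whether the edge $E$ itself is $F_\bot$ (note $F_\bot=E$ is one-dimensional, so it cannot ``contain one of the forbidden ridges'', and there is no closure property forcing a ridge of $D^-$ above $E$ into the image whenever $E$ is). This is exactly the step where your sketch trails off (``one argues that $F_\bot$ would have to contain \dots''), and it does not go through.

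The second, and in fact the main, missing piece is what happens at the ridges of $D_k^+$ where $\phi^+$ \emph{does} jump: up to $\Gamma^+$ these are $B\cap C_k$, $B\cap \e_4C_k$ and $C_i\cap\e_4C_i$ for every $i$ (your list of jumping adjacencies is also incomplete). Corollary \ref{eq:ridges} is silent there, and these ridges genuinely do occur in the image of the pair embedding, paired with edges of $D^-$. The content of the theorem for these cases is established in the paper by Lemma \ref{lem:pair prog}: one parametrizes all functionals vanishing on such a ridge and nonpositive on $D_k^+$, derives explicit bounds on their coefficients (this is where the critical assumptions \eqref{eq:ASSUMP} and \eqref{eq:ASSUMPy} enter), and shows their maxima on $D_k^-$ are attained only in the $\Gamma^+$-orbit of the $a_i^-$, so the paired face can only be a boundary edge. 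Your proposal replaces this with the assertion that the surviving ridges ``contain no allowed edge anyway, so no forbidden edge survives'', which answers a different question (what the ridge contains, rather than which edges of the \emph{opposite} skin it pairs with) and skips the quantitative argument that carries the proof.
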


The next section is devoted to the proof.

\subsection{The pair embedding II}
Consider the incidence graph of facets incident to the fundamental
domain in \eqref{eq:full_incidence}. Using Theorem \ref{thm:pair} and
the fact that the facet-vertex map is equivariant one computes easily
that the images of the facets displayed in \eqref{eq:full_incidence}
are given as follows:
\begin{equation} \label{eq:full_incidence_fvmap}
  \begin{array}{c}
\begin{tikzpicture}[scale=1]
  \node[red] (e2b) at (4,5) {$\e_2B$};
  \node[blue] (ee) at (0,4) {$E_k$};
  \node[blue] (ek-1) at (0,3) {$E_{k-1}$};
  \node (edots) at (0,2) {$\vdots$};
  \node[blue] (e2) at (0,1) {$E_{2}$};
  \node[blue] (e1) at (0,0) {$E_{1}$};
    \node[blue] (fe) at (2,4) {$C_k$};
  \node[blue] (fk-1) at (2,3) {$C_{k-1}$};
  \node (fdots) at (2,2) {$\vdots$};
  \node[blue] (f2) at (2,1) {$C_{2}$};
  \node[blue] (f1) at (2,0) {$C_{1}$};
  \node[red] (b) at (2,5) {$B$};
  \node[red] (see) at (2,6) {$\sigma E_k$};
  \node[blue] (sb) at (0,5) {$\sigma B$};
  \draw (ee) -- (fe);
  \draw (ek-1) -- (fk-1);
  \draw (e2) -- (f2);
  \draw (e1) -- (f1);
  \draw (b) to node {$\bullet$} (fe);
  \draw (sb) -- (ee);
  \draw(ee) -- (ek-1);
  \draw(ek-1) -- (edots);
  \draw(edots) -- (e2);
  \draw(e2) -- (e1);
    \draw(fe) -- (fk-1);
  \draw(fk-1) -- (fdots);
  \draw(fdots) -- (f2);
  \draw(f2) -- (f1);
  \node[blue] (epse) at (-2,4) {$\epsilon_3C_k$};
  \node[blue] (epsk-1) at (-2,3) {$\epsilon_3C_{k-1}$};
  \node[blue] (epse2) at (-2,1) {$\epsilon_3C_2$}; 
  \node[blue] (epse1) at (-2,0) {$\epsilon_3C_1$};
    \node[pink] (epsce) at (4,4) {$\epsilon_4C_k$};
  \node[pink] (epsck-1) at (4,3) {$\epsilon_4C_{k-1}$};
  \node[pink] (epsc2) at (4,1) {$\epsilon_4C_2$}; 
  \node[pink] (epsc1) at (4,0) {$\epsilon_4C_1$};
    \node[blue] (bote) at (0,-1) {$\epsilon_2E_{1}$};
    \node[blue] (botf) at (2,-1) {$\epsilon_2C_{1}$};
    \draw (epse) -- (ee);
    \draw (epsk-1) -- (ek-1);
    \draw (epse2) -- (e2);
    \draw (epse1) -- (e1);
    \draw (epsce) to node {$\bullet$} (fe);
    \draw (epsck-1) to node {$\bullet$} (fk-1);
    \draw (epsc2) to node {$\bullet$} (f2);
    \draw (epsc1) to node {$\bullet$} (f1);
    \draw (f1) -- (botf);
    \draw (e1) -- (bote);
    \draw (b) to (see);
    \draw (b) to (e2b);
    \draw (b) to node {$\bullet$} (epsce);
    \node at (7,4) {\small $\text{{\color{blue}blue} nodes} \mapsto n^-$};
  \node at (7,3) {\small $\text{{\color{red}red} nodes}\mapsto a_1^-$};
    \node at (7,2) {\small $\text{{\color{pink}pink} nodes}\mapsto
      \e_4(n^-)$};
\end{tikzpicture} \end{array}
\end{equation}
More precisely, the images of $B, C_i$ and $E_i$ are read off
directly from Theorem \ref{thm:pair}. To compute the remaining images
one uses equivariance. For example, $C_k$ maps to $n^-$ by Theorem
\ref{thm:pair} and hence $\e_4C_4$ maps to $e_4(n^-)$.

Recall that edges in the above graph correspond to certain ridges in
$D_k^+$. We have indicated edges where the facet-vertex map takes
different values at the facets corresponding to the end points by adding a bullet to the edge. We
can then apply Corollary \ref{eq:ridges}  
to deduce that we only need to determine the facet-ridge map for
facets in the full drum which involve these ridges. 

We handle these ridges one at a time:

\emph{Ridge $R = B \cap C_k$:} 

We apply Lemma \ref{lem:pair prog} with a minor
variation. We determine all functionals which are zero on $R$ and
$< 0$ on $D_k^+ - R$ and we compute their maxima on $D_k^-$. (We have
switched $\ge 0 \leftrightarrow \; \le 0$ and maxima $\leftrightarrow$
minima with respect to Lemma \ref{lem:pair prog}. This simplifies some of the
constants below.)  It is enough to show that all maxima are in the
$\Gamma^+$-orbit of $a_i^-$. (\change{We use that the only edges
involving only the $a_i^-$  are $\Gamma^+$-orbits of edges of the form
$(a_i^-,a_{i+1}^-)$. This is an easy consequence of symmetry and
Proposition \ref{prop:top_skin}.})

Consider the functional given by
\[
f = (x_1, x_2, x_3, x_4, x_5) \quad \text{i.e. $f((v_1,\dots,v_4)) = \sum
  x_iv_i + v_5$.}
\]
We now determine the conditions for $f$ to vanish on our ridge $R$ and be
$\le 0$ on $D_k^+$. Certainly, $x_5 \ne 0$ as $D_k^+$ contains $0$ in
its interior. Hence we may assume that
\begin{equation}
  \label{eq:x5}
  x_5 = -150.
\end{equation}
Now $f$ vanishes at $a_{k+1}$ and is $\le 0$ on the $\Gamma^+$-orbit
of the $a_i$. Writing out what that means for $a_{k+1}$ and the
$\Gamma^+$ orbit of $a_1$ we deduce the inequalities
\begin{equation}
  \label{eq:x1x2}
  x_1 + x_2 = 2 \quad \text{and} \quad \frac{1}{2} \le x_i \le
  \frac{3}{2} \text{ for $i = 1,2$.}
\end{equation}
Because $f$ vanishes at $m_{++}$ and is $\le 0$ on $m_{\pm\pm}$ we
deduce that
\begin{equation}
  \label{eq:x3x4}
  x_3 + x_4 = 50, \quad x_3, x_4 \ge 0.
\end{equation}
Finally, because $f$ takes the value
$0$ at $p_+$ and is $< 0$ at $\sigma(p_+)$ we deduce that
\begin{equation}
  \label{eq:x1x3}
  x_3 = 150 - 98x_1 \quad \text{and} \quad
  x_4 < 150 - 98x_2.
\end{equation}
Combining \eqref{eq:x1x3}, \eqref{eq:x3x4} and \eqref{eq:x1x2} we deduce that
\begin{equation} \label{eq:x1x2x3x4}
  \frac{50}{49} \le x_1 \le \frac{3}{2},   \quad \frac{1}{2} \le x_2
  \le \frac{48}{49},  \quad \quad 3 \le x_3 \le 50.
\end{equation}

It remains to compute the maximum of $f$ on $D_k^-$.

We will now show that $f$ obtains its maximum on the $\Gamma^+$-orbit
of some $a^-_i$. In order to do this, it is enough to show that $f$
\change{obtains its maximum} neither on the $\Gamma^+$-orbit of
$m_{\pm\pm}^-$ nor the orbit of $p_{\pm }^-$. Firstly note that on
each orbit it takes its maximum value on vectors all of whose
coordinates are positive, i.e. $m_{++}^-$ or $p_+^-$ and
$\sigma(p_+^-)$ respectively.

Now, at $m_{++}^-$, $f$ takes the value $3(x_1 + x_2) = 6$, which is much less
than its value at $a_k = 50 \cdot 75$.

It is slightly more subtle to see the statement for the $p_+^-$ and
$\sigma(p_+^-)$. Firstly, suppose that $x_3 \ge x_4$. Then $x_3$ is at
least 25 by \eqref{eq:x3x4} and hence
\[
f(\sigma(p_+)^-) \le f(p_+^-) = x_2 + 98x_3 < 100x_3 = f(a_1^-).
\]
Hence in this case neither $f(p_+^-)$ nor $f(\sigma(p_+)^-)$ is maximal.
On the other hand, if $x_3 \le x_4$ then $x_4$ is at least 25 and
\[
f(p_+^-) \le f(\sigma(p_+)^-) = x_1 + 98x_4 < 100x_4 = f(n^-).
\]
Hence in this case, again neither $f(p_+^-)$ nor $f(\sigma(p_+)^-)$ is
maximal. This completes the proof.


\emph{Ridge $R = B \cap \e_4C_k$:} We can handle this by symmetry: $\e_4$
fixes $B$ and sends $C_k$ to $\e_4C_k$. Thus this case follows from
the previous one, as $\e_4$ permutes boundary edges.

\emph{Ridge $R = C_i \cap \e_4C_i$:} The ridge $R$ has vertices $a_i, a_{i+1}$ and
$p_+$. We use the same method as in the first case handled above:
first we write down conditions satisfied by all functionals which are
$=0$ on $R$ and $<0$ on $D^+_k - R$, and then show that the maxima on
$D_k^-$ is contained in a boundary vertex or edge.

As before consider the functional
\[
f = (x_1, x_2, x_3, x_4, x_5) \quad \text{i.e. $f((v_1,\dots,v_4)) = \sum
  x_iv_i + v_5$.}
\]
and assume that $f$ is $=0$ on $R$ and $<0$ on $D^+_k- R$. Certainly $x_5
\ne 0$ as $0$ is in the interior of $D_k^+$ and we can (and do) assume
that
\begin{equation}
  \label{eq:x5}
  x_5 = -102.
\end{equation}
The equation $x_1u + x_2v - 102$ (the restriction of our functional to
the span of the first two coordinates) determines a functional which
vanishes on the two points $a_i$ and $a_{i+1}$. We deduce that the
$u$- (resp. $v$-) intercept is given by
\[
u = 102/ x_1 \quad \text{resp. $v = 102/x_2$}.
\]
Our assumptions \eqref{eq:ASSUMP} and \eqref{eq:ASSUMPy} then imply
that
\begin{equation}
  \label{eq:x1}
  1 \le x_1 \le \frac{102}{100} = 1.02
\end{equation}
and
\begin{equation}
  \label{eq:x2}
  0 < x_2 < 102/283 \approx 0.36.
\end{equation}
Because our function vanishes on $p_+$ we have
\begin{equation}
  \label{eq:x3}
  x_3 = 102 - 98x_1 \quad \text{and thus} \quad 2.04 < x_3 \le 4.
\end{equation}

We already have enough to deduce what we need: \eqref{eq:x1},
\eqref{eq:x2} and \eqref{eq:x3} are enough to deduce that amongst the
points $m_{\pm,\pm}^-$, $p_{\pm}^-$ and $a_i^-$ our functional $f$
obtains its maximum at the orbit of one of the $a_i^-$, which is what
we were hoping to prove.

\begin{remark}
  With a little more care, one can establish that in the case of the
  ridge $B \cap C_k$, all points
\[
a^-_1, a^-_2,
\dots, a^-_{k-1}, a^-_k, \sigma(a^-_k), \dots, \sigma(a^-_2),
\sigma(a^-_1)
\]
can be obtained as maxima of a suitable $f$.

Similarly, one can also see that in case of the ridge $R = B \cap
\e_4C_i$ all points
\[
-\sigma(a^-_1), -\sigma(a^-_2), \dots, -a_2^-, a^-_1, a^-_2,
\dots, a^-_{k-1}, a^-_k, \sigma(a^-_k), \dots, \sigma(a^-_2),
\sigma(a^-_1)
\]
can be obtained as maxima of a suitable $f$.

One can see an illustration of this phenomenon in the colab \cite{colab} in the
cell: \\ \verb|Edges incident to a fixed ridge in the top drum skin.|
\end{remark}

\subsection{Proof of our main theorem} We now have all
the pieces in place to explain our proof of Theorem \ref{thm:main}.

As explained in \S\ref{sec:pair}, a priori we need to compute the
distance between the top and bottom drum skin in the facet ridge graph
$\FC(D_k)$. Instead, we can compute distances in the trimmed facet
ridge graph $\FC^t(D_k)$. In order to do this we can try to use the
pair embedding
\[
\FC^t(D_k) \into \Delta(D^-) \times \Delta(D^+).
\]
Unfortunately, the bounds we get in this way are not strong enough.

Recall the graphs $\widetilde{\GC}$ and $\GC$ introduced in
\S~\ref{sec:signflips}. We let  $\widetilde{\GC}^{-}$ and $\GC^{-}$
denote their obvious analogues for the bottom drum skins. By Theorems
\ref{thm:pair} and \ref{thm:pair2} the only $0$ and $1$-dimensional
facets in the image of the pair embedding are the $\Gamma$ orbits of
the point $a_1$ and the edges $\{ a_i, a_{i+1} \}$. Thus, by
definition of $\widetilde{\GC}$ and $\widetilde{\GC}^-$ we instead obtain
an embedding
\[
\FC^t(D_k) \into \widetilde{\GC}^{-} \times \widetilde{\GC}.
\]
Quotienting by sign flips we obtain an embedding
\begin{equation}\label{eq:Gsign}
\FC^t(D_k) / \text{sign flips} \into \GC^{-} \times \GC.
\end{equation}

Finally, note that by Corollary \ref{cor:bound} we have the following
bounds on distances in $\GC$:
\begin{align}
  \dist_{\GC} ( B, n) & \ge 2k + 3, \label{eq:b1} \\
  \dist_{\GC} ( B, a_1) & \ge 2k + 3, \label{eq:b2} \\
  \dist_{\GC} ( X, n) & \ge 2k + 3 \quad \text{for any $X \in \{ C_i,
                          E_i\}$}. \label{eq:b3} 
\end{align}
We also have analogues of the above for $\GC^-$, for example
$\dist_{\GC^-} ( B^-, n^-) \ge 2k + 3$. 

\begin{thm}
  $D_k$ has width $\ge 5 + k$.
\end{thm}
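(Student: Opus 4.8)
The plan is to feed the facet--vertex data and the distance bounds already established straight into the width formula. Since $D_k$ has dimension $d=5$, the reformulation \eqref{eq:width2} reduces everything to proving
\[
\dist_{\FC^t(D_k)}(F,F') \ \ge\ k+3
\]
for every facet $F$ of $D_k$ of height $0$ and every facet $F'$ of height $d-2=3$; this then gives $\width(D_k)\ge(k+3)+2=k+5$. So first I would fix such $F$ and $F'$. Combining the embedding \eqref{eq:Gsign} with the argument of Proposition~\ref{prop:pair_bound} (legitimate because, by Theorems~\ref{thm:pair} and \ref{thm:pair2}, the pair embedding lands in $\widetilde{\GC}^-\times\widetilde{\GC}$), and using that the quotient by sign flips does not increase distances while distances in a box product add, I get
\[
\dist_{\FC^t(D_k)}(F,F') \ \ge\ \tfrac12\Bigl(\dist_{\GC^-}\bigl([F_\bot],[F'_\bot]\bigr)+\dist_{\GC}\bigl([F_\top],[F'_\top]\bigr)\Bigr),
\]
where $[\,\cdot\,]$ is the class modulo sign flips, $\rho(F)=(F_\bot,F_\top)$ and $\rho(F')=(F'_\bot,F'_\top)$. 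The whole problem thus becomes: show this sum of two distances is at least $2k+6$.

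Next I would locate the four arguments. Because $\height F=0$, $F_\bot$ is a facet of $D_k^-$ and $F_\top=\phi^-(F_\bot)$ is a vertex; because $\height F'=3$, $F'_\top$ is a facet of $D_k^+$ and $F'_\bot=\phi^+(F'_\top)$ is a vertex. By Proposition~\ref{prop:top_skin}, modulo sign flips $[F'_\top]$ is one of $B,\sigma B,C_i,\sigma C_i,E_i,\sigma E_i$ and $[F_\bot]$ is one of $B^-,\sigma B^-,C_i^-,\sigma C_i^-,E_i^-,\sigma E_i^-$. By Theorem~\ref{thm:pair} and $\Gamma^+$-equivariance of $\phi^\pm$, the two vertices $[F_\top]$ and $[F'_\bot]$ are each one of the ``poles'' ($[a_1],[n]$ in $\GC$, resp.\ $[a_1^-],[n^-]$ in $\GC^-$); precisely, $[F_\top]=[n]$ iff $[F_\bot]\in\{B^-,\sigma C_i^-,\sigma E_i^-\}$ (and $[F_\top]=[a_1]$ otherwise), while $[F'_\bot]=[a_1^-]$ iff $[F'_\top]\in\{B,\sigma C_i,\sigma E_i\}$ (and $[F'_\bot]=[n^-]$ otherwise).

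The crux is then a short case check built on Corollary~\ref{cor:bound} and its $\GC^-$-analogue, which say that a facet node is at distance $\ge 2k+3$ from $n$ unless it lies in $\{\sigma C_i,\sigma E_i\}$, and at distance $\ge 2k+3$ from $a_1$ unless it lies in $\{C_i,E_i\}$. Distinguish cases on $[F_\top]$. If $[F_\top]=[n]$, then $\dist_\GC([F_\top],[F'_\top])\ge 2k+3$ unless $[F'_\top]\in\{\sigma C_i,\sigma E_i\}$; but in that exceptional case $[F'_\bot]=[a_1^-]$ and $[F_\bot]\in\{B^-,\sigma C_i^-,\sigma E_i^-\}$, so the $\GC^-$-analogue of Corollary~\ref{cor:bound} gives $\dist_{\GC^-}([F_\bot],[a_1^-])\ge 2k+3$ instead. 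Symmetrically, if $[F_\top]=[a_1]$, then $\dist_\GC([F_\top],[F'_\top])\ge 2k+3$ unless $[F'_\top]\in\{C_i,E_i\}$, in which case $[F'_\bot]=[n^-]$ and $[F_\bot]\in\{\sigma B^-,C_i^-,E_i^-\}$, so $\dist_{\GC^-}([F_\bot],[n^-])\ge 2k+3$. Hence in every configuration one of the two distances is $\ge 2k+3$. The remaining distance is in each case a distance in $\GC$ (or $\GC^-$) between a facet node and a pole, and inspection of Figure~\ref{fig:G} shows this is always $\ge 3$: a pole has a single neighbour (a $1$-dimensional node) whose neighbours are $2$-dimensional, so the nearest facet nodes (namely $E_1,C_1$, resp.\ $\sigma E_1,\sigma C_1$) sit at distance exactly $3$. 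Therefore the sum is $\ge(2k+3)+3=2k+6$, and the theorem follows.

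The genuine geometric content has all been spent already in Theorems~\ref{thm:pair}, \ref{thm:pair2} and Corollary~\ref{cor:bound}, so the main obstacle I anticipate is purely the bookkeeping in the case check: one must notice that the $\GC$-coordinate alone does \emph{not} suffice, precisely because the ``inner'' facets $C_i,E_i$ (resp.\ $\sigma C_i,\sigma E_i$) hug one of the two poles, and then verify that in exactly those configurations the matching between $\phi^\pm$ and the poles recorded in Theorem~\ref{thm:pair} forces the $\GC^-$-coordinate to be large. Getting that interlock right, together with double-checking the easy lower bound of $3$ on a facet-to-pole distance, is where care will be needed.
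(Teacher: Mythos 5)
Your proposal is correct and follows essentially the same route as the paper: reduce via \eqref{eq:width2}, pass through the sign-flip quotient of the pair embedding into $\GC^-\times\GC$ (justified by Theorems \ref{thm:pair} and \ref{thm:pair2}), pin down the endpoint vertex coordinates with Theorem \ref{thm:pair} and equivariance, and conclude from Corollary \ref{cor:bound} together with the distance-$3$ pole-to-facet bound that one coordinate contributes $\ge 2k+3$ and the other $\ge 3$. The only difference is organizational: you case on $[F_\top]$ and enumerate all endpoint configurations directly, whereas the paper first normalizes $F_0$ by the $\sigma$-symmetry; the underlying interlock between the facet-vertex map and the two poles is identical.
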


In the proof, the facets $B$ and $\{ C_i, E_i \}$ play very different
roles. In order to lighten notation, we use $X^-$ (resp. $X$) to denote
any element of the set $\{ C_i^-, E_i^- \}$ (resp. $\{ C_i, E_i\}$). 

\begin{proof} Our drum $D_k$ has width $\ge 5 + k$ if and only if any
  path $F_0, F_1, \dots, F_\ell$ of facets in the trimmed facet ridge
  graph of $D_k$ has length (see \eqref{eq:width2}):
  \begin{equation}
\ell \ge 3 + k\label{eq:target_bound}
\end{equation}
For the rest of the proof we will use the pair embedding to regard
each $F_i$ as a pair of faces in the top and bottom drum 
skin. We further pass to the quotient by sign flips and regard facets
as pairs of nodes in $\GC^- \times \GC$ via \eqref{eq:Gsign}. Using
the symmetry $\sigma$ of $\GC^{-1}$ we can additionally
assume that that $(F_0)_\bot$ is either $B^-$ or $X^-$ (see Proposition
\ref{prop:top_skin}). We analyse these cases separately:

\emph{Case 1: $(F_0)_\bot = B^-$.} By Theorem \ref{thm:pair} we must have
\[
F_0 = (B^-, n).
\]
Applying Theorem \ref{thm:pair} again, we conclude that either
\[
F_\ell = (n^-, X) \quad \text{or} \quad F_\ell = ( a_1^-, \sigma(B)).
\]

If $F_\ell = (n^-, X)$ then we have
\[
  \dist(F_0, F_\ell) \ge \frac{1}{2} ( \dist_{\GC}(B^-, n^-) +
\dist_{\GC}(n, X)) \stackrel{\eqref{eq:b1}}{\ge} \frac{1}{2} ( (2k +
3) +
3) = k + 3
\]
where the first inequality is a consequence of Proposition \ref{prop:pair_bound}.

If $F_\ell = ( a_1^-, \sigma(B))$ then we have
\[
  \dist(F_0, F_\ell) \ge \frac{1}{2} ( \dist_{\GC}(B^-, a_1^-) +
\dist_{\GC}(n, \sigma(B))) \stackrel{\eqref{eq:b2}}{\ge} \frac{1}{2} ( (2k +
3) +
3) = k + 3
\]
where again we use Proposition \ref{prop:pair_bound} for the first
inequality.

\emph{Case 2: $(F_0)_\bot = X^-$.} By Theorem \ref{thm:pair}, we must
have
\[
F_0 = (X^-,a_1).
\]
Applying Theorem \ref{thm:pair} again, we also conclude that either
\[
F_\ell = (n^-, X) \quad \text{or} \quad F_\ell = ( a_1^-, \sigma(B)).
\]
If $F_\ell = (n^-, X)$ then we have
\[
  \dist(F_0, F_\ell) \ge \frac{1}{2} ( \dist_{\GC}(X^-, n^-) +
\dist_{\GC}(a_1, X)) \stackrel{\eqref{eq:b3}}{\ge} \frac{1}{2} ( (2k +
3) +
3) = k + 3.
\]
If $F_\ell = ( a_1^-, \sigma(B))$ then we have
\[
  \dist(F_0, F_\ell) \ge \frac{1}{2} ( \dist_{\GC}(X^-, a_1^-) +
\dist_{\GC}(a_1, \sigma(B))) \stackrel{}{\ge}
\frac{1}{2} ( 3 + (2k +
3)) = k + 3
\]
where, for the second inequality, we have used that
\[ \dist_{\GC}(a_1, \sigma(B))) = \dist_{\GC}(n, B)) \ge 2k + 3 \]
by symmetry and \eqref{eq:b1}.

Thus we have established \eqref{eq:target_bound} in all cases, and the theorem is proved.
\end{proof}


  \end{document}